\numberwithin{equation}{section}
\newtheorem{theorem}{Theorem}[section]
\newtheorem{lemma}[theorem]{Lemma}
\newtheorem{assumption}{Assumption}[section]
\newtheorem{definition}[theorem]{Definition}
\newtheorem{coro}[theorem]{Corollary}
\newtheorem{prop}[theorem]{Proposition}
\newtheorem{remark}[theorem]{Remark}
\def\E{\mathbb{E}}
				\newcommand{\fF}{\overline{\mathfrak F}}
				\newcommand{\fS}{\overline{\mathfrak S} }
				\newcommand{\dD}{\mathbb{D}}
				\newcommand{\dE}{\mathbb{E}}
				\newcommand{\dP}{\nu}  
				\newcommand{\dR}{\mathbb{R}}
				\newcommand{\cB}{\mathcal{B}}
				\newcommand{\cC}{\mathcal{C}}
				\newcommand{\cG}{\mathcal{G}}
				\newcommand{\cH}{\mathcal{H}}
				\newcommand{\Cov}{\text{\rm Cov}}
				\newcommand{\wt}{\widetilde}
				\newcommand{\R}  {\mathbb{R}}
				\newcommand{\bC}{{\mathbf C}}
				\newcommand{\non}{\nonumber}
				\newcommand{\baa}{\begin{eqnarray*}}
					\newcommand{\eaa}{\end{eqnarray*}}
				\newcommand{\indic}[1]{\mathds{1}_{#1}}
				\newcommand{\SCA}[1]{{{\left<#1\right>}}} 
				\newcommand{\PAR}[1]{{{\left(#1\right)}}} 
				\newcommand{\SBRA}[1]{{{\left[#1\right]}}} 
				\renewcommand{\leq}{\leqslant}
				\renewcommand{\geq}{\geqslant}
				\providecommand{\keywords}[1]{\textbf{Keyswords: } #1}
				\newcommand{\rd}{\mathrm{d}}
				\newcommand{\hF}{\hat{\mathfrak {F}}}
				\newcommand{\hS}{\hat{\mathfrak{S}}}
\begin{document}

					\title[Functional Central Limit for Epidemic Model with Memory]{
					Functional Central Limit Theorem and SPDE for epidemic model with memory of the last infection and waning immunity}

					\author[Arsene Brice Zotsa Ngoufack]{Arsene Brice Zotsa Ngoufack}
					\thanks{ABZN acknowledges funding from the SCOR Foundation for Science.}
					\address{Departement de mathématiques, 
						Université du Québec à Montréal}
					\email{zotsa\_ngoufack.arsene\_brice.@uqam.ca}
					
%
					
					\date{\today}
					
					\subjclass[2020]{60F05; 60F17; 35Q92; 60K35;60G55 }
					\keywords{Stochastic epidemic model with memory; age-structured model; varying infectivity; varying immunity/susceptibility; Gaussian driven stochastic Volterra integral equation, SPDE, Poisson random measure, Measure-valued process.}
					
					\begin{abstract}We study the fluctuations of a stochastic epidemic model with memory of previous infections, varying infectivity, and waning immunity, as introduced in \cite{guerin2025stochastic}. The dynamics of the epidemic model are described by a measure-valued process with respect to infection age and individual traits.
						The Functional Law of Large Numbers (FLLN) is formulated as an integral equation, which is solved by a deterministic measure. In this article, we establish the Functional Central Limit Theorem (FCLT), capturing the fluctuations of the stochastic model around its deterministic limit. The limit of the FCLT is given by a nonlinear stochastic integral equation which is solved by a random signed-measure.
						We further derive the weak solution in the form of a stochastic partial differential equation (SPDE) and propose an alternative representation of the FCLT, as fluctuations in the average total force of infection and average susceptibility.
					\end{abstract}
					\maketitle
					\section{Introduction}
					The recent pandemic has shown the importance to continuous to improve epidemic models. Usually scientist use compartmental model to capture the dynamic of epidemic. In particular, the classical \textbf{SIRS} epidemic model, where $\textbf{S}$ represents the compartment of susceptible individuals, $\textbf{I}$ denotes infected individuals and $\textbf{R}$ corresponds to the recovered individuals, assumes that individuals who leave compartment \textbf{R} become instantly fully susceptible again\cite{britton2019stochastic,ethier2009markov}. This means that once an individual has recovered, they remains fully immunized during some period, after which they becomes immediately susceptible. In this model, we also assume that, individuals move from class \textbf{S} to  \textbf{I} at a constant rate called infectivity rate. After some period, they move to \textbf{R} and after another period, they return to \textbf{S}. In the case of stochastic epidemic model, when the duration in each compartment follows an exponential distribution, the model is said to be Markovian \cite{britton2019stochastic}. Additionally, we assume that, the population is homogeneous, meaning that, it does not take into account inhomogeneities such as spatial type or social activity.  The study of this model in large population has been extensively conducted, demonstrating that the classical ordinary differential equation (ODE) epidemic model can be seen as the limit of a Markov process. This result is known as a Functional Law of Large numbers (FLLN)\cite{britton2019stochastic,ethier2009markov}.   
					
					In \cite{pang2022functional}, Pang and Pardoux established the Functional Law of Large Numbers (FLLN) for the SIRS epidemic model, considering an arbitrary distribution of the duration of sojourn in each compartment while keeping the infectivity rate constant, making their model non-Markovian. Later, in \cite{forien_epidemic_2021}, together with Forien, they extended this result to the SIR epidemic model with a varying infectivity rate. In their paper, they assumed that the infectivity of each individual is a random function of the elapsed time since infection.
					In \cite{pang2023functional}, they proposed a similar model, but with an infectivity rate that depends on the age of infection. To study the fluctuations between the stochastic model and the results obtained from the FLLN, they also proved the Functional Central Limit Theorems (FCLT) in \cite{pang2022-CLT-functional, pang2024spde}.
					A spatial version of the model can be found in \cite{kanga2024spatial}, while a non-homogeneous model is discussed in \cite{pang2025stochastic}. Additionally, in \cite{foutel-rodier_individual-based_2020}, the authors proposed a similar model based on a structured-age branching process, known as the Crump-Mode-Jagers process \cite{crump1968general}.
					
					Recently, in \cite{forien-Zotsa2022stochastic}, Forien, Pang, Pardoux, and Zotsa-Ngoufack established a Functional Law of Large Numbers (FLLN) for a homogeneous stochastic epidemic model with varying infectivity, which take account the gradual loss of immunity. The Central Limit Theorem (FCLT) was later proved separately in \cite{ngoufack2025functional} by Zotsa-Ngoufack.
					In addition to the assumption that the infectivity of each individual is a random function, in their model they also assume that, the susceptibility of each individual is a random function of the elapsed time since infection. In their framework, at each new infection, a new pair of càdlàg random functions, independent of the previous ones, is drawn to define the infectivity and susceptibility of the individual. This implies that they did not consider the memory on the previous infection.
					Their model generalizes the deterministic model of Kermack-McKendrick \cite{KMK,kermack_contributions_1932,kermack_contributions_1933} and demonstrates that the Kermack-McKendrick model can be viewed as the limit of a stochastic model. The proof of their FCLT was particularly challenging due to the interactions and memory effects present in the system\cite{ngoufack2025functional}. In \cite{foutel2025optimal}, the authors extended the model from \cite{forien-Zotsa2022stochastic} by incorporating vaccination policies.
					The model in \cite{forien-Zotsa2022stochastic} was non-Markovian and posed significant analytical difficulties. 
					
					In \cite{guerin2025stochastic}, Guérin and Zotsa-Ngoufack extended the model in \cite{forien-Zotsa2022stochastic}. They introduced an age structure and incorporated memory from previous infections into the stochastic model. In their formulation, the infectivity and susceptibility functions are parametric, where a parameter referred to as the trait describes the characteristics of the individual. Given a specific trait, the infectivity and susceptibility functions become deterministic for that individual.
					
					By incorporating an age structure into the epidemic model, the system becomes Markovian and more analytically tractable, benefiting from the existing literature on measure-valued processes. Specifically, for the FLLN, the authors established convergence results for the empirical measure of infection age and individual traits. The age and trait process are described by a piecewise deterministic Markov process (PDMP). To incorporate memory from previous infections, the authors used a transition kernel defined over the space of individual traits.
					
					In this article, we establish a FCLT for the model introduced in \cite{guerin2025stochastic}. In contrast to \cite{ngoufack2025functional} we use Sobolev spaces to tackle the tightness of the fluctuation of the empirical measure around its deterministic limit. We also see that adding the structure in the model, it becomes more easier to analyze the interaction between individuals compared to the approach in \cite{ngoufack2025functional}. However we still introduce a process which counts the number of times of (re-)infection of each individual to tackle the interactions. 
					Unlike in \cite[Assumption~$2.4-2.6$]{ngoufack2025functional}, we do not impose additional assumption on the infectivity and susceptibility functions. Instead, we assume that, the initial age of infection has a bounded moment (Assumption~\ref{VVM-CLT-ass-a_0}). 
					For tightness we apply Aldou's criterion for Hilbert space (see Definition~\ref{Def-tight}). We then deduce the proof of the FCLT by establishing that the limit of any subsequence do not depend of the subsequence and we conclude by continuous mapping theorem.  The limit of this FCLT is given by a  signed-measured which is a solution of a stochastic integral equation (Theorem~\ref{CLT-formulation-VVM}).
					If we assume that the signed-measure has a density, we derive from the integral equation that the weak solution is given by a SPDE with Gaussian noise (Proposition~\ref{SPDE-sec-prop}). Additionally, we obtain an alternative expression of this solution given by a system of a stochastic Volterra equation driven by a two dimensional Gaussian process with a well known covariance function (Proposition~\ref{eq-equiv-spde}) and we deduce that, this result is similar to the result obtained in \cite{ngoufack2025functional} when we remove the memory (taking transition kernel equal to one).  
					
					The proof of this FCLT closely follows the approach used in \cite{chevallier_fluctuations_2017,tran2006modeles}. However, we obtain a more general regularity result in Sobolev space compared to \cite{tran2006modeles}. In comparaison to \cite{chevallier_fluctuations_2017}, we obtain the same regularity in Sobolev spaces without assuming bounded initial ages. In fact, in \cite{chevallier_fluctuations_2017} the author assumes that the initial age is bounded, an assumption that is not realistic for epidemic model, as it would prevent to derive a classical epidemic model based on ODEs. In contrast to \cite{chevallier_fluctuations_2017} we use a conditional moment inequality to handle cases where the initial age is not bounded (Proposition~\ref{VVM-CLT-prop-inq}). Furthermore, the model in \cite{chevallier_fluctuations_2017} is not an epidemic model; it incorporates only age-structure without traits which are essential in our framework. 
					The model introduced in \cite{tran2006modeles} differs from the one considered in this article, as it describes a reproduction model where the birth rate is independent of the state of other individuals and the birth process is independent of the death process. On the other hand, we do not assume that the equivalent to the birth rate is differentiable(see \cite{ngoufack2024stochastic} for more details).

					\subsection*{Organization of the chapter}
					The rest of the paper is organized as follows. In Section~\ref{sec-model-v}, we describe the model in Section~\ref{sec-desc-model} and we recall the FLLN from \cite{guerin2025stochastic} in Section~\ref{sec-FLLN-v}. Next, in Section~\ref{Main-result} we present the main results of this article. The proof of our main result is presented in Section~\ref{CLT-proof}. More precisely, in Section~\ref{tight-secsub} we establish tightness, next in Section~\ref{sec-ch-limits} we characterize the limit of subsequence and in Section~\ref{CLT-Main-r} we establish our main result. 
					\paragraph{\bf Notations} We denote by $\dD(\dR_+,E)$ the Skorohod space of càdlàg functions with values in a space $E$.
					For a measured space $\PAR{E,\cG,\mu}$,  $L^1(\mu)$ is the set of integrable functions with respect to the measure $\mu$, and more generally $L^p(\mu)$ with $p\in[1,\infty]$ is the Lebesgue space with respect to the measure $\mu$. For any measurable function $f$, non-negative or in $L^1(\mu)$, we denote $\SCA{\mu,f}=\int f\rd \mu$. For a non-negative or integrable with respect to $\nu$ function $f$ defined on $\PAR{\dR_+\times\Theta,\cB(\dR_+)\otimes\cH}$, we define $\dE_\nu\SBRA{f(a)}=\int_\Theta f(a,\theta)\nu(\rd\theta)$ for $a\in\dR_+$.
					
					\section{Model and Results}\label{sec-model-v}
					\subsection{Model description}\label{sec-desc-model}
					We introduce a probability space $\PAR{\Theta,\cH,\nu}$, with $\Theta\subset \dR^d$ and $d\geq 1$. Let $(\lambda(\cdot,\theta), \gamma(\cdot,\theta))_{\theta\in\Theta}$ a family of deterministic non-negative functions defined on $\mathbb{R}^+$, where $\lambda(a,\theta)$ and $\gamma(a,\theta)$ respectively model the infectivity and the susceptibility at age $a$ of an individual with parameter $\theta$. 
					
					As in \cite{guerin2025stochastic} we assume that $(\lambda,\gamma)$ satisfy the following assumption. 
					\begin{assumption}\label{V-As-1} We assume that:
						\begin{enumerate}
							\item  There exists $\lambda_*>0$ such that for any $(a,\theta)\in\R_+\times\Theta$, $0\leq\lambda(a,\theta)\leq\lambda_*.$ 
							\item for any $(a,\theta)\in\R_+\times\Theta$, $0\leq\gamma(a,\theta)\leq1$.
						\end{enumerate}
					\end{assumption}

					We consider a population of fixed size $N$. For $k\in\{1,\ldots, N\}$, we denote by $(a_k^N(t))_{t\geq 0}$ the age process and by $\theta_k^N(t)$ the trait of the $k$-th individual at time $t$. We assume that $(a_0^k,\theta_0^k)_{1\leq k\leq N}$ are i.i.d random variables with distribution $\mu_0$ on $\dR^+\times\Theta$ modeling the initial age and parameter of each individual.

					Each time an individual is infected, its age jumps to $0$ and a new parameter is randomly chosen. The ages and parameters of the other individuals are not impacted. Between two infections, the ages of all the individuals in the population increase linearly and their parameters remain constant. 
					
					We introduce a family of independent Poisson random measures $\PAR{Q_k}_{k\geq1}$  on $\mathbb{R}^+\times\Theta\times\mathbb{R}^+$ with intensity $\rd z\dP(\rd \theta)\rd t$. 
					
					As in \cite{guerin2025stochastic}, we also introduce  a memory kernel $K:\Theta\times\Theta\to\R_+,$ satisfying the following.
					\begin{assumption}\label{V-As-2} $K$ is a nonnegative measurable function on $\Theta\times\Theta$ such that for any $\theta\in\Theta\subset\mathbb{R}^d,$
						\begin{equation}
							\int_\Theta K(\theta,\widetilde\theta)\dP(\rd \widetilde\theta)=1.
						\end{equation}
					\end{assumption}
					
					Then the family $(a^N_k,\theta^N_k)_{1\leq k\leq N}$ can be described as the solution of the following system of stochastic differential equations,
					\begin{align}\label{eq:SDE}
						\begin{cases}
							a^N_k(t)&=\displaystyle{a_0^k+t-\int_{0}^{t}\int_{\Theta}\int_{0}^{\infty}a^N_k(s^-)\mathds{1}_{\fF^N(s^-)\gamma^N_k(s^-)K(\theta^N_k(s^-),\widetilde\theta)\geq z}Q_k\PAR{\rd z,\rd\widetilde\theta,\rd s}}\\[0.3cm]
							\theta^N_k(t)&=\displaystyle{\theta_0^k+\int_{0}^{t}\int_{\Theta}\int_{0}^{\infty}\left(\widetilde\theta-\theta^N_k(s^-)\right)\mathds{1}_{\fF^N(s^-)\gamma^N_k(s^-)K(\theta^N_k(s^-),\widetilde\theta)\geq z}Q_k\PAR{\rd z,\rd\widetilde\theta,\rd s}}\\[0.3cm]
							\gamma^N_k(t)&=\gamma(a^N_k(t),\theta^N_k(t)),
						\end{cases}
					\end{align}
					where the force of infection in the population is given by
					\begin{equation}
						\fF^N(t)=\frac{1}{N}\sum_{k=1}^{N}\lambda\PAR{a^N_k(t),\theta^N_k(t)}.
					\end{equation}
We introduce the empirical measure $\mu ^N_t$ of ages and traits at time $t\geq 0$, defined by
\begin{equation}\label{H-mu}
	\mu ^N_t=\frac{1}{N}\sum_{k=1}^{N}\delta_{(a^N_k(t),\theta^N_k(t))}.
\end{equation}
We easily note that
\begin{equation*}
	\fF^N(t)=\langle\mu ^N_t,\lambda\rangle.
\end{equation*}
We observe that individuals are in interaction through the force of infection of the disease in the population. 
Indeed, the individual $k$ gets (re)-infected at time $t$ at rate $\fF^N(t)\gamma^N_k(t)$ and if it occurs, his age jumps to $0$ and he is assigned a new parameter according to the distribution $K( \theta^N_k(t^-),\widetilde\theta)\nu(\rd \widetilde\theta)$.
The family $(a_k,\theta_k)_{1\leq k\leq N}$ is a system of interacting piecewise deterministic Markov processes on the Skorohod space $\dD(\dR_+, \dR_+\times \Theta)$.

Note that we construct $(a_k^N,\theta_k^N)_{1\leq k\leq N}$ by induction on the jumps times. Assumption \ref{V-As-1}  implies that the rate of occurrence of new infection $\fF^N(t)\gamma_k^N(t)$ is bounded almost surely
by the constant $\lambda_\ast$. Consequently the jump times do not accumulate, and the above induction defines $(a_k^N(t),\theta_k^N(t)$ for all $t\geq0$.

\subsection{Some known results}\label{sec-FLLN-v}
We recall the following Functional Large Law of Numbers (FLLN) result from \cite{guerin2025stochastic}.
\begin{theorem}\label{VVM-th}
Under Assumption~\ref{V-As-1} and Assumption~\ref{V-As-2},  as $N\to\infty,\,\mu ^N$ converges to $\mu$ in distribution to a measure $\mu \in\dD(\R_+;\mathcal{P}(\R_+\times\Theta)),$ which is the unique solution of
\begin{equation}\label{VVM-eq1}
\langle\mu _t,f_t\rangle=\langle\mu _0,f_0\rangle + \int_0^t \langle\mu _s,\partial_af_s+\partial_sf_s\rangle  \rd s+\int_{0}^{t}\langle\mu _{s},\lambda\rangle\langle\mu _{s},Rf_{s}\rangle\rd s,
	\end{equation}
for any bounded function $f$ on $\R_+\times\R_+\times\Theta$, and of class $\cC^1$ with respect to the first two variables, where the operator $R$ is given by, 
\begin{equation}\label{eq:Rjump-term}
Rf(a,\theta)=\int_{\Theta}\left(f(0,\widetilde\theta)-f(a,\theta)\right)\gamma( a,\theta)K(\theta,\widetilde\theta)\dP(\rd\widetilde\theta).
\end{equation}
\end{theorem}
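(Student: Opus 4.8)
The plan is to derive the evolution equation \eqref{VVM-eq1} as the $N\to\infty$ limit of the semimartingale decomposition of $\langle \mu^N_t, f_t\rangle$, and then to establish uniqueness of the solution separately. Although this is a stated (recalled) result from \cite{guerin2025stochastic}, a self-contained sketch goes as follows.

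\textbf{Semimartingale decomposition.} First I would apply It\^o's formula for jump processes (equivalently, Dynkin's formula for the PDMP) to the function $t\mapsto \langle\mu^N_t,f_t\rangle = \frac{1}{N}\sum_{k=1}^N f_t(a^N_k(t),\theta^N_k(t))$. Between jumps, each $(a^N_k,\theta^N_k)$ flows deterministically with $\dot a^N_k = 1$, $\dot\theta^N_k = 0$, producing the drift term $\int_0^t \langle \mu^N_s, \partial_a f_s + \partial_s f_s\rangle\,\rd s$. At a jump of $Q_k$ inside the indicator region, the $k$-th summand changes from $f_s(a^N_k(s^-),\theta^N_k(s^-))$ to $f_s(0,\widetilde\theta)$; compensating the Poisson random measure $Q_k$ by its intensity $\rd z\,\nu(\rd\widetilde\theta)\,\rd s$ and integrating out the $z$-variable against the indicator $\mathds 1_{\fF^N(s^-)\gamma^N_k(s^-)K(\theta^N_k(s^-),\widetilde\theta)\geq z}$ yields a factor $\fF^N(s^-)\gamma^N_k(s^-)K(\theta^N_k(s^-),\widetilde\theta)$, and summing over $k$ and over $\widetilde\theta$ produces exactly $\int_0^t \langle\mu^N_s,\lambda\rangle\,\langle\mu^N_s, Rf_s\rangle\,\rd s$ with $R$ as in \eqref{eq:Rjump-term}, plus a martingale $M^{N,f}_t$ coming from the compensated measures $\widetilde Q_k = Q_k - \rd z\,\nu\,\rd s$.

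\textbf{Passage to the limit.} Next I would show $M^{N,f}_t \to 0$ in probability (uniformly on compacts): its predictable quadratic variation is $\frac{1}{N^2}\sum_k \int_0^t \int_\Theta (f_s(0,\widetilde\theta)-f_s(a^N_k(s^-),\theta^N_k(s^-)))^2 \fF^N(s^-)\gamma^N_k(s^-) K(\theta^N_k(s^-),\widetilde\theta)\,\nu(\rd\widetilde\theta)\,\rd s$, which is $O(1/N)$ by Assumption~\ref{V-As-1} (so $\fF^N \le \lambda_*$, $\gamma^N_k\le 1$), the normalization $\int K(\theta,\cdot)\,\nu = 1$ from Assumption~\ref{V-As-2}, and boundedness of $f$. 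Tightness of $(\mu^N)$ in $\dD(\R_+;\cP(\R_+\times\Theta))$ follows from the uniform bound on the drift and jump terms (controlling the modulus of continuity via the compensator), together with tightness of the marginals --- here the initial law $\mu_0$ is fixed, and the $\cP$-valued nature keeps masses bounded by $1$. Any limit point $\mu$ then satisfies \eqref{VVM-eq1} by passing to the limit in the decomposition, using that $\langle\mu^N_s,\lambda\rangle\langle\mu^N_s,Rf_s\rangle$ is a continuous bounded functional of the measure (bilinear in $\mu^N_s$, with $\lambda, Rf_s$ bounded continuous).

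\textbf{Uniqueness.} Finally I would argue uniqueness of the solution to \eqref{VVM-eq1}. The standard route is to test against the flow: take $f_s(a,\theta)$ solving the transport equation $\partial_s f_s + \partial_a f_s = 0$ backward from a terminal condition $g$, so the first integral drops out and $\langle\mu_t,g\rangle$ is expressed through $\langle\mu_0,\cdot\rangle$ plus a Volterra-type term in the total force of infection $\fF(s) = \langle\mu_s,\lambda\rangle$. This reduces \eqref{VVM-eq1} to a closed fixed-point equation for the scalar function $s\mapsto \fF(s)$ (and the auxiliary susceptibility-type averages), which is a Volterra integral equation with Lipschitz (indeed bounded) kernel thanks to Assumption~\ref{V-As-1}; Gronwall then gives uniqueness on $[0,T]$ for every $T$, hence $\mu^N \Rightarrow \mu$ for the full sequence. \textbf{The main obstacle} I anticipate is the uniqueness step: the quadratic (McKendrick-type) nonlinearity $\langle\mu_s,\lambda\rangle\langle\mu_s,Rf_s\rangle$ means one cannot directly Gronwall in a measure norm, so the argument must genuinely exploit the characteristics/flow representation to linearize, and care is needed because $f$ in \eqref{VVM-eq1} is only $\cC^1$ in the first two variables, so the transport solutions used as test functions must be justified as admissible (e.g.\ by mollification and a limiting argument).
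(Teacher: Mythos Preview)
The paper does not actually prove this theorem: it is stated in Section~\ref{sec-FLLN-v} as a result \emph{recalled} from \cite{guerin2025stochastic}, and no argument is given here. You correctly identify this, and your sketch is a reasonable outline of the standard route (semimartingale decomposition, vanishing martingale, tightness, identification of the limit via continuous mapping, uniqueness via characteristics and Gronwall) that one would expect the cited reference to follow.

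Two points in your sketch would require more care in a full proof. First, the continuous-mapping step assumes $\lambda$ and $Rf_s$ are continuous in $(a,\theta)$, but Assumption~\ref{V-As-1} only imposes boundedness, not continuity; one either needs to add a regularity hypothesis or argue convergence through a different route (e.g.\ propagation of chaos / coupling, as suggested by the auxiliary processes $(a_k,\theta_k)$ used later in this paper). Second, your uniqueness reduction to ``a closed fixed-point equation for the scalar function $\fF$'' is too optimistic in the presence of the memory kernel $K(\theta,\widetilde\theta)$: the system does not close on $\fF$ alone, and one must work with the full measure (or at least a $\theta$-indexed family of functionals), so the Gronwall argument lives in a larger space than you indicate. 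These are not fatal, but they are precisely the places where the proof in \cite{guerin2025stochastic} presumably does real work.
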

As noted in \cite{guerin2025stochastic}, when $\mu_0$ has a density $u_0$ with respect to measure $\rd a\dP(\rd\theta)$, the  weak solution $\mu_t$ of  \eqref{VVM-eq1} also admits a density $u_t.$ Moreover, $(u_t)_{t\geq 0}$ satisfies the following partial differential equation (PDE): for any $(a,\theta)\in\R_+\times\Theta,$
\begin{numcases}{}
\partial_t u_t(a,\theta)+\partial_a u_t(a,\theta)=-\fF(t)\gamma(a,\theta)u_t(a,\theta)\nonumber\\[0.3cm]
u(t,0,\theta)=\displaystyle{\fF(t)\int_{\R_+\times\Theta}\gamma(a,\widetilde\theta)K(\widetilde\theta,\theta)u_t(a,\widetilde\theta)\rd a\dP(\rd\widetilde{\theta})}\label{VVL-eq-1}\\[0.3cm]
u(0,a,\theta)=u_0(a,\theta)\nonumber\\[0.3cm]
\fF(t)=\displaystyle{\int_{\R_+\times\Theta}\lambda(a,\theta)u_t( a,\theta)\rd a\dP(\rd\theta).}\label{eq:def-F}
\end{numcases} 
We introduce the deterministic function 
\begin{align}
\fS (t,\theta)&:=\int_{\R_+\times\Theta}\gamma(a,\widetilde\theta)K(\widetilde\theta,\theta)u_t(a,\widetilde\theta)\rd a\dP(\rd\widetilde{\theta})\label{eq:def-S}.
\end{align}
Moreover, as mentioned again in \cite{guerin2025stochastic}, when $K(\theta,\wt\theta)=K(\wt\theta),$ Theorem~\ref{VVM-th} is identical to the FLLN established in \cite{forien-Zotsa2022stochastic} when $\mu_0$ has a density.
				
Finally from \cite{guerin2025stochastic} we have the following Remarks.
\begin{remark}
	For each $t\geq0, \fF(t)\leq \lambda_*$ and $\int_{\Theta}\fS(t,\theta)\dP(\rd\theta)\leq 1$.
\end{remark}

\begin{remark}\label{Rq-a-theta}
The solution $\mu$ of Equation~\eqref{VVM-eq1} is the distribution of  a pair of random processes $(a(t),\theta(t))_{t\geq 0}$ solution of the following stochastic differential system
\begin{align*}
\begin{cases}
\displaystyle{a(t)=a_0+t-\int_{0}^{t}\int_{\Theta}\int_{0}^{\infty}a(s^-)\mathds{1}_{\fF(s^-)\gamma(s^-)K(\theta(s^-),\widetilde\theta)\geq z}Q(\rd z,\rd \widetilde\theta,\rd s)}\\[0.3cm]
\displaystyle{\theta(t)=\theta_0+\int_{0}^{t}\int_{\Theta}\int_{0}^{\infty}\left(\widetilde\theta-\theta(s^-)\right)\mathds{1}_{\fF(s^-)\gamma(s^-)K(\theta(s^-),\widetilde\theta)\geq z}Q(\rd z,\rd\widetilde\theta,\rd s)}\\[0.3cm]
\gamma(t)=\gamma(a(t),\theta(t)),
\end{cases}
\end{align*}
where $(a_0,\theta_0)$ is a random variable with distribution $\mu_0$, $\fF$ is defined by \eqref{eq:def-F}, and $Q$ is a Poisson measure  on $\mathbb{R}_+\times\Theta\times\mathbb{R}_+$ with intensity $\rd z\dP(\rd \widetilde\theta)\rd t$. They also note that $\fF(t)=\E\left[\lambda(a(t),\theta(t))\right]$.
\end{remark}
	
Note that Theorem~\ref{VVM-th} means that in large population, the dynamic of the epidemic becomes deterministic.  		

\subsection{Main Results}\label{Main-result}	
The purpose of this section is to establish a Functional Central Limit Theorem (FCLT) for the fluctuation of the stochastic sequence around its deterministic limit. More precisely, we define the following fluctuation process:
\[\hat{\mu}^N=\sqrt{N}\left(\mu^N-\mu\right),\]
and we want to find the limiting law of $\hat{\mu}^N$.	
\begin{remark}
	 By classical central limit theorem's it follows that  $\hat{\mu}_0^N$ converges to $\hat{\mu}_0$ as $N\to\infty$ to a centered Gaussian variable such that for any regular function $\varphi$ and $\psi,$
	 \[\E \left[\langle\hat{\mu}_0,\varphi\rangle\langle\hat{\mu}_0,\psi\rangle\right]=Cov(\varphi(a_0,\theta_0),\psi(a_0,\theta_0)).\]
\end{remark}
We start to recall the following results on weighted Sobolev spaces. We use this for tightness.
\subsubsection{Weighted Sobolev spaces}We assume that $\nu$ is absolutely continuous measure with respect to the Lebesgue measure.
We introduce the following Sobolev spaces. 
For any $j\in\mathbb N$ and $\alpha\in\R_+,$ let us consider the space of all real function $g$ defined on $\R_+\times\Theta$ with partial derivative up to order $j$ such that,
\[\|g\|_{j,\alpha}=\left(\sum_{0\leq|\beta|\leq j}\int_{\R_+\times\Theta}\frac{|D^{\beta}g(a,\theta)|^2}{(1+a^{2\alpha})}\rd a\dP(\rd\theta)\right)^{\frac{1}{2}}<\infty,\]
where $|\beta|=\beta_0+\beta_1+\cdots+\beta_d$ with $\beta=(\beta_0,\beta_1,\cdots,\beta_d)$ and $D^{\beta}g(a,\theta)=\partial^{|\beta|} g/\partial a^{\beta_0}\partial\theta_1^{\beta_1}\cdots\partial\theta_d^{\beta_d}$. Let $\mathcal{W}^{j,\alpha}_0$ be the closure of the set of functions of class $\mathcal{C}^\infty$ with compact support on $\R_+\times\Theta$ for the  norm $\|\cdot\|_{j,\alpha}$. $\mathcal{W}^{j,\alpha}_0$ is a Hilbert spaces with norm $\|\cdot\|_{j,\alpha}$. We denote by $\left(\mathcal{W}^{-j,\alpha}_0,\|\cdot\|_{-j,\alpha}\right)$ its dual space.
				
Note that 
				\begin{equation*}\label{CLT-VVM-injection}
					\begin{aligned}
						&\text{\rm if }k'\geq k, \text{\rm then } \|\cdot\|_{k,\alpha}\leq \|\cdot\|_{k',\alpha} \text{\rm{ and }} \|\cdot\|_{-k',\alpha}\leq \|\cdot\|_{-k,\alpha}\\ 
						&\text{\rm if }\alpha'\geq \alpha, \text{\rm then } \mathcal{W}_0^{k,\alpha}\hookrightarrow \mathcal{W}_0^{k,\alpha'} \text{\rm and }\mathcal{W}_0^{-k,\alpha'}\hookrightarrow \mathcal{W}_0^{-k,\alpha}.
					\end{aligned}
				\end{equation*}
				Let $\mathcal{C}^{j,\alpha}$ be the space of function $g$ with continuous partial derivatives up to order $j$ and such that for all $\beta\in\mathbb{N}^{d+1},\,|\beta|\leq j,$ 
				\[\lim_{a\to\infty}\frac{|D^\beta g(a,\theta)|}{1+a^{\alpha}}=0.\]
				This space is normed with 
				\[\|g\|_{\mathcal{C}^{j,\alpha}}=\sum_{0\leq|\beta|\leq j}\sup_{(a,\theta)\in\R_+\times\Theta}\frac{|D^\beta g(a,\theta)|}{1+a^{\alpha}}.\]
				We denote by $\left(\mathcal{C}^{-j,\alpha},\|\cdot\|_{-j,\alpha}\right)$ its dual space. 
				
				We recall that $\mathcal{C}^{k}_b$ is the space of bounded function of class $\mathcal{C}^{k}$ with bounded derivatives of every order less than k. Note that $\mathcal{C}^{k}_b=\mathcal{C}^{k,0}$ as normed space and we denote by $\mathcal{C}^{-k,}_b$ its dual space. For $\alpha>\frac{1}{2}$, there exists $C>0$ such that $\|\cdot\|_{k,\alpha}\leq C\|\cdot\|_{\mathcal{C}^{k}_b}$.
				
				We recall the following Sobolev embedding (\cite[Proposition~$4.1.2$]{tran2006modeles} or \cite[Section~$2.1$]{fernandez1997hilbertian}):
				
				We set 
				\[m_d:=\left[\frac{d+1}{2}\right]+1,\]
				where $[\cdot]$ denoted the integer part. 
				\begin{itemize}
					\item $\mathcal{W}_0^{m+k,\alpha}\hookrightarrow \mathcal{C}^{k,\alpha}$ for $m\geq m_d,\,k\geq 0$ and $\alpha\in\R_+.$ i.e there exists $C>0$ such that 
					\begin{equation}\|\cdot\|_{\mathcal{C}^{k,\alpha}}\leq C \|\cdot\|_{m+k,\alpha}.\label{CLT-emb-1}\end{equation}
					\item $\mathcal{W}_0^{m+k,\alpha}\hookrightarrow_{H.S} \mathcal{W}_0^{k,\alpha+\beta}$ for $m\geq m_d,\,k\geq 0, \,\alpha\in\R_+$ and $\beta\geq m_d$ where $H.S$ means that the embedding is of Hilbert Schmidt type. Thus there exists $C>0$ such that 
					\begin{equation}\|\cdot\|_{k,\alpha+\beta}\leq C \|\cdot\|_{m+k,\alpha}.\label{CLT-emb-2}\end{equation}
				\end{itemize}
				Consequently for $m\geq m_d,\,k\geq 0,\,\beta\geq m_d,$ and $\alpha\in\R_+$ the following embedding hold:
				\begin{equation}\label{CLT-VVM-injection-1}
					\mathcal{W}_0^{-k,\alpha+\beta}\hookrightarrow \mathcal{W}_0^{-(m+k),\alpha}.
				\end{equation}
				We also have for $\alpha>\frac{1}{2},$
				\begin{equation}\label{Injection-w}
					\mathcal{W}_0^{-k,\alpha}\hookrightarrow \mathcal{C}_b^{-k}
				\end{equation}
				We refer to \cite{adams2003sobolev} for more results on Sobolev spaces. 
				\subsubsection{Main results}
				Let $\alpha>\frac{1}{2}$. We introduce the following definition.
				\begin{definition}\label{CLTM-def1}
					We set $W$ a continuous centered Gaussian process with values in $\mathcal{W}^{m_d+1,\alpha}$  with a covariance function given for all $\varphi,\,\psi\in \mathcal{W}^{m_d+1,\alpha}_0,$ by for $t,\,t'\in\R_+$,
					\begin{equation}\label{CLT-cov-w}
						\E\left[W_t(\varphi_t)W_{t'}(\psi_t)\right]=\int_0^{t\wedge t'}\langle \mu_s,\lambda\rangle \langle \mu_s,\wt R(\varphi_s,\psi_s)\rangle ds
					\end{equation}
					where 
					\[\wt R(\varphi_s,\psi_s)(a,\theta)=\int_{\Theta}\left(\varphi_s(0,\wt\theta)-\phi_s(a,\theta)\right)\left(\psi_s(0,\wt\theta)-\psi_s(a,\theta)\right)\gamma(a,\theta)K(\theta,\wt\theta)\nu(d\wt\theta)\]
				\end{definition}
				 We make the following Assumption. 
				\begin{assumption}\label{VVM-CLT-ass-a_0}
					\begin{enumerate}
						\item $\E\left[a_0^{2\alpha}\right]<\infty.$
						\item $\int_{\Theta}\sup_{\theta\in\Theta}K(\theta,\widetilde{\theta})\nu(d\widetilde{\theta})<\infty.$
					\end{enumerate}
				\end{assumption}
				Note that, the exponential distribution satisfies Assumption~\ref{VVM-CLT-ass-a_0}-$(1)$.
				
				The following theorem is our main result.
				\begin{theorem}\label{CLT-formulation-VVM} Under Assumption~\ref{V-As-1}, Assumption~\ref{V-As-2} and Assumption~\ref{VVM-CLT-ass-a_0},
					as $N\to\infty,\,\hat{\mu}^N$ converges to $\hat{\mu}$ in distribution in $D(\R_+,\mathcal{W}^{-(m_d+1),\alpha}_0),$ for any $\alpha>\frac{1}{2}$, where $\hat{\mu}$ is a continuous weak solution of 
					\begin{equation}\label{CLT-exp-VVM}
						\langle \hat{\mu}_t,\varphi_t\rangle=\langle \hat{\mu}_0,\varphi_0\rangle+\int_{0}^{t}\langle \hat{\mu}_s,\partial_s\varphi_s+\partial_a\varphi_s\rangle ds+\int_{0}^{t}\left(\langle \hat{\mu}_s,\lambda\rangle\langle \mu_s,R\varphi_s\rangle+\langle \mu_s,\lambda\rangle \langle \hat{\mu}_s,R\varphi_s\rangle\right)ds + W_t(\varphi_t),
					\end{equation}
					for any bounded measurable function $\varphi$ on $\R_+\times\R_+\times\Theta$ of class $\cC^1$ with respect to the first two variables such that $\varphi_t\in \mathcal{W}^{m_d+2,\alpha}_0$ and where $W$ is a Gaussian process with covariance function given by Definition~\ref{CLTM-def1}
				\end{theorem}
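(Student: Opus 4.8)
The plan is to follow the classical semimartingale route of \cite{chevallier_fluctuations_2017,tran2006modeles}, adapted both to the interaction carried by the force of infection and to the fact that the initial age need not be bounded.

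\textbf{Step 1 (semimartingale decomposition).} I would first apply Dynkin's formula to $t\mapsto\langle\mu^N_t,\varphi_t\rangle$ for a test function $\varphi$ of class $\cC^1$ in its first two variables with $\varphi_t\in\mathcal{W}^{m_d+2,\alpha}_0$, using the jump structure of \eqref{eq:SDE}: at a jump of individual $k$ the pair $(a^N_k,\theta^N_k)$ moves from $(a^N_k(s^-),\theta^N_k(s^-))$ to $(0,\wt\theta)$ with intensity $\langle\mu^N_{s},\lambda\rangle\,\gamma(a^N_k(s^-),\theta^N_k(s^-))K(\theta^N_k(s^-),\wt\theta)\,\nu(\rd\wt\theta)\,\rd s$. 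This yields
\[
\langle\mu^N_t,\varphi_t\rangle=\langle\mu^N_0,\varphi_0\rangle+\int_0^t\langle\mu^N_s,\partial_s\varphi_s+\partial_a\varphi_s\rangle\,\rd s+\int_0^t\langle\mu^N_s,\lambda\rangle\langle\mu^N_s,R\varphi_s\rangle\,\rd s+M^N_t(\varphi),
\]
with $M^N(\varphi)$ a càdlàg martingale of jump size $O(1/N)$ and predictable bracket $\langle M^N(\varphi)\rangle_t=\tfrac1N\int_0^t\langle\mu^N_s,\lambda\rangle\,\langle\mu^N_s,\wt R(\varphi_s,\varphi_s)\rangle\,\rd s$. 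Subtracting \eqref{VVM-eq1}, multiplying by $\sqrt N$ and linearising through $ab-cd=(a-c)b+c(b-d)$, I obtain that $\hat\mu^N$ solves \eqref{CLT-exp-VVM} exactly, with $W_t(\varphi_t)$ replaced by $\widehat M^N_t(\varphi):=\sqrt N\,M^N_t(\varphi)$ (whose bracket is $\int_0^t\langle\mu^N_s,\lambda\rangle\langle\mu^N_s,\wt R(\varphi_s,\varphi_s)\rangle\rd s$, which by Theorem~\ref{VVM-th} converges to the covariance \eqref{CLT-cov-w}) and with $\langle\mu^N_s,R\varphi_s\rangle$ in place of $\langle\mu_s,R\varphi_s\rangle$ in front of $\langle\hat\mu^N_s,\lambda\rangle$; this last discrepancy will be negligible in the limit.

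\textbf{Step 2 (uniform moments and tightness).} The core estimate is $\sup_N\E\big[\sup_{t\leq T}\|\hat\mu^N_t\|_{-(m_d+1),\alpha}^2\big]<\infty$ for each $T>0$. I would test the decomposition of Step 1 against an orthonormal basis of $\mathcal{W}^{m_d+1,\alpha}_0$, handle the martingale contribution by Burkholder--Davis--Gundy and the bound on $\langle\widehat M^N(\varphi)\rangle$, and note that the transport term $\langle\hat\mu^N_s,\partial_a\varphi_s\rangle$ consumes precisely the extra derivative separating $\mathcal{W}^{m_d+2,\alpha}_0$ from $\mathcal{W}^{m_d+1,\alpha}_0$. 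The weight $(1+a^{2\alpha})^{-1}$ built into the norms, as well as the factor $\langle\hat\mu^N_s,\lambda\rangle$, is controlled using Assumption~\ref{VVM-CLT-ass-a_0}-(1) through the conditional moment inequality of Proposition~\ref{VVM-CLT-prop-inq}, which is exactly what replaces the boundedness of initial ages required in \cite{chevallier_fluctuations_2017}; Assumption~\ref{VVM-CLT-ass-a_0}-(2) bounds the operators $R$ and $\wt R$ uniformly. A Gronwall argument then closes the bound (and, applied to $\hat\mu^N_0$ via the classical central limit theorem, gives $\hat\mu^N_0\Rightarrow\hat\mu_0$ in the relevant negative Sobolev space). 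Given this, I would verify the Aldous--Rebolledo criterion in the Hilbert space $\mathcal{W}^{-(m_d+1),\alpha}_0$ (Definition~\ref{Def-tight}): over short time intervals, the oscillation of the finite-variation part is controlled by the same estimates and that of $\widehat M^N$ by its bracket, uniformly in $N$ and over stopping times; since balls of $\mathcal{W}^{-(m_d+1),\alpha}_0$ are not compact, this is combined with the Hilbert--Schmidt embeddings \eqref{CLT-emb-2}--\eqref{CLT-VVM-injection-1} to obtain relative compactness. This yields tightness of $(\hat\mu^N)$ in $\dD(\dR_+,\mathcal{W}^{-(m_d+1),\alpha}_0)$.

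\textbf{Step 3 (identification of limits and conclusion).} Along any convergent subsequence $\hat\mu^{N}\Rightarrow\hat\mu$, jointly with $\mu^N\to\mu$ (Theorem~\ref{VVM-th}), I would pass to the limit in the equation of Step 1. The drift terms converge by continuity of the bilinear pairings together with the uniform integrability from Step 2; in particular $\langle\mu^N_s,R\varphi_s\rangle\to\langle\mu_s,R\varphi_s\rangle$, so the discrepancy noted in Step 1 disappears. Since the jumps of $\langle\hat\mu^N,\varphi\rangle$ are $O(1/\sqrt N)$, the limit $\hat\mu$ has continuous paths and $\widehat M^N(\varphi)$ converges to a continuous Gaussian martingale with bracket $\int_0^t\langle\mu_s,\lambda\rangle\langle\mu_s,\wt R(\varphi_s,\varphi_s)\rangle\rd s$, which one identifies with $W_t(\varphi_t)$ for the process of Definition~\ref{CLTM-def1}. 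Hence $\hat\mu$ is a continuous weak solution of \eqref{CLT-exp-VVM}. Finally, \eqref{CLT-exp-VVM} is linear in $\hat\mu$, so that, given $\hat\mu_0$ and $W$, its solution in $\dD(\dR_+,\mathcal{W}^{-(m_d+1),\alpha}_0)$ is unique --- either by a Gronwall estimate on $\|\hat\mu_t\|_{-(m_d+1),\alpha}$, or by solving the associated backward transport equation for $\varphi$ along characteristics so as to represent $\langle\hat\mu_t,\varphi\rangle$ explicitly. Therefore every subsequential limit coincides, the whole sequence $\hat\mu^N$ converges, and the continuous mapping theorem concludes. I expect Step 2 to be the main obstacle: producing the uniform moment bound and the Aldous estimates in the correct weighted negative Sobolev space without assuming bounded initial ages, which is precisely where Proposition~\ref{VVM-CLT-prop-inq} and the careful bookkeeping of the indices $(m_d,\alpha,\beta)$ in the Sobolev embeddings \eqref{CLT-emb-1}--\eqref{CLT-VVM-injection-1} come in.
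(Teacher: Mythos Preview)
Your proposal is correct and follows essentially the same route as the paper: the semimartingale decomposition of $\hat\mu^N$, the coupling with an i.i.d.\ system and the conditional moment inequality (Proposition~\ref{VVM-CLT-prop-inq}) to control both the interaction and the unbounded initial ages, Aldous tightness in $\mathcal{W}^{-(m_d+1),\alpha}_0$, identification of the martingale limit as the Gaussian $W$ via its bracket, and uniqueness of \eqref{CLT-exp-VVM} by Gronwall after passing to characteristics. The one nuance worth flagging is that the paper does \emph{not} close the moment estimate in Step~2 by Gronwall on the decomposition (this cannot close, since $L^*$ loses a derivative): it first proves $\sup_N\sup_{t\le T}\E\big[\|\hat\mu^N_t\|_{-m_d,\alpha}^2\big]<\infty$ \emph{directly} by splitting $\langle\hat\mu^N_t,\varphi_k\rangle$ into an i.i.d.\ part and an interacting-minus-i.i.d.\ part controlled by Proposition~\ref{VVM-CLT-prop-inq}, and only then feeds this into the decomposition (using $\|L^*\phi\|_{-(m_d+1),\alpha}\le\|\phi\|_{-m_d,\alpha}$) to obtain the sup-in-time bound in $\mathcal{W}^{-(m_d+1),\alpha}_0$.
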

				We refer to  Section~\ref{CLT-proof} for the proof.
				
				\subsubsection{SPDE}
					We assume that $\hat\mu_0(da,d\theta)=\hat u_0(a,\theta)da\nu(d\theta)$. We define the process $(\hat{u}_t)_{t\geq0}$ belongs to $\mathcal{W}_0^{-(m_d+1),\alpha},$ as the solution of the following SPDE: for any $(a,\theta)\in\R_+\times\Theta,$
				\begin{numcases}{}
					\partial_t \hat u_t(a,\theta)+\partial_a \hat u_t(a,\theta)=-\hF(t)\gamma(a,\theta)u_t(a,\theta)-\fF(t)\gamma(a,\theta)\hat u_t(a,\theta)-\sqrt{\fF(t)\gamma(a,\theta)u_t(a,\theta)}\zeta_t(a,\theta)\nonumber\\[0.3cm]
					\hat u(t,0,\theta)=\int_{\R_+\times\Theta}\left[\left(\hF(t)u_t(a,\widetilde\theta)+\fF(t)\hat{u}_t(a,\widetilde\theta)\right)\gamma(a,\widetilde\theta)\right.\non\\\hspace{7cm}\left.+\sqrt{\fF(t)\gamma(a,\wt\theta)u_t(a,\wt\theta)}\zeta_t(a,\wt\theta)\right]K(\widetilde\theta,\theta)\rd a\dP(\rd\widetilde{\theta})\label{CLT-pde}\\[0.3cm]
					\hat{u}(0,a,\theta)=\hat{u}_0(a,\theta)\nonumber\\[0.3cm]
					\hF(t)=\displaystyle{\int_{\R_+\times\Theta}\lambda(a,\theta)\hat{u}_t( a,\theta)\rd a\dP(\rd\theta),}\non
				\end{numcases}
				where $\zeta_t$ is a Gaussian space-time white noise (see \cite{nualart2006malliavin} for its properties).  
				Note that $\zeta_t\in\mathcal{W}_0^{-(m_d+1),\alpha}$ and for every $\varphi\in\mathcal{W}_0^{m_d+1,\alpha},$
				\begin{equation*}W_t(\varphi)\stackrel{(d)}{=}\int_{0}^{t}\sqrt{\langle \mu_s,\lambda\rangle}\int_{\R_+\times\Theta} \zeta_s(a,\theta)q_s\varphi(a,\theta)\rd a\dP(\rd\theta) ds,\end{equation*}
				where 
				\[q_s\varphi(a,\theta)=\sqrt{\gamma(a,\theta)u_s(a,\theta)}\int_{\Theta}\left(\varphi(0,\wt\theta)-\varphi(a,\theta)\right)K(\theta,\wt\theta)\nu(d\wt\theta).\]
				The following Proposition characterizes the solution of signed-measure $\hat{\mu}$ when it has a density. 
				 \begin{prop}\label{SPDE-sec-prop}Given $\hat{u}$ solution of the system of equation  \eqref{CLT-pde}, for all $\varphi\in\mathcal{W}^{m_d+2,\alpha}_0,$
				 	\begin{equation}\label{eq-hu}
				 		\langle \hat{u}_t,\varphi\rangle=\langle \hat{u}_0,\varphi\rangle+\int_{0}^{t}\langle \hat{u}_t,\partial_a\varphi\rangle ds+\int_{0}^{t}\left(\langle \hat{u}_s,\lambda\rangle\langle u_s,R\varphi\rangle+\langle u_s,\lambda\rangle \langle \hat{u}_s,R\varphi\rangle\right)ds+W_t(\varphi).
				 	\end{equation}
				 	Conversely given $\hat{\mu}_t(da,d\theta)=\hat{u}_t(a,\theta)da\nu(d\theta),$ solution of equation~\eqref{CLT-exp-VVM}, $\hat{u}$ is the unique solution in law of equation~\eqref{CLT-pde}. 
				 \end{prop}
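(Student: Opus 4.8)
The plan is to prove the two implications by moving between the strong (PDE) form \eqref{CLT-pde} and the weak (tested) form \eqref{eq-hu}, keeping in mind that $\hat u_t$ only lives in $\mathcal{W}^{-(m_d+1),\alpha}_0$, so every operation on $\hat u_t$ must ultimately be read against a test function $\varphi\in\mathcal{W}^{m_d+2,\alpha}_0$.

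\textbf{Direct implication.} Given $\hat u$ solving \eqref{CLT-pde} with space--time white noise $\zeta$, I would pair the first line of \eqref{CLT-pde} with $\varphi$ against $\rd a\,\nu(\rd\theta)$ and integrate on $[0,t]$. The transport term is treated by integration by parts in the age variable,
\[-\int_{\R_+}\partial_a\hat u_t(a,\theta)\,\varphi(a,\theta)\,\rd a=\hat u_t(0,\theta)\,\varphi(0,\theta)+\int_{\R_+}\hat u_t(a,\theta)\,\partial_a\varphi(a,\theta)\,\rd a,\]
the boundary term at $a=\infty$ vanishing because $\varphi$ and $\partial_a\varphi$ decay like $1+a^{\alpha}$ while $\hat u_t\in\mathcal{W}^{-(m_d+1),\alpha}_0$. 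Integrating the remaining boundary term in $\theta$, substituting the prescribed value of $\hat u_t(0,\theta)$ from the second line of \eqref{CLT-pde}, then exchanging the order of integration, relabelling $\theta\leftrightarrow\widetilde\theta$, and using the normalisation $\int_\Theta K(\theta,\widetilde\theta)\,\nu(\rd\widetilde\theta)=1$ of Assumption~\ref{V-As-2}, the boundary contribution merges with the reaction terms $-\hF(t)\gamma u_t-\fF(t)\gamma\hat u_t$ into
\[\int_{\R_+\times\Theta}\Bigl(\hF(t)u_t(a,\theta)+\fF(t)\hat u_t(a,\theta)\Bigr)R\varphi(a,\theta)\,\rd a\,\nu(\rd\theta),\]
with $R$ the operator of \eqref{eq:Rjump-term}; since $\hF(t)=\langle\hat u_t,\lambda\rangle$ and $\fF(t)=\langle u_t,\lambda\rangle$ this equals $\langle\hat u_t,\lambda\rangle\langle u_t,R\varphi\rangle+\langle u_t,\lambda\rangle\langle\hat u_t,R\varphi\rangle$. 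The noise term $-\sqrt{\fF(t)\gamma u_t}\,\zeta_t$ and its boundary image combine, again via $\int_\Theta K(\theta,\widetilde\theta)\,\nu(\rd\widetilde\theta)=1$, into $\sqrt{\langle\mu_t,\lambda\rangle}\int_{\R_+\times\Theta}\zeta_t(a,\theta)\,q_t\varphi(a,\theta)\,\rd a\,\nu(\rd\theta)$ with $q_t$ the operator of the statement; integrating on $[0,t]$ identifies it with $W_t(\varphi)$ through the representation recalled just before the Proposition (so that $W$ carries the covariance \eqref{CLT-cov-w}). Collecting the pieces and integrating in time yields \eqref{eq-hu}.

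\textbf{Converse implication.} Let $\hat\mu_t(\rd a,\rd\theta)=\hat u_t(a,\theta)\,\rd a\,\nu(\rd\theta)$ solve \eqref{CLT-exp-VVM}. Specialising \eqref{CLT-exp-VVM} to test functions $\varphi$ independent of time ($\partial_s\varphi\equiv0$) gives precisely \eqref{eq-hu}. I would then reverse the chain of identities above: each step there is an equality of two functionals of $\varphi$ over the dense class $\mathcal{C}^\infty_c(\R_+\times\Theta)$, so choosing $\varphi$ supported in $(0,\infty)\times\Theta$ isolates the interior equation (transport, the two reaction terms and the noise), while the surviving identity, valid for every $\varphi$, forces the trace of $\hat u_t$ at $a=0$ to equal the right-hand side of the second line of \eqref{CLT-pde}. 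The only non-algebraic ingredient is the recovery of the driving noise: the representation of $W$ recalled before the Proposition, read in reverse, furnishes --- after enlarging the probability space if necessary --- a space--time white noise $\zeta$ with $W_t(\varphi)=\int_0^t\sqrt{\langle\mu_s,\lambda\rangle}\int_{\R_+\times\Theta}\zeta_s q_s\varphi\,\rd a\,\nu(\rd\theta)\,\rd s$; this is the sole reason the conclusion is ``in law''. Uniqueness in law then follows because \eqref{eq-hu} is an affine equation in $\hat u$ driven by the fixed Gaussian $W$: testing the difference of two solutions against a countable orthonormal basis of $\mathcal{W}^{m_d+2,\alpha}_0$ and applying Gronwall's lemma with the bounds $\lambda\leq\lambda_*$, $\gamma\leq1$ and $\int_\Theta\sup_{\theta}K(\theta,\widetilde\theta)\,\nu(\rd\widetilde\theta)<\infty$ shows it vanishes; equivalently, \eqref{CLT-exp-VVM} has a unique solution (Theorem~\ref{CLT-formulation-VVM}) and the density $\hat u_t$ is determined $\rd a\,\nu(\rd\theta)$-a.e. by $\hat\mu_t$.

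\textbf{Main obstacle.} The delicate point is not the algebra but giving a rigorous meaning to the trace $\hat u_t(0,\cdot)$ and to the decay $\hat u_t(a,\cdot)\to0$ as $a\to\infty$ when $\hat u_t$ is merely distribution-valued. The way around this is to adopt \eqref{CLT-pde} as shorthand for the weak formulation \eqref{eq-hu} from the outset --- which is exactly what ``weak solution'' means here --- so that the content of the Proposition is the passage between \eqref{eq-hu} and the family of scalar identities obtained by letting $\varphi$ range over $\mathcal{C}^\infty_c(\R_+\times\Theta)$, with the integration by parts performed only on the smooth $\varphi$. A second, routine, point is the absolute convergence of the iterated integrals after the Fubini step, which is secured by Assumptions~\ref{V-As-1}--\ref{VVM-CLT-ass-a_0} exactly as in the proof of Theorem~\ref{CLT-formulation-VVM}.
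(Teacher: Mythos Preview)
Your proposal is correct and follows the same approach as the paper, which simply remarks that the direct implication ``follows by integration'' and that for the converse one ``easily derives'' the SPDE from \eqref{eq-hu}. You have spelled out in detail the integration-by-parts computation and the handling of the boundary and noise terms that the paper leaves implicit, including the subtleties about the trace at $a=0$ and the white-noise representation of $W$; these elaborations are consistent with what the paper intends.
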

				 Note that when $\hat{\mu}_t(da,d\theta)=\hat{u}_t(a,\theta)da\nu(d\theta),$ is a solution of equation~\eqref{CLT-exp-VVM}, $\hat{u}$ satisfies equation \eqref{eq-hu}, and we easily derive that $\hat{u}$ is a solution of equation~\eqref{CLT-pde}.
				 The sufficient condition  follows by integration.
				\subsubsection{Link with literature}
				In this section we show that the FCLT describes in Theorem~\ref{CLT-formulation-VVM} corresponds to the result obtained in \cite{ngoufack2025functional}. We recall that when there is no memory ($K(\cdot,\tilde{\theta})=K(\tilde{\theta}$)), the model describes here corresponds to the model introduced in \cite{forien-Zotsa2022stochastic}.  
				
				 We set
				 \[\hS(t,\theta)=\int_{\R_+\times\Theta}\gamma(a,\wt\theta)K(\widetilde\theta,\theta)\hat{u}_t(a,\wt\theta)\rd a\dP(\rd\widetilde{\theta}),\]
				 the fluctuation of the average susceptibility of the population.
				 
				We introduce the following Gaussian process. 
				\begin{definition} We set $(\mathfrak{M}_{0,1},\mathfrak{M}_{0,2},\mathfrak{M}_1,\mathfrak{M}_2)$ a centered Gaussian process with covariance function: for $t,t'\geq0$ and $\varphi,\psi$ two non-negative measurable functions by,
					\begin{enumerate}
						\item
						\begin{multline*}
						\Cov\left(\mathfrak{M}_{0,1}(\varphi)(t),\mathfrak{M}_{0,1}(\psi)(t')\right)=\\\Cov\left(\varphi(a_0+t,\theta_0)\exp\left(-\int_{0}^{t}\fF(s)\gamma(a_0+s,\theta_0)\rd s\right),\psi(a_0+t',\theta_0)\exp\left(-\int_{0}^{t'}\fF(s)\gamma(a_0+s,\theta_0)\rd s\right)\right)
						\end{multline*}
						
						\item 
						\begin{multline*}
							\Cov\left(\mathfrak{M}_{0,2}(\varphi)(t),\mathfrak{M}_{0,2}(\psi)(t')\right)\\
							=\int_{\R_+\times\Theta}\varphi(a+t,\theta)\psi(a+t',\theta) u_0(a,\theta)\exp\left(-\int_{0}^{t\vee t'}\fF(r)\gamma(r+a,\theta)\rd r\right)\rd a\dP(d\theta)\\-\int_{\R_+\times\Theta}\varphi(a+t,\theta)\psi(a+t',\theta)u_0(a,\theta)\exp\left(-\int_{0}^{t}\fF(r)\gamma(r+a,\theta)\rd r-\int_{0}^{t'}\fF(r)\gamma(r+a,\theta)\rd r\right)\rd a\dP(d\theta).
						\end{multline*}
						
						\item 
						\begin{multline*}\Cov\left(\mathfrak{M}_{1}(\varphi)(t),\mathfrak{M}_{1}(\psi)(t')\right)=\\\int_{\Theta}\int_{0}^{t\wedge t'}\varphi(t-s,\theta)\psi(t'-s,\theta)\exp\left(-\int_{s}^{t}\fF(r)\gamma(r-s,\theta)\rd r-\int_{s}^{t'}\fF(r)\gamma(r-s,\theta)\rd r\right)\fS(s,\theta)\fF(s)ds\dP(d\theta).\end{multline*}
						\item 
						\begin{multline*}
							\Cov\left(\mathfrak{M}_{2}(\varphi)(t),\mathfrak{M}_{2}(\psi)(t')\right)\\
							=\int_{\Theta}\int_{0}^{t\wedge t'}\varphi(t-a,\theta)\psi(t'-a,\theta) \exp\left(-\int_{a}^{t\vee t'}\fF(r)\gamma(r-a,\theta)\rd r\right)\fF(a)\fS(a,\theta)\rd a \dP(d\theta)\\-\int_{\Theta}\int_{0}^{t\wedge t'}\varphi(t-a,\theta)\psi(t'-a,\theta)\exp\left(-\int_{a}^{t}\fF(r)\gamma(r-a,\theta)\rd r-\int_{s}^{t'}\fF(r)\gamma(r-a,\theta)\rd r\right)\fF(a)\fS(a,\theta)\rd a\dP(d\theta).
						\end{multline*}
						\item
						\begin{multline*}
							\Cov(\mathfrak{M}_{1}(\varphi)(t),\mathfrak{M}_{2}(\psi)(t'))=\int_{\Theta}\int_{\Theta}\int_{0}^{t\wedge t'}\int_0^{s} \varphi(t-s,\theta)\psi(t'-a,\wt\theta)\gamma(s-a,\wt\theta) \fF(s)\fF(a)\fS(a,\wt\theta)
							\\\exp\left(-\int_{s}^{t}\fF(r)\gamma(r-s,\theta)\rd r-\int_{a}^{t'}\fF(r)\gamma(r-a,\wt\theta)\rd r\right)K(\wt\theta,\theta)\rd a \rd s\dP(\rd\wt\theta)\dP(\rd\theta).
						\end{multline*}
						\item $\Cov(\mathfrak{M}_{1}(\varphi)(t),\mathfrak{M}_{0,1}(\psi)(t'))=\Cov(\mathfrak{M}_{1}(\varphi)(t),\mathfrak{M}_{0,2}(\psi)(t'))=\Cov(\mathfrak{M}_{0,1}(\varphi)(t),\mathfrak{M}_{0,2}(\psi)(t'))=0,$ 
					$\Cov(\mathfrak{M}_{2}(\varphi)(t),\mathfrak{M}_{0,1}(\psi)(t'))=\Cov(\mathfrak{M}_{2}(\varphi)(t),\mathfrak{M}_{0,2}(\psi)(t'))=0.$
					\end{enumerate}
				\end{definition}
				 The expressions of $(\mathfrak{M}_{0,1},\mathfrak{M}_{0,2},\mathfrak{M}_1,\mathfrak{M}_2)$ are given in Appendix Subsection~\ref{A-sec-N}.
				 
				Using the method of characteristics, we easily derive the following expression for the solution. This proposition shows that Theorem~\ref{CLT-formulation-VVM} is equivalent to establishing the fluctuations for the average total force of infection and the average susceptibility of the population.
				
				\begin{prop}\label{eq-equiv-spde}
					Given $\hat{u}$ solution of the system of equation  \eqref{CLT-pde}, the pair $(\hF,\hS)$ given by \eqref{eq:def-F} and \eqref{eq:def-S} respectively, is the unique solution of the following system of integral equations  
						\begin{align}
						\hF(t)&=-\int_0^t\int_{\R_+\times\Theta}\lambda(a+t,\theta) \gamma(a+s,\theta)u_0(a,\theta)\exp\left(-\int_{0}^{t}\fF(r)\gamma(r+a,\theta)\rd r\right) da\dP(d\theta)\hF(s)ds\nonumber\\
						&-\int_{\Theta}\int_{0}^t\int_a^{t}\lambda(t-a,\theta) \gamma(s-a,\theta)\exp\left(-\int_{a}^{t}\fF(r)\gamma(r-a,\theta)\rd r\right)\hF(s)\fF(a)\fS(a,\theta)ds da\dP(d\theta)\nonumber\\
						&\hspace{1cm}+\int_{\Theta}\int_{0}^t\lambda(t-s,\theta)\exp\left(-\int_{s}^{t}\fF(r)\gamma(r-s,\theta)\rd r\right)\left(\hF(s)\fS(s,\theta)+\fF(s)\hS(s,\theta)\right)ds\dP(d\theta)\nonumber
						\\&\hspace{3cm}+\mathfrak{M}_{0,1}(\lambda)(t)-\mathfrak{M}_{0,2}(\lambda)(t)+\mathfrak{M}_{1}(\lambda)(t)-\mathfrak{M}_{2}(\lambda)(t)\label{VVM-CLT-F}
					\end{align}
					and
					\begin{align}
						\hS(t,\theta)&=-\int_0^t\int_{\R_+\times\Theta}\gamma(a+t,\wt\theta) \gamma(a+s,\wt\theta)u_0(a,\wt\theta)\exp\left(-\int_{0}^{t}\fF(r)\gamma(r+a,\wt\theta)\rd r\right) K(\wt\theta,\theta)da\dP(d\wt\theta)\hF(s)ds\nonumber\\
						&-\int_{\Theta}\int_{0}^t\int_a^{t}\gamma(t-a,\wt\theta) \gamma(s-a,\wt\theta)\exp\left(-\int_{a}^{t}\fF(r)\gamma(r-a,\wt\theta)\rd r\right)\hF(s)\fF(a)\fS(a,\wt\theta)K(\wt\theta,\theta)ds da\dP(d\wt\theta)\nonumber\\
						&\hspace{0.5cm}+\int_{\Theta}\int_{0}^t\gamma(t-s,\wt\theta)\exp\left(-\int_{s}^{t}\fF(r)\gamma(r-s,\wt\theta)\rd r\right)\left(\hF(s)\fS(s,\wt\theta)+\fF(s)\hS(s,\wt\theta)\right)K(\wt\theta,\theta)ds\dP(d\wt\theta)\nonumber
						\\&\hspace{3cm}+\mathfrak{M}_{0,1}(\gamma K(\cdot,\theta))(t)-\mathfrak{M}_{0,2}(\gamma K(\cdot,\theta))(t)+\mathfrak{M}_{1}(\gamma K(\cdot,\theta))(t)-\mathfrak{M}_{2}(\gamma K(\cdot,\theta))(t).\label{VVM-CLT-G}
					\end{align}
					Conversely, given $(\hF,\hS)$ solution of the system of equations \eqref{VVM-CLT-F}-\eqref{VVM-CLT-G},
					 	\begin{equation}\label{VVL-eq-CLT-2'}
					 	\hat{u}_t(a,\theta)=\begin{cases}
					 		\hat{u}_0(a-t,\theta)\exp\left(-\int_{0}^{t}\fF(s)\gamma(a-t+s,\theta)\rd s\right)\\\\\hspace{1cm}-\int_0^t F_1(s,a-t+s,\theta)\exp\left(-\int_{s}^{t}\fF(r)\gamma(r+a-t,\theta)\rd r\right)ds&\mbox{ if }a> t,\\\\
					 		\hat{u}(t-a,0,\theta)\exp\left(-\int_{t-a}^{t}\fF(s)\gamma(s-t+a,\theta)\rd s\right)\\\\\hspace{1cm}-\int_0^a F_1(s+t-a,s,\theta)\exp\left(-\int_{t+s-a}^{t}\fF(r)\gamma(r+a-t,\theta)\rd r\right)ds&\mbox{ if }a\leq t,
					 	\end{cases}
					 \end{equation}
					 where \[F_1(t,a,\theta)=\hF(t)\gamma(a,\theta)u_t(a,\theta)+\sqrt{\fF(t)\gamma(a,\theta)u_t(a,\theta)}\zeta_t(a,\theta),\]
					 is the unique solution in law of equation~\eqref{CLT-pde}.
				\end{prop}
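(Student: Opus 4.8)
\textbf{Proof strategy for Proposition~\ref{eq-equiv-spde}.}
The plan is to apply the method of characteristics to the SPDE system \eqref{CLT-pde}, exactly as one solves the deterministic PDE \eqref{VVL-eq-1}, treating the Gaussian forcing terms as inhomogeneous source terms. First I would integrate \eqref{CLT-pde} along characteristic lines $a\mapsto a+t$, splitting into the two regimes $a>t$ (where the characteristic traces back to the initial datum $\hat u_0$) and $a\leq t$ (where it traces back to the boundary value $\hat u(t-a,0,\theta)$). This yields precisely the representation \eqref{VVL-eq-CLT-2'} with the source term $F_1(t,a,\theta)=\hF(t)\gamma(a,\theta)u_t(a,\theta)+\sqrt{\fF(t)\gamma(a,\theta)u_t(a,\theta)}\zeta_t(a,\theta)$; here the term $\fF(t)\gamma(a,\theta)\hat u_t(a,\theta)$ from \eqref{CLT-pde} is absorbed into the exponential multiplicative factor $\exp(-\int \fF(r)\gamma(\cdot)\,dr)$, which is the standard integrating-factor trick for linear transport equations.

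Next I would substitute the characteristic representation \eqref{VVL-eq-CLT-2'} into the definitions \eqref{eq:def-F} of $\hF(t)=\langle\lambda,\hat u_t\rangle$ and \eqref{eq:def-S} of $\hS(t,\theta)$. For $\hF$, one writes $\int_{\R_+\times\Theta}\lambda(a,\theta)\hat u_t(a,\theta)\,da\,\nu(d\theta)$ as the sum of the $a>t$ contribution (change of variables $a\mapsto a+t$, producing the first line of \eqref{VVM-CLT-F} together with the Gaussian term $\mathfrak M_{0,1}(\lambda)(t)-\mathfrak M_{0,2}(\lambda)(t)$ coming respectively from the $\hat u_0$ contribution and from the white-noise part of the source) and the $a\leq t$ contribution (change of variables $a\mapsto t-a$, producing the second and third lines together with $\mathfrak M_1(\lambda)(t)-\mathfrak M_2(\lambda)(t)$). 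The key bookkeeping step is identifying the covariance structure of these Gaussian contributions: the part built from $\hat u_0$ gives the $\mathfrak M_{0,1},\mathfrak M_{0,2}$ covariances (reflecting the fluctuations of the initial empirical measure, whence the covariance is literally $\Cov$ of the transported test functions against $u_0$), while the part built from $\zeta$ against the boundary value $\hat u(t-a,0,\theta)$ unfolds — using the boundary condition in \eqref{CLT-pde} which itself contains a $\sqrt{\fF\gamma u}\,\zeta$ term — into an iterated integral whose intensity is $\fF(a)\fS(a,\theta)$, matching the stated covariances of $\mathfrak M_1,\mathfrak M_2$ and their cross-covariance with the memory kernel $K$. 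The computation for $\hS$ is identical with $\lambda$ replaced by $\gamma(\cdot,\wt\theta)K(\wt\theta,\theta)$ and an extra $\Theta$-integration, yielding \eqref{VVM-CLT-G}.

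For the converse direction, given a solution $(\hF,\hS)$ of the closed system \eqref{VVM-CLT-F}--\eqref{VVM-CLT-G}, I would define $\hat u_t$ by \eqref{VVL-eq-CLT-2'} and verify by direct differentiation that it solves \eqref{CLT-pde}: differentiating along characteristics recovers the transport equation and the source term, while evaluating at $a=0$ and using the fixed-point identities \eqref{VVM-CLT-F}--\eqref{VVM-CLT-G} recovers the boundary condition. Uniqueness in law follows because \eqref{VVM-CLT-F}--\eqref{VVM-CLT-G} is a linear Volterra system with bounded kernels (using Assumption~\ref{V-As-1}, $\lambda\leq\lambda_*$ and $\gamma\leq 1$, together with Assumption~\ref{VVM-CLT-ass-a_0} to control the $u_0$-integrals), so it has a unique solution for a given realization of the Gaussian process $(\mathfrak M_{0,1},\mathfrak M_{0,2},\mathfrak M_1,\mathfrak M_2)$ by a standard Grönwall/Picard argument on $[0,T]$; since the law of that Gaussian process is fixed, the law of $(\hF,\hS)$ and hence of $\hat u$ is determined.

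\textbf{Main obstacle.} The routine part is the method-of-characteristics integration; the delicate part is the covariance bookkeeping in the second paragraph — correctly tracking how the space-time white noise $\zeta$, once composed with the boundary condition (which feeds $\zeta$ back through the kernel $K$), produces the precise iterated-integral covariances of $\mathfrak M_1$, $\mathfrak M_2$ and especially the cross term $\Cov(\mathfrak M_1(\varphi)(t),\mathfrak M_2(\psi)(t'))$ with its nested $\int_0^{t\wedge t'}\int_0^s$ structure and the $K(\wt\theta,\theta)$ weight. Equivalently, one must check that $W_t(\varphi)$ as defined via $\zeta$ and $q_s$ decomposes, under the characteristic change of variables, into exactly the four $\mathfrak M$-pieces with the claimed joint law; this is where one must be careful that the $\mathfrak M_{0,\cdot}$ (initial-condition) and $\mathfrak M_\cdot$ (noise) families are independent, as recorded in item~(6) of the definition. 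I expect this verification, while conceptually straightforward, to require the most care to present cleanly, and I would relegate the explicit forms of the $\mathfrak M$'s to the appendix as the statement already indicates.
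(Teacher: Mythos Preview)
Your proposal is correct and follows exactly the approach the paper indicates: the paper states just before the proposition that the expression is obtained ``using the method of characteristics,'' and in Appendix~\ref{A-sec-N} it confirms that the explicit forms of the $\mathfrak{M}$'s come from substituting the characteristic formula \eqref{VVL-eq-CLT-2'} into $\langle \hat u_t,\varphi\rangle$ --- precisely your plan of integrating along characteristics, splitting $a>t$ versus $a\le t$, and then testing against $\lambda$ and $\gamma K(\cdot,\theta)$. Your discussion of the covariance bookkeeping is more detailed than what the paper actually writes (it gives no proof beyond these remarks), but the strategy is the same.
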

				The set of equation \eqref{VVM-CLT-F}-\eqref{VVM-CLT-G} is exactly the set of equation $(3.16)-(3.17)$ in \cite[Proposition~$3.10$]{ngoufack2025functional} when $K(\theta,\wt\theta)=K(\widetilde{\theta})$.
		\section{Proof of Central Limit Theorem}\label{CLT-proof}
			
			We easily check that for any test functions $f:\R_+\times\R_+\times\dR^d\to\R_+$, such that for all $\theta\in\Theta,\, (t,a)\mapsto f_t(a,\theta)=f(t,a,\theta)$ is a continuously differentiable function with respect to its first two variables, we have
			\begin{align}
				\langle\mu ^N_t,f_t\rangle	&=\langle\mu ^N_0,f_0\rangle+\int_0^t \langle\mu ^N_s,\partial_af_s+\partial_sf_s\rangle  \rd s +\int_{0}^{t}\langle\mu ^N_{s},\lambda\rangle\langle\mu ^N_{s},Rf_{s}\rangle\rd s
				\nonumber\\
				&+\frac{1}{N}\sum_{k=1}^N\int_{0}^{t}\int_{\Theta}\int_{0}^{\infty}\left(f_{s}(0,\widetilde\theta)-f_s(a^N_k(s^-),\theta^N_k(s^-))\right)\mathds{1}_{\fF^N(s^-)\gamma^N_k(s^-)K(\theta^N_k(s^-),\widetilde\theta)\geq z}\overline{Q}_k(\rd z,\rd\widetilde \theta,\rd s),
			\end{align}
			where $\overline{Q}_k$ is the compensated Poisson measure of $Q_k$, and $Rf$ is defined by \eqref{eq:Rjump-term}.
			Consequently $\hat{\mu}^N$ satisfies the following equation,
			\begin{equation} \label{CLT-expr-C}
				\langle \hat{\mu}_t^N,f_t\rangle=\langle \hat{\mu}_0^N,f_0\rangle+\int_{0}^{t}\langle \hat{\mu}_s^N,L(f_s)\rangle ds+\int_{0}^{t}V^N_s(f_s)ds + W^N_t(f_t),
			\end{equation} 
			where $L(f_t)(a,\theta)=\partial_t f_t(a,\theta)+\partial_af_t(a,\theta)$, and
			$V^N_t(f_t)=\langle\hat{\mu}^N_t,\lambda\rangle\langle\mu^N_t, R f_t\rangle+\langle\mu_t,\lambda\rangle\langle\hat\mu^N_t, R f_t\rangle$, with
			\[R f_t(a,\theta)=\int_{\Theta}\left(f_t(0,\wt\theta)-f_t(a,\theta)\right)\gamma(a,\theta)K(\theta,\wt\theta)\nu(d\wt\theta)\]
			and 
			\begin{equation}\label{Mart-W}
			W^N_t(f_t)=\frac{1}{\sqrt{N}}\sum_{k=1}^{N}\int_0^t\int_{\Theta}\int_{0}^{+\infty}\left(f_s(0,\wt\theta)-f_s(a^N_k(s^-),\theta^N_k(s^-))\right)\indic{\fF^N(s^-)\gamma_k^N(s^-)K(\theta^N_k(s^-),\wt\theta)\geq z}\overline{Q}_k(ds,d\wt\theta,dz)\end{equation}
			The quadratic variation of $W^N_t(f_t)$ is given by
			\begin{equation}\ll W^N\gg_t(f_t)=\int_{0}^{t}\langle \mu_s^N,\lambda\rangle\langle \mu_s^N,R^{(2)}f_s\rangle ds\label{CLT-eq-W_q}\end{equation}
			where
			\[R^{(2)}f_t(a,\theta)=\int_{\Theta}\left(f_t(0,\wt\theta)-f_t(a,\theta)\right)^2\gamma(a,\theta)K(\theta,\wt\theta)\nu(d\wt\theta).\]

%
		To prove the convergence of Theorem~\ref{CLT-formulation-VVM}, we first establish tightness of $\hat{\mu}^N$ in a suitable Sobolev space, which implies the existence of a convergent subsequence of $\hat{\mu}^N.$ Then we show that the limit of the subsequence does not depend on the choice of the subsequence and conclude using continuous mapping theorem.
			
			\subsection{Tightness}\label{tight-secsub}
			In this section, we will establish tightness of the process $\hat{\mu}^N$. 
			To do this, we will use the Aldous tightness criterion for Hilbert space-valued stochastic process, as introduced in \cite[pages~$34-35$]{joffe1986weak} (see Definition~\ref{Def-tight} in Appendix). We also refer to \cite[page~$13$]{chevallier_fluctuations_2017}. According to Aldous criterion, we need some moment inequalities to prove tightness. Therefore, we will first establish the following preliminary results. 
			\subsubsection{Preliminary results}
			We first introduce the following two process.
			For $1\leq k\leq N$ and $t\geq 0$, let $A^N_k(t)$ be the number of times that the $k$-th individual with trait $\theta^N_k$ has been (re-)infected on the time interval $(0,t]$. The process $A^N_k$ is then define as follows.
			\[A^N_k(t)=\int_{0}^{t}\int_{\Theta}\int_{0}^{\infty}\indic{\fF^N(s^-)\gamma^N_k(s^-)K(\theta^N_k(s^-),\widetilde\theta)\geq z}Q_k(\rd z,\rd \widetilde\theta,\rd s).\] 
			We consider an i.i.d copy $\left((a_k(t),\theta_k(t))\right)_{k\geq1}$ of the pair $(a(t),\theta(t))$ given by Remark~\ref{Rq-a-theta} using the same $(Q_k)_k$ as in \eqref{eq:SDE}.
			We then introduce for every $k\geq 1$ and $t\geq 0$, the following process \[A_k(t)=\int_{0}^{t}\int_{\Theta}\int_{0}^{\infty}\indic{\fF(s^-)\gamma_k(s^-)K(\theta_k(s^-),\widetilde\theta)\geq z}Q_k(\rd z,\rd \widetilde\theta,\rd s).\]
			In the expression of $A_k$, we use the same $(\lambda,\gamma)$ and $(Q_k)_k$ as in \eqref{eq:SDE}. As the family $(a_k,\theta_k)_{k\geq0}$ are i.i.d, $(A_k)_{k\geq1}$ are also i.i.d. 
			
			 
			 Now for each $k\geq1,$ we compare the process $A^N_k(t)$ with the process $A_k(t)$. 
			 
			\begin{lemma}\label{VVM-CLT-lem_inq}Under Assumption~\ref{VVM-CLT-ass-a_0}, for $k\in\mathbb{N}$ and $T\geq0$, 
				\begin{multline}
					\mathbb{E}\left[\sup_{t\in[0,T]}\left|A^N_k(t)-A_k(t)\right|\Big|\left(a_0^{k'},\theta_0^{k'}\right)_{1\leq k'\leq k}\right]\\
					\begin{aligned}
					&\leq\int_{0}^{T}\int_{\Theta}\mathbb{E}\Big[\left|\fF^N(s^-)\gamma^N_k(s^-)K(\theta^N_k(s^-),\widetilde\theta)-\fF(s^-)\gamma_k(s^-)K(\theta_k(s^-),\widetilde\theta)\right|\Big|\left(a_0^{k'},\theta_0^{k'}\right)_{1\leq k'\leq k}\Big]\dP(\rd\widetilde\theta)ds\nonumber\\&=:\delta^N(T)\label{FLLN-eqA}
					\end{aligned}
				\end{multline}
				and 
				\begin{equation*}
					\mathbb{E}\left[\sup_{t\in[0,T]}\left|a^N_k(t)-a_k(t)\right|\Big|\left(a_0^{k'},\theta_0^{k'}\right)_{1\leq k'\leq k}\right]\leq T\delta^N(T).
				\end{equation*}
				Moreover, 
				\begin{equation}\delta^N(T)\leq\frac{\lambda^*}{\sqrt{N}}T\exp\left(4T\lambda^*\int_\Theta\sup_{\theta\in\Theta}K(\theta,\widetilde\theta)\dP(\rd\widetilde\theta)\right).\label{VVM-FLLN-eqdelta}\end{equation}
			\end{lemma}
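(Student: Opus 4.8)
Write $\mathcal{G}_k:=\sigma\big((a_0^{k'},\theta_0^{k'})_{1\le k'\le k}\big)$ and, for $s\ge0$ and $\widetilde\theta\in\Theta$,
\[
r^N_k(s,\widetilde\theta):=\fF^N(s^-)\gamma^N_k(s^-)K(\theta^N_k(s^-),\widetilde\theta),\qquad
r_k(s,\widetilde\theta):=\fF(s^-)\gamma_k(s^-)K(\theta_k(s^-),\widetilde\theta),
\]
so that $A^N_k$ (resp.\ $A_k$) is the integral against $Q_k$ of $\indic{r^N_k(s,\widetilde\theta)\ge z}$ (resp.\ $\indic{r_k(s,\widetilde\theta)\ge z}$). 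I would prove the lemma in three steps: the two comparison estimates by a pathwise coupling argument exploiting that $A^N_k$ and $A_k$ are driven by the \emph{same} $Q_k$, and then \eqref{VVM-FLLN-eqdelta} by a Grönwall loop on $\delta^N$. For the first estimate, note that $A^N_k(t)-A_k(t)=\int_0^t\!\int_\Theta\!\int_0^\infty\big(\indic{r^N_k\ge z}-\indic{r_k\ge z}\big)Q_k(\rd z,\rd\widetilde\theta,\rd s)$ is a pure-jump process with jumps in $\{-1,0,1\}$, so $\sup_{t\le T}|A^N_k(t)-A_k(t)|\le\int_0^T\!\int_\Theta\!\int_0^\infty|\indic{r^N_k\ge z}-\indic{r_k\ge z}|\,Q_k$. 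Since $Q_k$ is independent of $\mathcal{G}_k$ it is still a Poisson measure with intensity $\rd z\,\dP(\rd\widetilde\theta)\,\rd s$ under $\P(\cdot\mid\mathcal{G}_k)$, while $r^N_k,r_k$ are left-continuous and adapted; the compensation formula together with $\int_0^\infty|\indic{x\ge z}-\indic{y\ge z}|\,\rd z=|x-y|$ then yields exactly the first displayed inequality, with right-hand side $\delta^N(T)$.

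For the age estimate I would use that $(a^N_k,\theta^N_k)$ and $(a_k,\theta_k)$ start from the common value $(a_0^k,\theta_0^k)$, follow the same deterministic ageing flow between jumps, and at an atom of $Q_k$ both jump to the same point $(0,\widetilde\theta)$ whenever both indicators are active there; hence the two paths coincide on $[0,t]$ except on the event that $Q_k$ has, on $[0,t]$, an atom at which exactly one indicator fires, i.e.\ except on the complement of $\{D_k(t)=0\}$ where $D_k(t):=\int_0^t\!\int_\Theta\!\int_0^\infty|\indic{r^N_k\ge z}-\indic{r_k\ge z}|\,Q_k$. Combined with the a priori bound $a^N_k(t)\vee a_k(t)\le a_0^k+t$, this gives the pathwise estimate $\sup_{t\le T}|a^N_k(t)-a_k(t)|\le T\,D_k(T)$ (an atom with small $z$ may re-synchronise the two paths later, which only helps), and taking $\E[\,\cdot\mid\mathcal{G}_k]$ with the first step gives $\le T\,\delta^N(T)$.

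For \eqref{VVM-FLLN-eqdelta} I would bound the integrand of $\delta^N$. Telescoping,
\[
|r^N_k-r_k|\le|\fF^N(s^-)-\fF(s^-)|\,K(\theta^N_k(s^-),\widetilde\theta)+\lambda^*|\gamma^N_k(s^-)-\gamma_k(s^-)|\,K(\theta^N_k(s^-),\widetilde\theta)+\lambda^*|K(\theta^N_k(s^-),\widetilde\theta)-K(\theta_k(s^-),\widetilde\theta)|;
\]
since $\lambda,\gamma,K$ are only assumed bounded (not Lipschitz), the only usable feature of the last two increments is that they vanish when the states agree, so integrating in $\widetilde\theta$ and using $K(\theta,\cdot)\le\sup_{\theta'}K(\theta',\cdot)=:\bar K$ gives $\int_\Theta|r^N_k-r_k|\,\dP(\rd\widetilde\theta)\le\|\bar K\|_{L^1(\dP)}\big(|\fF^N(s^-)-\fF(s^-)|+3\lambda^*\chi_k(s^-)\big)$, where $\chi_k(s^-)=\indic{(a^N_k(s^-),\theta^N_k(s^-))\neq(a_k(s^-),\theta_k(s^-))}\le D_k(s)$. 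By the first step $\E[\chi_k(s^-)\mid\mathcal{G}_k]\le\delta^N(s)$; and splitting $\fF^N(s)-\fF(s)=\frac1N\sum_j\big(\lambda(a^N_j(s),\theta^N_j(s))-\lambda(a_j(s),\theta_j(s))\big)+\frac1N\sum_j\big(\lambda(a_j(s),\theta_j(s))-\fF(s)\big)$, the first sum is $\le\frac{\lambda^*}{N}\sum_j D_j(s)$ with conditional mean $\le\lambda^*\delta^N(s)$ by exchangeability, while the second is an average of i.i.d.\ centred variables bounded by $\lambda^*$, of conditional $L^1$-norm $O(\lambda^*/\sqrt N)$ --- this is the sole source of the factor $N^{-1/2}$. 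Collecting these bounds yields an inequality of the form $\delta^N(T)\le \tfrac{c\lambda^*}{\sqrt N}T+c'\lambda^*\int_0^T\delta^N(s)\,\rd s$, with constants controlled by $\int_\Theta\bar K\,\rd\dP$, and since $\delta^N(T)<\infty$ thanks to Assumption~\ref{VVM-CLT-ass-a_0}(2), Grönwall's lemma gives \eqref{VVM-FLLN-eqdelta}.

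The main obstacle is exactly the non-Lipschitz nature of $\lambda,\gamma,K$: one cannot linearise the jump rates, so the discrepancy between the $N$-particle system and the limiting dynamics has to be propagated through the integer-valued coupling counters $D_k$. This makes the estimate for $\delta^N$ self-referential, so one must recognise the Grönwall structure and isolate the genuinely $O(N^{-1/2})$ contribution, which arises only from the law-of-large-numbers fluctuation of $\fF^N$ about $\fF$ (the coupling-break terms and the $\gamma$/$K$ increments are all of order $\delta^N$ and get absorbed into the Grönwall integral).
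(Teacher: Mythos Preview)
Your overall strategy---coupling $A^N_k$ and $A_k$ through the common Poisson measure $Q_k$, compensating to get $\delta^N$, and then closing a Gr\"onwall loop by splitting $\fF^N-\fF$ into a coupling part (of order $\delta^N$) and an i.i.d.\ fluctuation part (of order $N^{-1/2}$)---is exactly the approach the paper invokes by reference to \cite[Lemma~6.2]{forien-Zotsa2022stochastic}, with the adaptation $\int_\Theta|r^N_k-r_k|\,\dP(\rd\widetilde\theta)\le 2\lambda_*\int_\Theta\sup_\theta K(\theta,\widetilde\theta)\,\dP(\rd\widetilde\theta)$ playing the role of the bound $|\Upsilon^N_k-\Upsilon_k|\le\lambda_*$ there. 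Your first inequality and the Gr\"onwall step are correct as written.

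There is, however, a genuine gap in your argument for the age estimate. You assert the pathwise bound $\sup_{t\le T}|a^N_k(t)-a_k(t)|\le T\,D_k(T)$, justifying it by the a~priori bound $a^N_k(t)\vee a_k(t)\le a_0^k+t$. But this does not follow: if the first atom at which the indicators differ occurs at time $\tau$ and neither process has jumped before, then one age resets to $0$ while the other equals the common pre-jump value $a_0^k+\tau$, so the difference is $a_0^k+\tau$, which can be arbitrarily large compared to $T$ when $a_0^k$ is large, even though $D_k(T)=1$. The bound you can actually extract from your coupling is $\sup_{t\le T}|a^N_k(t)-a_k(t)|\le (a_0^k+T)\,\indic{D_k(T)\ge1}\le (a_0^k+T)D_k(T)$, which after conditioning gives $(a_0^k+T)\delta^N(T)$ rather than $T\delta^N(T)$. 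Either the stated constant in the lemma carries an implicit dependence on $a_0^k$ (which is harmless, since this particular estimate is not used elsewhere in the paper), or a finer argument is needed; in any case your passage from $a_0^k+t$ to $T$ is the step that fails.
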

			\begin{proof}
				The proof follows exactly as in the proof of \cite[Lemma~$6.2$]{forien-Zotsa2022stochastic}, taking, 
				\[\lambda_{k,A^N_k(t)}(a^N_k(t)):=\lambda(a^N_k(t),\theta^N_k(t),\text{ and }\gamma_{k,A^N_k(t)}(a^N_k(t)):=\gamma(a^N_k(t),\theta^N_k(t).\]
				In \cite{forien-Zotsa2022stochastic} the elapsed time since the last infection corresponds to what we define as age of infection. 
				Similarly, we take $\left(\lambda_{k,A_k(t)}(a_k(t)),\gamma_{k,A_k(t)}(a_k(t))\right)$ in the same manner.

				The main difference is that, in \cite{forien-Zotsa2022stochastic} the rate at which the process $A_k^N$ jumps, $\Upsilon^N_k$ and the rate at which the process $A_k$ jumps, $\Upsilon_k,$ do not depend on the parameter $\theta$. The only difference with the proof in \cite[Lemma~$6.2$]{forien-Zotsa2022stochastic} arises from the fact that we use the following inequality $$\int_{\Theta}\left|\fF(s)\gamma_1(s)K(\theta_1(s),\widetilde\theta)-\fF^N(s)\gamma^N_1(s)K(\theta^N_1(s),\widetilde\theta)\right|\dP(\rd\wt\theta)\leq 2\lambda_*\int_\Theta\sup_{\theta\in\Theta}K(\theta,\widetilde\theta)\dP(\rd\widetilde\theta)$$ instead of the inequality $|\Upsilon_1(s)-\Upsilon_1^N(s)|\leq\lambda_*$ given in \cite[Lemma~$6.2$]{forien-Zotsa2022stochastic}.
			\end{proof}
			As in \cite{ngoufack2025functional} we introduce 
			\[\chi_N^{(k)}(t,a_0,\theta_0)=\mathbb{P}\left( (a^N_{k'}(s))_{s\in[0,t]}\neq (a_{k'}(s))_{s\in[0,t]},\forall k'=1,\cdots,k\big|\left(a_0^{i},\theta_0^{i}\right)_{1\leq i\leq k}\right),\]
			and we deduce the following Proposition for the moments inequality.
			\begin{prop}\label{VVM-CLT-prop-inq}
				Under Assumption~\ref{VVM-CLT-ass-a_0} for all $N\geq k,$ and $t\in[0,T]$ there are positive constant $C_{k,T}$ independent of $(a_0^i)_{1\leq i\leq k}$ such that,
				\[\chi_N^{(k)}(t,a_0,\theta_0)\leq C_{k,T}N^{-k/2},\textbf{\rm and }\E\left[|\fF^N(t)-\fF(t)|^k\big|\left(a_0^{i},\theta_0^{i}\right)_{1\leq i\leq k}\right]\leq C_{k,T}N^{-k/2}.\]
			\end{prop}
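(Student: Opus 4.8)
The plan is to prove the two bounds simultaneously by induction on $k$, controlling the discrepancy between the interacting system $(a^N_{k'},\theta^N_{k'})$ and the i.i.d.\ limit system $(a_{k'},\theta_{k'})$ through the coupling already fixed in the text (same Poisson measures $(Q_{k'})$, same $(\lambda,\gamma)$). First I would observe that
\[
\chi_N^{(k)}(t,a_0,\theta_0)\leq \sum_{k'=1}^k \mathbb{P}\left((a^N_{k'}(s))_{s\leq t}\neq(a_{k'}(s))_{s\leq t}\,\big|\,(a_0^i,\theta_0^i)_{i\leq k}\right),
\]
and that the event $\{(a^N_{k'}(s))_{s\leq t}\neq(a_{k'}(s))_{s\leq t}\}$ forces a discrepancy in the jump times of $A^N_{k'}$ versus $A_{k'}$, hence is contained in $\{\sup_{s\leq t}|A^N_{k'}(s)-A_{k'}(s)|\geq 1\}$. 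By Markov's inequality and Lemma~\ref{VVM-CLT-lem_inq} this probability is at most $\delta^N(t)\leq C_T N^{-1/2}$, which already gives the case $k=1$ of the first inequality; the case $k=1$ of the second inequality follows from the argument below with $k=1$.

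The induction step is where the real work lies. For the force-of-infection bound I would write
\[
\fF^N(t)-\fF(t)=\frac{1}{N}\sum_{j=1}^N\big(\lambda(a^N_j(t),\theta^N_j(t))-\lambda(a_j(t),\theta_j(t))\big)+\frac{1}{N}\sum_{j=1}^N\lambda(a_j(t),\theta_j(t))-\fF(t),
\]
where the last two terms form a centered i.i.d.\ average (since $\fF(t)=\mathbb{E}[\lambda(a(t),\theta(t))]$ by Remark~\ref{Rq-a-theta}), controlled in $L^k$ by the Marcinkiewicz--Zygmund / Rosenthal inequality at rate $N^{-1/2}$, uniformly in the conditioning because the conditioning only fixes finitely many initial data. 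For the first sum, split the $j$-index into $j\leq k$ and $j>k$: the $j\leq k$ terms are bounded by $2\lambda_*k/N\leq C_{k}N^{-1}$, and for $j>k$ the summand is nonzero only on the discrepancy event, so using $0\leq\lambda\leq\lambda_*$ and raising to the $k$-th power via Jensen,
\[
\mathbb{E}\left[\Big|\tfrac{1}{N}\sum_{j>k}(\cdots)\Big|^k\,\Big|\,(a_0^i,\theta_0^i)_{i\leq k}\right]\leq (2\lambda_*)^k\,\frac{1}{N}\sum_{j>k}\mathbb{P}\big((a^N_j(s))_{s\leq t}\neq(a_j(s))_{s\leq t}\,\big|\,\cdots\big),
\]
and by exchangeability (given the first $k$ initial conditions, the remaining coordinates are exchangeable) each such probability equals a single $N^{-1}$-type quantity; invoking the Markov/Lemma~\ref{VVM-CLT-lem_inq} bound gives rate $N^{-1/2}$ for the probability, hence the whole first-sum term is $O(N^{-1/2})$ in $L^k$, actually $O(N^{-1})$ after the averaging. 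Combining, $\mathbb{E}[|\fF^N(t)-\fF(t)|^k\,|\,\cdots]\leq C_{k,T}N^{-k/2}$.

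For the $\chi_N^{(k)}$ bound beyond $k=1$, I would feed this force-of-infection estimate back into a refined version of Lemma~\ref{VVM-CLT-lem_inq}: the discrepancy event for coordinate $k'$ is driven by $\delta^N_{k'}(t):=\int_0^t\int_\Theta \mathbb{E}[|\fF^N\gamma^N_{k'}K-\fF\gamma_{k'}K|\,|\,\cdots]\,\nu(d\widetilde\theta)ds$, and now one bounds the integrand not by the crude $2\lambda_*$ but by splitting $|\fF^N\gamma^N_{k'}K-\fF\gamma_{k'}K|\leq \lambda_*\sup_\theta K(\theta,\widetilde\theta)(|\fF^N-\fF|+|\gamma^N_{k'}-\gamma_{k'}|+\text{trait discrepancy})$, each factor of which is itself $O(N^{-1/2})$ in conditional expectation by the previous steps and by Lemma~\ref{VVM-CLT-lem_inq}. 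This yields $\mathbb{P}((a^N_{k'})\neq(a_{k'}))\leq \mathbb{E}[\sup|A^N_{k'}-A_{k'}|]\leq C N^{-1/2}$ and, iterating the bootstrap $k$ times to gain a full factor $N^{-1/2}$ per index through the product structure of the joint discrepancy event, the product bound $\chi_N^{(k)}\leq C_{k,T}N^{-k/2}$. The uniformity of all constants in $(a_0^i)_{i\leq k}$ is automatic because Assumption~\ref{VVM-CLT-ass-a_0}(1) is used only in Lemma~\ref{VVM-CLT-lem_inq}, whose bound \eqref{VVM-FLLN-eqdelta} is already uniform in the initial ages, while Assumption~\ref{VVM-CLT-ass-a_0}(2) supplies finiteness of $\int_\Theta\sup_\theta K(\theta,\widetilde\theta)\nu(d\widetilde\theta)$.

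\textbf{Main obstacle.} The delicate point is the bootstrap that upgrades $N^{-1/2}$ to $N^{-k/2}$: one must argue that the $k$-fold discrepancy event $\bigcap_{k'\leq k}\{(a^N_{k'})\neq(a_{k'})\}$ genuinely has probability $O(N^{-k/2})$ rather than merely $O(N^{-1/2})$, which requires an induction where the estimate for $k-1$ coordinates is plugged into the rate governing the $k$-th, together with a conditional-independence/exchangeability argument to decouple the indices. Making the constants explicitly independent of the fixed initial conditions throughout this recursion — and verifying that the $L^k$ moment inequality for the i.i.d.\ remainder term is likewise conditionally uniform — is the part that needs the most care.
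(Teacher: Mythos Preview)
Your overall plan --- simultaneous induction on $k$, splitting $\fF^N-\fF$ into a coupling error plus an i.i.d.\ fluctuation handled by Rosenthal, and bootstrapping the discrepancy probability via Lemma~\ref{VVM-CLT-lem_inq} --- is exactly the strategy of the paper (which defers to \cite[Proposition~5.4]{ngoufack2025functional}, replacing $\Delta^N_k$ by the integral of the difference of indicators and using the inequality $\int_\Theta|\fF\gamma_1K-\fF^N\gamma_1^N K|\,\nu(\rd\widetilde\theta)\leq |\fF^N-\fF|+2\lambda_*\int_\Theta\sup_\theta K(\theta,\widetilde\theta)\,\nu(\rd\widetilde\theta)\,\Delta^N_1$). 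So the architecture is right.

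There is, however, a genuine gap in your treatment of the coupling-error sum $\frac{1}{N}\sum_{j>k}\big(\lambda(a^N_j,\theta^N_j)-\lambda(a_j,\theta_j)\big)$. Your displayed Jensen bound gives the \emph{$k$-th moment} at most $(2\lambda_*)^k\cdot\frac{1}{N}\sum_{j>k}\mathbb{P}(\text{disc}_j)\leq C N^{-1/2}$, which is an $L^k$ norm of order $N^{-1/(2k)}$, not $N^{-1/2}$ as you claim. For $k\geq 2$ this dominates the i.i.d.\ term and the conclusion $\mathbb{E}[|\fF^N-\fF|^k\,|\,\cdots]\leq C_{k,T}N^{-k/2}$ does not follow. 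Jensen collapses the product structure that is precisely what produces the factor $N^{-k/2}$. The fix --- and this is what the induction in \cite{ngoufack2025functional} actually does --- is to expand $\big(\frac{1}{N}\sum_j X_j\big)^k=\frac{1}{N^k}\sum_{j_1,\ldots,j_k}\prod_i X_{j_i}$, bound $|X_j|\leq 2\lambda_*\Delta^N_j(t)$ (or $2\lambda_*\mathbf{1}_{\text{disc}_j}$), and for tuples with $m$ distinct indices invoke the \emph{joint} discrepancy bound $\chi_N^{(m)}\leq C_{m,T}N^{-m/2}$ from the induction hypothesis; combinatorics over the partitions of $\{1,\ldots,k\}$ then gives $N^{-k/2}$. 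The same expansion, with the roles reversed, is what turns your handwaved ``product structure of the joint discrepancy event'' into an actual proof of $\chi_N^{(k)}\leq C_{k,T}N^{-k/2}$: one writes $\chi_N^{(k)}\leq\mathbb{E}\big[\prod_{k'\leq k}\Delta^N_{k'}(t)\,\big|\,\cdots\big]$ and controls each factor via the rate inequality above, feeding in the $L^{k-1}$ bound on $\fF^N-\fF$ already obtained. Your ``Main obstacle'' paragraph correctly identifies this as the crux, but the body of the argument does not carry it out.
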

			\begin{proof}
				The proof follows exactely the same reasoning as in \cite[Proposition~$5.4$]{ngoufack2025functional}. More precisely, in the proof of \cite[Proposition~$5.4$]{ngoufack2025functional}, we replace $\Delta^N_k(t)$ with the following expression
				\[\Delta^N_k(t):=\int_{0}^{t}\int_{\Theta}\int_{0}^{\infty}\left|\indic{\fF(s^-)\gamma(s^-)K(\theta(s^-),\widetilde\theta)\geq z}-\indic{\fF^N(s^-)\gamma^N_k(s^-)K(\theta^N_k(s^-),\widetilde\theta)\geq z}\right|Q_k(\rd z,\rd \widetilde\theta,\rd s).\] 
				As in the proof of Lemma~\ref{VVM-CLT-lem_inq}, $\lambda_{A^N_k(t)}(a^N_k(t)):=\lambda(a^N_k(t),\theta^N_k(t))$ and $\gamma_{A^N_k(t)}(a^N_k(t)):=\gamma(a^N_k(t),\theta^N_k(t)).$ Similarly $\lambda_{A^N_k(t)}(a^N_k(t)):=\lambda(a^N_k(t),\theta^N_k(t))$ and $\gamma_{A_k(t)}(a_k(t)):=\gamma(a_k(t),\theta_k(t)).$
				
				We recall from \cite{forien-Zotsa2022stochastic} that $\Upsilon^N_k$ is the rate at which process $A^N_k$ jumps, while $\Upsilon_k$ corresponds to the rate at which the process $A_k$ jumps.
				Consequently, instead of using the inequality $|\Upsilon_1(s)-\Upsilon_1^N(s)|\leq\lambda_*,$ given in \cite[Proposition~$5.4$]{ngoufack2025functional}, we use $$\int_{\Theta}\left|\fF(s)\gamma_1(s)K(\theta_1(s),\widetilde\theta)-\fF^N(s)\gamma^N_1(s)K(\theta^N_1(s),\widetilde\theta)\right|\dP(\rd\wt\theta)\leq 2\lambda_*\int_\Theta\sup_{\theta\in\Theta}K(\theta,\widetilde\theta)\dP(\rd\widetilde\theta).$$ 
				Additionally, we also use the fact that
				\begin{multline*}
					\int_{\Theta}\left|\fF(s)\gamma_1(s)K(\theta_1(s),\widetilde\theta)-\fF^N(s)\gamma^N_1(s)K(\theta^N_1(s),\widetilde\theta)\right|\dP(\rd\wt\theta)\\
					\begin{aligned}
						&\leq |\fF^N(s)-\fF(s)|+\lambda_*\int_{\Theta}\left|\gamma_1(s)K(\theta_1(s),\widetilde\theta)-\gamma^N_1(s)K(\theta^N_1(s),\widetilde\theta)\right|\dP(\rd\wt\theta)\\
					&\leq |\fF^N(s)-\fF(s)|+2\lambda_*\int_\Theta\sup_{\theta\in\Theta}K(\theta,\widetilde\theta)\dP(\rd\widetilde\theta)\indic{a^N_1(s)\neq a_1(s)\text{ or }\theta^N_1(s)\neq\theta_1(s)}\\
					&\leq |\fF^N(s)-\fF(s)|+2\lambda_*\int_\Theta\sup_{\theta\in\Theta}K(\theta,\widetilde\theta)\dP(\rd\widetilde\theta)\Delta^N_1(s),
					\end{aligned}
				\end{multline*} 
				instead of the inequality $|\Upsilon_1(s)-\Upsilon_1^N(s)|\leq|\fF^N(s)-\fF(s)|+\lambda_*\Delta^N_1(s)$ given in \cite[Proposition~$5.4$]{ngoufack2025functional}.
			\end{proof} 
			We also establish the following Lemma.
			\begin{lemma}\label{CLT-VVM-lem-inqf}
				For any $\alpha\in\R_+$ and $x:=(a,\theta),y=(a',\theta')\in\R_+\times\Theta,$ the mapping $\delta_x$ and $D_{x,y}:\mathcal{W}_0^{m_d,\alpha}\to\R,$ defined by $\delta_x(\varphi)=\varphi(x)$ and $D_{x,y}(\varphi)=\varphi(x)-\varphi(y)$ are linear continuous. In particular there exists a deterministic constant $C>0$ independent of $\theta$ and $\theta',$ 
				\begin{equation}
					\left\{
					\begin{aligned}
						&\|\delta_x\|_{-(m_d+1),\alpha}\leq\|\delta_x\|_{-m_d,\alpha}\leq C(1+a^\alpha)\\
						&\|D_{x,y}\|_{-(m_d+1),\alpha}\leq\|D_{x,y}\|_{-m_d,\alpha}\leq C(1+(a\vee a')^\alpha).
					\end{aligned}\right.
				\end{equation}
			\end{lemma}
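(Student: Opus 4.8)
The plan is to reduce the whole statement to the Sobolev embedding $\mathcal{W}_0^{m_d,\alpha}\hookrightarrow\mathcal{C}^{0,\alpha}$, that is, the case $k=0$, $m=m_d$ of \eqref{CLT-emb-1}. Linearity of $\delta_x$ and of $D_{x,y}$ is immediate from the definitions, so the entire content of the lemma is the two norm bounds, and continuity of the functionals will be a byproduct of those bounds.

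First I would record that, by \eqref{CLT-emb-1} with $k=0$ and $m=m_d$, there is a constant $C>0$ depending only on $d$ and $\alpha$ such that $\|\varphi\|_{\mathcal{C}^{0,\alpha}}\le C\|\varphi\|_{m_d,\alpha}$ for every $\varphi\in\mathcal{W}_0^{m_d,\alpha}$. Unwinding the definition of the $\mathcal{C}^{0,\alpha}$-norm, this means $|\varphi(a,\theta)|\le(1+a^{\alpha})\|\varphi\|_{\mathcal{C}^{0,\alpha}}\le C(1+a^{\alpha})\|\varphi\|_{m_d,\alpha}$ for \emph{every} $(a,\theta)\in\R_+\times\Theta$; since the supremum defining $\|\cdot\|_{\mathcal{C}^{0,\alpha}}$ already ranges over all traits, this estimate is uniform in $\theta$, which is precisely what makes $C$ independent of $\theta$ and $\theta'$. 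Evaluating at $x=(a,\theta)$ gives $|\delta_x(\varphi)|\le C(1+a^{\alpha})\|\varphi\|_{m_d,\alpha}$, hence $\delta_x\in\mathcal{W}_0^{-m_d,\alpha}$ with $\|\delta_x\|_{-m_d,\alpha}\le C(1+a^{\alpha})$. For $D_{x,y}$ I would simply use the triangle inequality together with this bound: $|\varphi(x)-\varphi(y)|\le|\varphi(x)|+|\varphi(y)|\le C\big((1+a^{\alpha})+(1+a'^{\alpha})\big)\|\varphi\|_{m_d,\alpha}\le 2C\big(1+(a\vee a')^{\alpha}\big)\|\varphi\|_{m_d,\alpha}$, and absorb the factor $2$ into the constant, which yields the stated bound for $\|D_{x,y}\|_{-m_d,\alpha}$.

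Finally, the left-hand inequalities $\|\delta_x\|_{-(m_d+1),\alpha}\le\|\delta_x\|_{-m_d,\alpha}$ and $\|D_{x,y}\|_{-(m_d+1),\alpha}\le\|D_{x,y}\|_{-m_d,\alpha}$ follow from the continuous inclusion $\mathcal{W}_0^{m_d+1,\alpha}\subset\mathcal{W}_0^{m_d,\alpha}$ with $\|\cdot\|_{m_d,\alpha}\le\|\cdot\|_{m_d+1,\alpha}$ recalled earlier: restricting a functional that is bounded on $\mathcal{W}_0^{m_d,\alpha}$ to the smaller space $\mathcal{W}_0^{m_d+1,\alpha}$ can only decrease its operator norm, which is exactly the dual-norm monotonicity. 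There is no genuine obstacle in this lemma — it is a direct corollary of the quoted Sobolev embedding — and the only point that deserves an explicit line is the uniformity of the constant in the trait variables, which, as noted, is automatic because $\|\cdot\|_{\mathcal{C}^{0,\alpha}}$ already controls the supremum over $\theta$. (If one ever needed a sharper estimate for $D_{x,y}$ carrying a factor $|x-y|$, one would instead invoke $\mathcal{W}_0^{m_d+1,\alpha}\hookrightarrow\mathcal{C}^{1,\alpha}$ and a mean-value argument, but this refinement is not required here.)
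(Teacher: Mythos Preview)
Your proof is correct and follows essentially the same route as the paper: reduce $D_{x,y}$ to $\delta_x$ and $\delta_y$ via the triangle inequality, and bound $\|\delta_x\|_{-m_d,\alpha}$ using the Sobolev embedding $\mathcal{W}_0^{m_d,\alpha}\hookrightarrow\mathcal{C}^{0,\alpha}$ from \eqref{CLT-emb-1}. Your additional remarks on the dual-norm monotonicity and the uniformity in $\theta$ are correct elaborations of points the paper leaves implicit.
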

			\begin{proof}
				For all $\varphi,$ we have $|D_{x,y}(\varphi)|\leq|\delta_x(\varphi)|+|\delta_y(\varphi)|$. Hence it suffices to show that $\|\delta_x\|_{-m_d,\alpha}\leq C_1(1+a^\alpha)$. We have for  $x=(a,\theta)$,
				\[|\delta_x(\varphi)|=|\varphi(a,\theta)|\leq\|\varphi\|_{\mathcal{C}^{0,\alpha}}(1+a^{\alpha})\leq  C\|\varphi\|_{m_d,\alpha}(1+a^\alpha),\]
				where we use the fact $ \mathcal{W}_0^{m_d,\alpha}\hookrightarrow\mathcal{C}^{0,\alpha}.$
			\end{proof}
		\subsubsection{Suitable Sobolev space to establish the tightness of $\hat{\mu}^N$ }
			\begin{lemma}\label{CLT-lem-eta}
				Under Assumption~\ref{VVM-CLT-ass-a_0}, for all $T\geq 0,$
				\[\sup_{N\geq 1}\sup_{0\leq t\leq T}\E\left[\|\hat{\mu}^N_t\|_{-m_d,\alpha}^2\right]<\infty.\]
			\end{lemma}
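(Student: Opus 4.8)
The plan is to couple the interacting particles with their i.i.d.\ companions $(a_k,\theta_k)_{k\ge1}$ introduced before Lemma~\ref{VVM-CLT-lem_inq}, and to combine the dual bounds of Lemma~\ref{CLT-VVM-lem-inqf} with the conditional moment estimates of Proposition~\ref{VVM-CLT-prop-inq}. Setting $\bar\mu^N_t:=\frac1N\sum_{k=1}^N\delta_{(a_k(t),\theta_k(t))}$, I would decompose
\[\hat\mu^N_t=\hat\nu^N_t+\hat\eta^N_t,\qquad \hat\nu^N_t:=\sqrt N\,(\mu^N_t-\bar\mu^N_t),\quad \hat\eta^N_t:=\sqrt N\,(\bar\mu^N_t-\mu_t).\]
Since ages grow at unit speed between infections and are reset to $0$ at each infection, $a^N_k(t)\vee a_k(t)\le a_0^k+t$; combined with $\mathcal W^{m_d,\alpha}_0\hookrightarrow\mathcal C^{0,\alpha}$ and Lemma~\ref{CLT-VVM-lem-inqf}, this shows that $\delta_{(a^N_k(t),\theta^N_k(t))}$, $\delta_{(a_k(t),\theta_k(t))}$ and $\mu_t=\E[\delta_{(a_1(t),\theta_1(t))}]$ all define elements of $\mathcal W^{-m_d,\alpha}_0$, with norms bounded by $C(1+(a_0^k+t)^\alpha)$ (for the Dirac masses) and by $C\,\E[1+(a_0+t)^\alpha]$ (for $\mu_t$). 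In particular $\hat\nu^N_t,\hat\eta^N_t\in\mathcal W^{-m_d,\alpha}_0$ almost surely, and it is enough to bound $\E\|\hat\eta^N_t\|^2_{-m_d,\alpha}$ and $\E\|\hat\nu^N_t\|^2_{-m_d,\alpha}$ separately, uniformly in $N$ and in $t\le T$.

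For the i.i.d.\ part, the summands $Z_k:=\delta_{(a_k(t),\theta_k(t))}-\mu_t$ are i.i.d.\ and centred in the Hilbert space $\mathcal W^{-m_d,\alpha}_0$ (centred because $(a_k(t),\theta_k(t))$ has law $\mu_t$, by Remark~\ref{Rq-a-theta}), so the cross terms vanish when expanding the square and
\[\E\|\hat\eta^N_t\|^2_{-m_d,\alpha}=\E\|Z_1\|^2_{-m_d,\alpha}\le 2\,\E\big\|\delta_{(a_1(t),\theta_1(t))}\big\|^2_{-m_d,\alpha}+2\|\mu_t\|^2_{-m_d,\alpha}.\]
By the norm bounds above and $a_1(t)\le a_0^1+t$, the right-hand side is at most $C\big(\E[(1+(a_0+t)^\alpha)^2]+\E[1+(a_0+t)^\alpha]^2\big)$, which is finite and bounded for $t\in[0,T]$ by Assumption~\ref{VVM-CLT-ass-a_0}$(1)$; this estimate does not depend on $N$.

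For the comparison part, $\hat\nu^N_t=\frac1{\sqrt N}\sum_{k=1}^N D_{(a^N_k(t),\theta^N_k(t)),\,(a_k(t),\theta_k(t))}$, and on the complement of the divergence event $E^N_k(t):=\{(a^N_k(s))_{s\le t}\ne(a_k(s))_{s\le t}\}$ the two coupled paths of particle $k$ coincide, so the $k$-th summand vanishes; on $E^N_k(t)$, Lemma~\ref{CLT-VVM-lem-inqf} together with $a^N_k(t)\vee a_k(t)\le a_0^k+t$ bounds its norm by $C(1+(a_0^k+t)^\alpha)$. Expanding the square and using exchangeability,
\[\E\|\hat\nu^N_t\|^2_{-m_d,\alpha}\le\frac{C^2}{N}\Big(N\,\E\big[(1+(a_0^1+t)^\alpha)^2\,\indic{E^N_1(t)}\big]+N(N-1)\,\E\big[(1+(a_0^1+t)^\alpha)(1+(a_0^2+t)^\alpha)\,\indic{E^N_1(t)\cap E^N_2(t)}\big]\Big).\]
Conditioning respectively on $(a_0^i,\theta_0^i)_{i\le1}$ and on $(a_0^i,\theta_0^i)_{i\le2}$ — which is legitimate since $a^N_k(t),a_k(t)\le a_0^k+t$ makes the prefactors measurable with respect to these $\sigma$-fields — the conditional probabilities of $E^N_1(t)$ and of $E^N_1(t)\cap E^N_2(t)$ are exactly $\chi^{(1)}_N(t,\cdot)$ and $\chi^{(2)}_N(t,\cdot)$, bounded respectively by $C_{1,T}N^{-1/2}$ and $C_{2,T}N^{-1}$ uniformly in the initial ages by Proposition~\ref{VVM-CLT-prop-inq}. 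Since the $a_0^i$ are i.i.d.\ with $\E[a_0^{2\alpha}]<\infty$, the diagonal term is then $O(N^{-1/2})$ and the off-diagonal term $O(1)$, uniformly in $t\le T$; adding the two estimates (with $\|x+y\|^2\le 2\|x\|^2+2\|y\|^2$) completes the proof.

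The delicate point is the off-diagonal sum of order $N^2$ in the last display: with only an $N^{-1/2}$-type bound on the joint-divergence probability it would grow like $N^{1/2}$ and diverge, so everything hinges on the two-particle estimate $\chi^{(2)}_N\le C_{2,T}N^{-1}$ of Proposition~\ref{VVM-CLT-prop-inq}. The uniformity of that bound in the initial ages is equally crucial, for it is precisely what allows us to average over the initial conditions using only the $2\alpha$-th moment of $a_0$ from Assumption~\ref{VVM-CLT-ass-a_0}$(1)$, rather than a higher moment that a naive Cauchy--Schwarz split would require.
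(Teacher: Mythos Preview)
Your proof is correct and follows essentially the same route as the paper: the same decomposition into an i.i.d.\ piece $\sqrt N(\bar\mu^N_t-\mu_t)$ and a comparison piece $\sqrt N(\mu^N_t-\bar\mu^N_t)$, the same use of Lemma~\ref{CLT-VVM-lem-inqf} to control Dirac norms by $C(1+(a_0^k+t)^\alpha)$, and the same appeal to Proposition~\ref{VVM-CLT-prop-inq} (the $\chi^{(2)}_N\le C_{2,T}N^{-1}$ bound) to kill the off-diagonal sum after conditioning on the initial data. The only cosmetic difference is that the paper expands the $\mathcal W^{-m_d,\alpha}_0$-norm via Parseval against an orthonormal basis $(\varphi_k)_k$, whereas you work directly with the Hilbert-space structure; the two formulations are equivalent.
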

			\begin{proof}We adapt the proof of \cite[Proposition~$4.7$]{chevallier_fluctuations_2017}.
				Let $(\varphi_k)_{k\geq 1}$ be an orthonormal basis of $\mathcal{W}^{m_d,\alpha}_0$. By Parseval's identity,
				\[\|\hat{\mu}^N_t\|_{-m_d,\alpha}^2=\sum_{k=1}^\infty\langle\hat{\mu}^N_t,\varphi_k\rangle^2.\]
				We have 
				\begin{align}\label{VVM-CLT-eq-mu-0}
					\langle\hat{\mu}^N_t,\varphi_k\rangle&=\sqrt{N}\left(\frac{1}{N}\sum_{j=1}^{N}\varphi_k(a_j^N(t),\theta_j^N(t))-\E\left[\varphi_k(a_1(t),\theta_1(t))\right]\right)\nonumber\\
					&:=\Xi^N_{k,1}(t)+\Xi^N_{k,2}(t),
				\end{align}
				where 
				\begin{numcases}{}
					\Xi^N_{k,1}(t)=\sqrt{N}\left(\frac{1}{N}\sum_{j=1}^{N}\left(\varphi_k(a_j^N(t),\theta_j^N(t))-\varphi_k(a_j(t),\theta_j(t))\right)\right)\nonumber\\
					\Xi^N_{k,2}(t)=\sqrt{N}\left(\frac{1}{N}\sum_{j=1}^{N}\varphi_k(a_j(t),\theta_j(t))-\E\left[\varphi_k(a_1(t),\theta_1(t))\right]\right)\nonumber
				\end{numcases}
				Using the independence of the family $(a_j,\theta_j)_j,$ it follows that,
				\begin{align*}
					\E\left[\sum_{k\geq 1}\Xi^N_{k,2}(t)^2\right]&=\frac{1}{N}\sum_{k\geq 1}\sum_{j=1}^{N}\E\left[\left(\varphi_k(a_j(t),\theta_j(t))-\E\left[\varphi_k(a_1(t),\theta_1(t))\right]\right)^2\right]\nonumber\\
					&=\sum_{k\geq 1}\E\left[\left(\varphi_k(a_1(t),\theta_1(t))-\E\left[\varphi_k(a_1(t),\theta_1(t))\right]\right)^2\right]\nonumber\\
					&\leq\sum_{k\geq 1}\E\left[\left(\varphi_k(a_1(t),\theta_1(t))\right)^2\right]\nonumber\\
					&=\sum_{k\geq 1}\E\left[\langle\delta_{(a_1(t),\theta_1(t))},\varphi_k\rangle^2\right]\nonumber\\
					&=\E\left[\|\delta_{(a_1(t),\theta_1(t))}\|_{-m_d,\alpha}^2\right]\nonumber\\
					&\leq C^2\E\left[\left(1+(a_0^1+T)^\alpha\right)^2\right]\\
					&\leq2^{2\alpha}C^2(1+\E[a_0^{2\alpha}]+T^{2\alpha})<\infty,
				\end{align*} 
				where line six follows from Lemma~\ref{CLT-VVM-lem-inqf} and the fact that $a_1(t)\leq a_0^1+T,$ while the last line follows from convexity and Assumption~\ref{VVM-CLT-ass-a_0}.
				
				Consequently 
				\begin{equation}\label{VVM-CLT-eq-def-0}
					\sup_{N\geq 1}\sup_{0\leq t\leq T} \E\left[\sum_{k\geq 1}\Xi^N_{k,2}(t)^2\right]<\infty.
				\end{equation}
				On the other hand, by exchangeability we have 
				\begin{multline}\label{VVM-CLT-eq-def-1'}
					\E\left[\sum_{k\geq 1}\left(\Xi^N_{k,1}(t)\right)^2\right]=\frac{1}{N}\sum_{k\geq 1}\E \left[\left(\sum_{j=1}^{N}\left(\varphi_k(a_j^N(t),\theta_j^N(t))-\varphi_k(a_j(t),\theta_j(t))\right)\right)^2\right]\\
					\begin{aligned}
						&=\sum_{k\geq 1}\E \left[\left(\varphi_k(a_1^N(t),\theta_1^N(t))-\varphi_k(a_1(t),\theta_1(t))\right)^2\right]\\
						&\hspace{1cm}+(N-1)\sum_{k\geq 1}\E \left[\left(\varphi_k(a_1^N(t),\theta_1^N(t))-\varphi_k(a_1(t),\theta_1(t))\right)\left(\varphi_k(a_2^N(t),\theta_2^N(t))-\varphi_k(a_2(t),\theta_2(t))\right)\right].
					\end{aligned}
				\end{multline}
				In what follows, we use the notation $D$ introduced in Lemma~\ref{CLT-VVM-lem-inqf}.
				
				Since $a_i^N(t),\,a_i(t)$ are upper bounded by $a_0^i+t$ and $\theta_i^N(t), \theta_i(t)\in\Theta$,
				\begin{multline}\label{VVM-CLT-eq-def-1}
					(N-1)\sum_{k\geq 1}\E \left[\left(\varphi_k(a_1^N(t),\theta_1^N(t))-\varphi_k(a_1(t),\theta_1(t))\right)\left(\varphi_k(a_2^N(t),\theta_2^N(t))-\varphi_k(a_2(t),\theta_2(t))\right)\right]\\
					\begin{aligned}
						&\leq (N-1)\E\left[\indic{\left\{(a^N_{k'}(t)\neq (a_{k'}(t)),\,k'=1,2\right\}}\sup_{x,y\leq a_0^1\vee a_0^2+T,\theta,\theta'\in\Theta}\sum_{k\geq 1}\left|\varphi_k(x,\theta)-\varphi_k(y,\theta')\right|^2\right]\\
						&=(N-1)\E\left[\indic{\left\{(a^N_{k'}(t)\neq (a_{k'}(t)),\,k'=1,2\right\}}\sup_{x,y\leq a_0^1\vee a_0^2+T,\theta,\theta'\in\Theta}\left\|D_{(x,\theta),(y,\theta')}\right\|_{-m_d,\alpha}^2\right]\\
						&\leq2^{2\alpha} C^2(N-1)\E\left[\indic{\left\{(a^N_{k'}(t)\neq (a_{k'}(t)),\,k'=1,2\right\}}\left(1+(a_0^1\vee a^2_0)^{2\alpha}+T^{2\alpha}\right)\right]\\
						&=2^{2\alpha} C^2(N-1)\E\left[\chi_N^{(2)}(t,a_0,\theta_0)\left(1+(a_0^1\vee a^2_0)^{2\alpha}+T^{2\alpha}\right)\right]\\
						&\leq 2^{4\alpha-1} C^2C_{2,T}(1+2\E\left[a_0^{2\alpha}\right]+T^{2\alpha})<\infty,
					\end{aligned}
				\end{multline}
				where the second equality follows from the fact that $(\varphi_k)_k$ is an orthogonal basis of $\mathcal{W}_0^{m_d,\alpha},$ and while the next equality from Lemma~\ref{CLT-VVM-lem-inqf}. The last line follows from Proposition~\ref{VVM-CLT-prop-inq}, convexity and Assumption~\ref{VVM-CLT-ass-a_0}.

				Using again the fact that $(\varphi_k)_k$ is an orthonormal basis of $\mathcal{W}_0^{m_d,\alpha}$, Lemma~\ref{CLT-VVM-lem-inqf} and Assumption~\ref{VVM-CLT-ass-a_0}, we have
				\begin{align}
					\sum_{k\geq 1}\E \left[\left(\varphi_k(a_1^N(t),\theta_1^N(t))-\varphi_k(a_1(t),\theta_1(t))\right)^2\right]&=\E\left[\sum_{k\geq 1}\langle D_{(a_1^N(t),\theta_1^N(t)),(a_1(t),\theta_1(t))},\varphi_k\rangle^2\right]\nonumber\\
					&=\E\left[\|D_{(a_1^N(t),\theta_1^N(t)),(a_1(t),\theta_1(t))}\|_{-m_d,\alpha}^2\right]\nonumber\\
					&\leq 2^{4\alpha-1}C^2\left(1+\E[a_0^{2\alpha}]+T^{2\alpha}\right)<\infty.\label{VVM-CLT-eq-def-2}
				\end{align}
				Consequently from \eqref{VVM-CLT-eq-def-1'} ,\eqref{VVM-CLT-eq-def-1} and \eqref{VVM-CLT-eq-def-2} it follows that,
				\begin{equation}\label{VVM-CLT-eq-def-3}
					\sup_{N\geq 1}\sup_{0\leq t\leq T} \E\left[\sum_{k\geq 1}\Xi^N_{k,1}(t)^2\right]<\infty.
				\end{equation}
				As a result from \eqref{VVM-CLT-eq-mu-0}, \eqref{VVM-CLT-eq-def-0} and \eqref{VVM-CLT-eq-def-3} it follows that,
				\begin{align*}
					\sup_{N\geq 1}\sup_{0\leq t\leq T} \E\left[\|\hat{\mu}^N\|_{-m_d,\alpha}^2\right]\leq 2\left(\sup_{N\geq 1}\sup_{0\leq t\leq T} \E\left[\sum_{k\geq 1}\Xi^N_{k,1}(t)^2\right]+\sup_{N\geq 1}\sup_{0\leq t\leq T} \E\left[\sum_{k\geq 1}\Xi^N_{k,2}(t)^2\right]\right)<\infty.
				\end{align*}
			\end{proof}
			Lemma~\ref{CLT-lem-eta} implies that for $\alpha>\frac{1}{2},$ we have $\hat{\mu}^N\in \mathcal{W}_0^{-m_d,\alpha}$. We will use this Sobolev space in what follows to  establish tightness.
			\subsubsection{Proof of tightness}
			We will first shown that the  moments of the supremum of the process $(W^N)$ and $(V^N)$ are well defined in suitable Sobolev spaces. Then we will deduce their tightness as well $(\hat{\mu}^N)$. 
			\begin{lemma}\label{CLT-mart-W}
				Under Assumption~\ref{V-As-1}, Assumption~\ref{V-As-2} and Assumption~\ref{VVM-CLT-ass-a_0}, $(W^N_t)_t$ is a martingale with paths in $D(\R_+,\mathcal{W}_0^{-m_d,\alpha})$ almost surely. Moreover, for all $T\geq 0,$
				\begin{equation}\sup_{N}\E\left[\sup_{0\leq t\leq T}\|W^N_{t}\|_{-m_d,\alpha}^2\right]<\infty.\label{CLT-VVM-in-M}\end{equation}
			\end{lemma}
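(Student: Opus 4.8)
The plan is to realise $W^N$ as a square-integrable martingale with values in the Hilbert space $\mathcal{W}_0^{-m_d,\alpha}$ and to reduce the $L^2$-estimate to the scalar case through an orthonormal basis. I would fix an orthonormal basis $(\varphi_k)_{k\ge1}$ of $\mathcal{W}_0^{m_d,\alpha}$; by the Sobolev embedding $\mathcal{W}_0^{m_d,\alpha}\hookrightarrow\mathcal{C}^{0,\alpha}$ every $\varphi_k$ is continuous with $|\varphi_k(a,\theta)|\le\|\varphi_k\|_{\mathcal{C}^{0,\alpha}}(1+a^\alpha)$. For a fixed $\varphi\in\mathcal{W}_0^{m_d,\alpha}$, the scalar process $t\mapsto W^N_t(\varphi)$ of \eqref{Mart-W} (with $f\equiv\varphi$) is a stochastic integral against the family of compensated Poisson measures $(\overline{Q}_k)_k$ with predictable integrand; since $\fF^N(s^-)\gamma_k^N(s^-)K(\theta_k^N(s^-),\wt\theta)\le\lambda_*K(\theta_k^N(s^-),\wt\theta)$, $\int_\Theta K(\theta,\wt\theta)\nu(d\wt\theta)=1$, $\gamma\le1$, and $\E[a_0^{2\alpha}]<\infty$, this integrand lies in $L^2(dz\,\nu(d\wt\theta)\,ds\times\P)$ on each $[0,T]$. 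Hence $W^N_\cdot(\varphi)$ is a square-integrable martingale with predictable quadratic variation $\int_0^t\langle\mu^N_s,\lambda\rangle\langle\mu^N_s,R^{(2)}\varphi\rangle\,ds$ as in \eqref{CLT-eq-W_q}.

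For the path regularity I would write $\overline{Q}_k=Q_k-\widehat{Q}_k$ with $\widehat{Q}_k(dz,d\wt\theta,ds)=dz\,\nu(d\wt\theta)\,ds$: the compensator part of $W^N$ is absolutely continuous in $t$, while the $Q_k$-part is piecewise constant, with a jump at each (re-)infection time of an individual $j$ with new trait $\wt\theta$ equal, as an element of $\mathcal{W}_0^{-m_d,\alpha}$, to $N^{-1/2}\big(\delta_{(0,\wt\theta)}-\delta_{(a_j^N(t^-),\theta_j^N(t^-))}\big)$, whose $\|\cdot\|_{-m_d,\alpha}$-norm is finite by Lemma~\ref{CLT-VVM-lem-inqf}. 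Since the (re-)infection rates are bounded by $\lambda_*$, there are a.s.\ finitely many such jumps on $[0,T]$, so $W^N$ has càdlàg paths in $\mathcal{W}_0^{-m_d,\alpha}$; combined with the scalar martingale property for every test function and the square-integrability proved below, $(W^N_t)_t$ is a $\mathcal{W}_0^{-m_d,\alpha}$-valued martingale.

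For the moment bound, Parseval's identity gives $\|W^N_t\|_{-m_d,\alpha}^2=\sum_{k\ge1}W^N_t(\varphi_k)^2$, so by the quadratic-variation formula and Tonelli
\[
\E\big[\|W^N_t\|_{-m_d,\alpha}^2\big]=\E\Big[\int_0^t\langle\mu^N_s,\lambda\rangle\,\big\langle\mu^N_s,\,\textstyle\sum_{k\ge1}R^{(2)}\varphi_k\big\rangle\,ds\Big].
\]
The crucial point is that, for fixed $(a,\theta)$, the identity $\sum_{k\ge1}\big(\varphi_k(0,\wt\theta)-\varphi_k(a,\theta)\big)^2=\|D_{(0,\wt\theta),(a,\theta)}\|_{-m_d,\alpha}^2$ together with Lemma~\ref{CLT-VVM-lem-inqf} gives $\sum_{k\ge1}\big(\varphi_k(0,\wt\theta)-\varphi_k(a,\theta)\big)^2\le C^2(1+a^\alpha)^2$ uniformly in $\wt\theta$; combined with $\gamma\le1$ and $\int_\Theta K(\theta,\wt\theta)\nu(d\wt\theta)=1$ this yields $\sum_{k\ge1}R^{(2)}\varphi_k(a,\theta)\le C^2(1+a^\alpha)^2\le 2C^2(1+a^{2\alpha})$. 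Using $\langle\mu^N_s,\lambda\rangle\le\lambda_*$ and $a_j^N(s)\le a_0^j+T$ for $s\le T$ one obtains
\[
\sup_{N\ge1}\sup_{0\le t\le T}\E\big[\|W^N_t\|_{-m_d,\alpha}^2\big]\le C_{T,\alpha}\big(1+\E[a_0^{2\alpha}]\big)<\infty,
\]
finite by Assumption~\ref{VVM-CLT-ass-a_0}, and Doob's $L^2$ maximal inequality for the $\mathcal{W}_0^{-m_d,\alpha}$-valued martingale $W^N$ then gives $\E\big[\sup_{t\le T}\|W^N_t\|_{-m_d,\alpha}^2\big]\le 4\,\E\big[\|W^N_T\|_{-m_d,\alpha}^2\big]$, which is \eqref{CLT-VVM-in-M}.

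The only genuinely delicate step is the interchange of the basis sum with the expectation and with the inner $\wt\theta$-integral defining $R^{(2)}$ (all licit by nonnegativity, i.e.\ Tonelli), which rests on the uniform-in-$\wt\theta$ pointwise finiteness of $\sum_{k\ge1}\big(\varphi_k(0,\wt\theta)-\varphi_k(a,\theta)\big)^2$ — precisely the dual-norm estimate of Lemma~\ref{CLT-VVM-lem-inqf} — after which everything closes from the a priori bounds of Assumption~\ref{V-As-1} and Assumption~\ref{V-As-2} and the moment condition $\E[a_0^{2\alpha}]<\infty$.
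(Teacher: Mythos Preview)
Your proof is correct and follows essentially the same route as the paper's: Parseval with an orthonormal basis of $\mathcal{W}_0^{m_d,\alpha}$, the dual-norm bound of Lemma~\ref{CLT-VVM-lem-inqf} on $\|D_{(0,\wt\theta),(a,\theta)}\|_{-m_d,\alpha}$, and Doob's $L^2$ inequality. The only cosmetic differences are that the paper applies Doob componentwise to each scalar martingale $W^N_\cdot(\varphi_k)$ before summing over $k$ (whereas you sum first and then invoke the Hilbert-valued Doob inequality), and your explicit jump decomposition for the c\`adl\`ag property is more detailed than the paper's, which defers that step to \cite{chevallier_fluctuations_2017}.
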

			\begin{proof}We adapt the proof of \cite[Proposition~$4.7$]{chevallier_fluctuations_2017}. We recall \eqref{Mart-W}
				\begin{equation*}
					W^N_t(f_t)=\frac{1}{\sqrt{N}}\sum_{k=1}^{N}\int_0^t\int_{\Theta}\int_{0}^{+\infty}\left(f_s(0,\wt\theta)-f_s(a^N_k(s^-),\theta^N_k(s^-))\right)\indic{\fF^N(s^-)\gamma_k^N(s^-)K(\theta^N_k(s^-),\wt\theta)\geq z}\overline{Q}_k(ds,d\wt\theta,dz)\end{equation*}
				 We start to proof \eqref{CLT-VVM-in-M}.
				Let $(\varphi_k)_{k\geq 1}$ be an orthonormal basis of $\mathcal{W}^{m_d,\alpha}_0$. By Parseval's identity
				\begin{equation*}
					\|W^N_t\|_{-m_d,\alpha}^2=\sum_{k\geq 1}\left(W^N_t(\varphi_k)\right)^2.
				\end{equation*}
				Using the fact that for all $k\geq 1,\, (W^N_t(\varphi_k))_t$ is a martingale, from Doob's Maximal inequality, orthogonality of $(\overline{Q}_k)_k$ and exchangeability, from Assumption~\ref{V-As-1}, it follows that,
				\begin{multline*}
					\E\left[\sup_{0\leq t\leq T}\|W^N_t\|_{-m_d,\alpha}^2\right]=\sum_{k\geq 1}\E \left[\sup_{0\leq t\leq T}\left(W^N_t(\varphi_k)\right)^2\right]\nonumber\\
					\begin{aligned}
					&\leq 4\sum_{k\geq 1}\E\left[\left(W^N_T(\varphi_k)\right)^2\right]\nonumber\\
					&=4\sum_{k\geq 1}\int_0^{T}\int_{\Theta}\E\left[\left(\varphi_k(0,\wt\theta)-\varphi_k(a_1(s),\theta_1(s))\right)^2\fF^N(s)\gamma^N_1(s)K(\theta^N_1(s),\wt\theta)\right]\dP(\rd\wt\theta)ds\nonumber\\
					&\leq4\lambda_*\int_0^{T}\int_{\Theta}\E\left[\|D_{(0,\wt\theta),(a_1(s),\theta_1(s))}\|_{-m_d,\alpha}^2K(\theta^N_1(s),\wt\theta)\right]\dP(\rd\wt\theta)ds\nonumber\\
					&\leq4C^2\lambda_*\int_0^{T}\int_{\Theta}\E\left[\left(1+(a_0^1+T)^{\alpha}\right)^2K(\theta^N_1(s),\wt\theta)\right]\dP(\rd\wt\theta)ds\nonumber\\
					&\leq 2^{2\alpha+2}C^2\lambda_*T(1+\E[a_0^{2\alpha}]+T^{2\alpha})<\infty,
					\end{aligned}
				\end{multline*}
				where the last two lines follows from Lemma~\ref{CLT-VVM-lem-inqf}, Assumption~\ref{V-As-2} and Assumption~\ref{VVM-CLT-ass-a_0}. This conclude \eqref{CLT-VVM-in-M}.

				The integrability of $W^N_t$ in $\mathcal{W}_0^{-m_d,\alpha}$ follows from \eqref{CLT-VVM-in-M}. Gathering this with the fact that for all $k\geq 1,\, (W^N_t(\varphi_k))_t$ is a martingale, it follows that  $(W^N_t)_t$ is a $\mathcal{W}_0^{-m_d,\alpha}$-valued martingale. To conclude it remains to show that  $(W^N_t)_t$ is cadlag. This follows from inequality \eqref{CLT-VVM-in-M} and the fact that $(W^N_t(\varphi_k))_t$ is cadlag, see proof's of \cite[Proposition~$4.7$]{chevallier_fluctuations_2017} for more details.  
			\end{proof}
			We will use the following Lemma to control the moments of the process $(V^N_t)_t$ in Lemma~\ref{CLT-lem-V}.  
		\begin{lemma}\label{CLT-VVM-lem-vf} Under Assumption~\ref{VVM-CLT-ass-a_0} and let $(\varphi_k)_k$ be an orthonormal basis of $\mathcal{W}^{m_d,\alpha}_0$. Then,
			\begin{equation}\label{CLT-VVM-eqR_phi}
				\sup_{N\geq 1}\sup_{0\leq t\leq T}\E\left[\sum_{k\geq 1}\langle\hat\mu^N_t, R\varphi_k\rangle^2\right]<\infty.
			\end{equation}
		\end{lemma}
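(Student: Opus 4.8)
The plan is to regard, for fixed $N$ and $t$, the functional $\Lambda^N_t:\varphi\mapsto\langle\hat\mu^N_t,R\varphi\rangle$ as an element of $\mathcal{W}^{-m_d,\alpha}_0$, so that Parseval's identity yields $\sum_{k\ge1}\langle\hat\mu^N_t,R\varphi_k\rangle^2=\|\Lambda^N_t\|_{-m_d,\alpha}^2$ and the lemma reduces to the uniform estimate $\sup_{N\ge1}\sup_{0\le t\le T}\E[\|\Lambda^N_t\|_{-m_d,\alpha}^2]<\infty$, which I would prove by running the argument of Lemma~\ref{CLT-lem-eta} verbatim with $\varphi_k$ replaced by $R\varphi_k$. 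The preliminary fact to isolate is that $R$ turns Dirac evaluations into bounded functionals carrying exactly the weights of Lemma~\ref{CLT-VVM-lem-inqf}: writing
\[R\varphi(a,\theta)=\gamma(a,\theta)\Bigl(\int_\Theta\varphi(0,\wt\theta)K(\theta,\wt\theta)\nu(\rd\wt\theta)-\varphi(a,\theta)\Bigr),\]
and combining $0\le\gamma\le1$ (Assumption~\ref{V-As-1}), $\int_\Theta K(\theta,\wt\theta)\nu(\rd\wt\theta)=1$ (Assumption~\ref{V-As-2}) and the embedding $\mathcal{W}^{m_d,\alpha}_0\hookrightarrow\mathcal{C}^{0,\alpha}$, one obtains a constant $C$, independent of $\theta$ and $\theta'$, such that the functional $L_{(a,\theta)}:\varphi\mapsto R\varphi(a,\theta)$ lies in $\mathcal{W}^{-m_d,\alpha}_0$ with $\|L_{(a,\theta)}\|_{-m_d,\alpha}\le C(1+a^\alpha)$ and $\|L_{(a,\theta)}-L_{(a',\theta')}\|_{-m_d,\alpha}\le C(1+(a\vee a')^\alpha)$; in particular $\sum_{k\ge1}\langle L_{(a,\theta)},\varphi_k\rangle^2=\|L_{(a,\theta)}\|_{-m_d,\alpha}^2$, and $\Lambda^N_t\in\mathcal{W}^{-m_d,\alpha}_0$ almost surely since $\hat\mu^N_t$ is a finite signed atomic measure whose atoms have age at most $a_0^j+t$.

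Next I would use the i.i.d.\ coupling $(a_k,\theta_k)$ and the identity $\langle\mu_t,R\varphi\rangle=\E[R\varphi(a_1(t),\theta_1(t))]$ (Remark~\ref{Rq-a-theta}) to split
\[\langle\hat\mu^N_t,R\varphi_k\rangle=\underbrace{\tfrac1{\sqrt N}\sum_{j=1}^N\bigl(R\varphi_k(a^N_j(t),\theta^N_j(t))-R\varphi_k(a_j(t),\theta_j(t))\bigr)}_{\Xi^N_{k,1}(t)}+\underbrace{\sqrt N\Bigl(\tfrac1N\sum_{j=1}^N R\varphi_k(a_j(t),\theta_j(t))-\E[R\varphi_k(a_1(t),\theta_1(t))]\Bigr)}_{\Xi^N_{k,2}(t)},\]
and estimate $\sum_{k\ge1}\E[\Xi^N_{k,2}(t)^2]$ and $\sum_{k\ge1}\E[\Xi^N_{k,1}(t)^2]$ exactly as in the derivations of \eqref{VVM-CLT-eq-def-0} and \eqref{VVM-CLT-eq-def-3}. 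For the former, independence of the pairs $(a_j,\theta_j)$ and the termwise bound $\Var(X)\le\E[X^2]$ give $\sum_{k\ge1}\E[\Xi^N_{k,2}(t)^2]\le\E[\|L_{(a_1(t),\theta_1(t))}\|_{-m_d,\alpha}^2]\le C^2\E[(1+(a_0^1+T)^\alpha)^2]$, which is finite uniformly in $N$ and $t\le T$ since $a_1(t)\le a_0^1+T$ and by convexity together with Assumption~\ref{VVM-CLT-ass-a_0}(1). For the latter, exchangeability expands $\sum_{k\ge1}\E[\Xi^N_{k,1}(t)^2]$ into a diagonal term bounded by $\E[\|L_{(a^N_1(t),\theta^N_1(t))}-L_{(a_1(t),\theta_1(t))}\|_{-m_d,\alpha}^2]\le C^2\E[(1+(a_0^1+T)^\alpha)^2]$, plus an off-diagonal term equal to $(N-1)$ times a sum of covariances whose generic summand, by the Cauchy--Schwarz inequality in $\ell^2$, is at most $\|L_{(a^N_1(t),\theta^N_1(t))}-L_{(a_1(t),\theta_1(t))}\|_{-m_d,\alpha}\|L_{(a^N_2(t),\theta^N_2(t))}-L_{(a_2(t),\theta_2(t))}\|_{-m_d,\alpha}$. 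This product vanishes unless $(a^N_i(t),\theta^N_i(t))\ne(a_i(t),\theta_i(t))$ for both $i=1,2$, an event contained in $\{(a^N_{k'}(s))_{s\le t}\ne(a_{k'}(s))_{s\le t},\ k'=1,2\}$ because in the coupling the traits stay equal as long as the ages do; hence, conditioning on $(a_0^i,\theta_0^i)_{i\le2}$ and invoking $\chi_N^{(2)}(t,a_0,\theta_0)\le C_{2,T}N^{-1}$ from Proposition~\ref{VVM-CLT-prop-inq} absorbs the factor $N-1$ and bounds the off-diagonal term by $C^2C_{2,T}\E[(1+(a_0^1\vee a_0^2+T)^\alpha)^2]$, again finite uniformly in $N$ and $t\le T$ by Assumption~\ref{VVM-CLT-ass-a_0}(1). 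Combining the three contributions through $\sum_{k\ge1}(\Xi^N_{k,1}(t)+\Xi^N_{k,2}(t))^2\le2\sum_{k\ge1}\Xi^N_{k,1}(t)^2+2\sum_{k\ge1}\Xi^N_{k,2}(t)^2$ then gives \eqref{CLT-VVM-eqR_phi}.

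The step I expect to be the crux is the off-diagonal cross term, where the crude estimate is of order $N$ and survival relies entirely on the $O(N^{-1})$ decay of $\chi_N^{(2)}$ provided by Proposition~\ref{VVM-CLT-prop-inq}; as that estimate is already available, no genuinely new obstacle appears. The extra $\theta$-dependence that $R$ carries through $\gamma$ and $K$ is harmless, being neutralised by $\gamma\le1$ and $\int_\Theta K(\theta,\wt\theta)\nu(\rd\wt\theta)=1$ before any Sobolev embedding is used, and the difference functional $L_{(a,\theta)}-L_{(a',\theta')}$ is controlled via the trivial inequality $|R\varphi(a,\theta)-R\varphi(a',\theta')|\le|R\varphi(a,\theta)|+|R\varphi(a',\theta')|$ rather than by any Lipschitz-in-$\theta$ estimate, exactly as $D_{x,y}$ is handled in Lemma~\ref{CLT-VVM-lem-inqf}.
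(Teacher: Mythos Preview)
Your proposal is correct and follows essentially the same route as the paper: the same splitting $\langle\hat\mu^N_t,R\varphi_k\rangle=\Xi^N_{k,1}+\Xi^N_{k,2}$ via the i.i.d.\ coupling, the same use of exchangeability to reduce $\Xi^N_{k,1}$ to a diagonal plus off-diagonal term, and the same appeal to Proposition~\ref{VVM-CLT-prop-inq} to kill the factor $N-1$. The only cosmetic difference is that you package the pointwise estimate as a bound on the functional $L_{(a,\theta)}:\varphi\mapsto R\varphi(a,\theta)$ directly, whereas the paper pushes the square inside the $\wt\theta$-integral via Jensen (using that $K(\theta,\cdot)\nu$ is a probability measure) and then invokes the $D_{(0,\wt\theta),(a,\theta)}$ bound from Lemma~\ref{CLT-VVM-lem-inqf}; both yield the same $C(1+a^\alpha)$ control.
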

		\begin{proof}
			We recall \eqref{eq:Rjump-term},
			\begin{equation*}
				Rf(a,\theta)=\int_{\Theta}\left(f(0,\widetilde\theta)-f(a,\theta)\right)\gamma( a,\theta)K(\theta,\widetilde\theta)\dP(\rd\widetilde\theta).
			\end{equation*}
			
			We have 
			\begin{align}\label{VVM-CLT-eq-mu-0'}
				\langle\hat{\mu}^N_t,R\varphi_k\rangle&=\sqrt{N}\left(\frac{1}{N}\sum_{j=1}^{N}R\varphi_k(a_j^N(t),\theta_j^N(t))-\E\left[R\varphi_k(a_1(t),\theta_1(t))\right]\right)\nonumber\\
				&:=\Xi^N_{k,3}(t)+\Xi^N_{k,4}(t),
			\end{align}
			where 
			\begin{numcases}{}
				\Xi^N_{k,3}(t)=\sqrt{N}\left(\frac{1}{N}\sum_{j=1}^{N}\left(R\varphi_k(a_j^N(t),\theta_j^N(t))-R\varphi_k(a_j(t),\theta_j(t))\right)\right)\nonumber\\
				\Xi^N_{k,4}(t)=\sqrt{N}\left(\frac{1}{N}\sum_{j=1}^{N}R\varphi_k(a_j(t),\theta_j(t))-\E\left[R\varphi_k(a_1(t),\theta_1(t))\right]\right)\nonumber
			\end{numcases}
			We recall
			Using the independence of the family $(a_j,\theta_j)_j,\,\gamma\leq 1$ (Assumption~\ref{V-As-1}) and Fubini Theorem's with respect to measure $K(\cdot,\wt\theta)\dP(\rd\wt\theta)$ (Assumption~\ref{V-As-2}), it follows that,
			\begin{align*}
				\E\left[\sum_{k\geq 1}\Xi^N_{k,4}(t)^2\right]&=\frac{1}{N}\sum_{k\geq 1}\sum_{j=1}^{N}\E\left[\left(R\varphi_k(a_j(t),\theta_j(t))-\E\left[R\varphi_k(a_1(t),\theta_1(t))\right]\right)^2\right]\nonumber\\
				&=\sum_{k\geq 1}\E\left[\left(R\varphi_k(a_1(t),\theta_1(t))-\E\left[R\varphi_k(a_1(t),\theta_1(t))\right]\right)^2\right]\nonumber\\
				&\leq\sum_{k\geq 1}\E\left[\left(R\varphi_k(a_1(t),\theta_1(t))\right)^2\right]\nonumber\\
				&\leq\sum_{k\geq 1}\E\left[\int_{\Theta}\left(\varphi_k(0,\wt\theta)-\varphi_k(a_1(t),\theta_1(t))\right)^2K(\theta_1(t),\wt\theta)\dP(\rd\wt\theta)\right]\nonumber\\
				&=\int_{\Theta}\E\left[\|D_{(0,\wt\theta),(a_1(t),\theta_1(t))}\|_{-m_d,\alpha}^2K(\theta_1(t),\wt\theta)\right]\dP(\rd\wt\theta)\nonumber\\
				&\leq 2^{2\alpha}C^2\left(1+\E[a_0^{2\alpha}]+T^{2\alpha}\right),
			\end{align*} 
			where the last line follows from Lemma~\ref{CLT-VVM-lem-inqf} and Assumption~\ref{V-As-2}.
			
			Consequently from Assumption~\ref{VVM-CLT-ass-a_0} 
			\begin{equation}\label{VVM-CLT-eq-def-0'}
				\sup_{N\geq 1}\sup_{0\leq t\leq T} \E\left[\sum_{k\geq 1}\Xi^N_{k,4}(t)^2\right]<\infty.
			\end{equation}
			On the other hand, by exchangeability we have 
			\begin{multline}\label{VVM-CLT-eq-def-1'''}
				\E\left[\sum_{k\geq 1}\left(\Xi^N_{k,3}(t)\right)^2\right]=\frac{1}{N}\sum_{k\geq 1}\E \left[\left(\sum_{j=1}^{N}\left(R\varphi_k(a_j^N(t),\theta_j^N(t))-R\varphi_k(a_j(t),\theta_j(t))\right)\right)^2\right]\\
				\begin{aligned}
					&=\sum_{k\geq 1}\E \left[\left(R\varphi_k(a_1^N(t),\theta_1^N(t))-R\varphi_k(a_1(t),\theta_1(t))\right)^2\right]\\
					&+(N-1)\sum_{k\geq 1}\E \left[\left(R\varphi_k(a_1^N(t),\theta_1^N(t))-R\varphi_k(a_1(t),\theta_1(t))\right)\left(R\varphi_k(a_2^N(t),\theta_2^N(t))-R\varphi_k(a_2(t),\theta_2(t))\right)\right].
				\end{aligned}
			\end{multline}
			Since $a_i^N(t),\,a_i(t)$ are upper bounded by $a_0^i+t$ and $\theta_i^N(t), \theta_i(t)\in\Theta$ and as in \eqref{VVM-CLT-eq-def-1},
			\begin{multline}\label{VVM-CLT-eq-def-1''}
				(N-1)\sum_{k\geq 1}\E \left[\left(R\varphi_k(a_1^N(t),\theta_1^N(t))-R\varphi_k(a_1(t),\theta_1(t))\right)\left(R\varphi_k(a_2^N(t),\theta_2^N(t))-R\varphi_k(a_2(t),\theta_2(t))\right)\right]\\
				\begin{aligned}
					&\leq (N-1)\E\left[\indic{a^N_i(t)\neq a_i(t),i=1,2}\sup_{x,y\leq a_0^1\vee a_0^2+T,\theta,\theta'\in\Theta}\sum_{k\geq 1}\left|R\varphi_k(x,\theta)-R\varphi_k(y,\theta')\right|^2\right]\\
					&\leq C_{2,T}\E\left[\sup_{x,y\leq a_0^1\vee a_0^2+T,\theta,\theta'\in\Theta}\sum_{k\geq 1}\left|R\varphi_k(x,\theta)-R\varphi_k(y,\theta')\right|^2\right]\\
					&\leq 2C_{2,T}\E\left[\sup_{x\leq a_0^1\vee a_0^2+T,\theta\in\Theta}\sum_{k\geq 1}\left(R\varphi_k(x,\theta)\right)^2\right]\\
					&\leq 2C_{2,T}\E\left[\sup_{x\leq a_0^1\vee a_0^2+T,\theta\in\Theta}\int_{\Theta}\|D_{(0,\wt\theta),(x,\theta)}\|_{-m_d,\alpha}^2K(\theta,\wt\theta)\dP(\rd\wt\theta)\right]\\
					&\leq 2^{4\alpha}C_{2,T}C^2\left(1+2\E[a_0^{2\alpha}]+T^{2\alpha}\right),
				\end{aligned}
			\end{multline}
			where the second inequality follows from Proposition~\ref{VVM-CLT-prop-inq} and the last two lines from the fact that $(\varphi_k)_k$ is an orthonormal basis of $\mathcal{W}_0^{m_d,\alpha}$ and from Lemma~\ref{CLT-VVM-lem-inqf} and Assumption~\ref{V-As-2}.

			As in \eqref{VVM-CLT-eq-def-1''} we have,
			\begin{multline}
				\sum_{k\geq 1}\E \left[\left(R\varphi_k(a_1^N(t),\theta_1^N(t))-R\varphi_k(a_1(t),\theta_1(t))\right)^2\right]\\
				\begin{aligned}
				&=\E\left[\indic{a^N_1(t)\neq a_1(t)}\sup_{x,y\leq a_0^1+T,\theta,\theta'\in\Theta}\sum_{k\geq 1}\left|R\varphi_k(x,\theta)-R\varphi_k(y,\theta')\right|^2\right]\\
				&\leq \frac{2^{2\alpha}C^2}{\sqrt{N}}C_{1,T}\left(1+\E[a_0^{2\alpha}]+T^{2\alpha}\right).\label{VVM-CLT-eq-def-2'}
				\end{aligned}
			\end{multline}
			Consequently from \eqref{VVM-CLT-eq-def-1'''} ,\eqref{VVM-CLT-eq-def-1''}, \eqref{VVM-CLT-eq-def-2'} and Assumption~\ref{VVM-CLT-ass-a_0}, it follows that,
			\begin{equation}\label{VVM-CLT-eq-def-3'}
				\sup_{N\geq 1}\sup_{0\leq t\leq T} \E\left[\sum_{k\geq 1}\Xi^N_{k,3}(t)^2\right]<\infty.
			\end{equation}
			From \eqref{VVM-CLT-eq-mu-0'}, \eqref{VVM-CLT-eq-def-0'} and \eqref{VVM-CLT-eq-def-3'} we deduce \eqref{CLT-VVM-eqR_phi}.
			
		\end{proof}
		
		\begin{lemma}\label{CLT-lem-V}
			Under Assumption~\ref{V-As-1}, Assumption~\ref{V-As-2} and Assumption~\ref{VVM-CLT-ass-a_0}, for all $T\geq 0,$ and $\alpha>\frac{1}{2}$,
			\begin{equation}
				\sup_N\sup_{0\leq t\leq T}\E \left[\|V^N_t\|_{-m_d,\alpha}^2\right]<\infty.
			\end{equation}
		\end{lemma}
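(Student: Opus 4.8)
The plan is to mimic the proofs of Lemma~\ref{CLT-mart-W} and Lemma~\ref{CLT-VVM-lem-vf}. Fix an orthonormal basis $(\varphi_k)_{k\geq 1}$ of $\mathcal{W}^{m_d,\alpha}_0$; by Parseval's identity $\|V^N_t\|_{-m_d,\alpha}^2=\sum_{k\geq 1}\big(V^N_t(\varphi_k)\big)^2$, with $V^N_t(\varphi_k)=\langle\hat\mu^N_t,\lambda\rangle\langle\mu^N_t,R\varphi_k\rangle+\langle\mu_t,\lambda\rangle\langle\hat\mu^N_t,R\varphi_k\rangle$. The first move is to write $\mu^N_t=\mu_t+\tfrac1{\sqrt N}\hat\mu^N_t$ inside $\langle\mu^N_t,R\varphi_k\rangle$, which yields the three-term decomposition
\[V^N_t(\varphi_k)=\langle\hat\mu^N_t,\lambda\rangle\langle\mu_t,R\varphi_k\rangle+\tfrac1{\sqrt N}\langle\hat\mu^N_t,\lambda\rangle\langle\hat\mu^N_t,R\varphi_k\rangle+\langle\mu_t,\lambda\rangle\langle\hat\mu^N_t,R\varphi_k\rangle.\]
Using $(x+y+z)^2\le 3(x^2+y^2+z^2)$ and summing over $k$, it suffices to bound the expectations of the three resulting series uniformly in $N$ and $t\in[0,T]$.

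For the last two series, the point is that $\langle\mu^N_t,\lambda\rangle=\fF^N(t)\in[0,\lambda_*]$ and $\langle\mu_t,\lambda\rangle=\fF(t)\in[0,\lambda_*]$ (Assumption~\ref{V-As-1} and the fact that $\mu^N_t,\mu_t$ are probability measures), so that $|\langle\hat\mu^N_t,\lambda\rangle|\le\sqrt N\,\lambda_*$ \emph{deterministically}; consequently $\tfrac1N\langle\hat\mu^N_t,\lambda\rangle^2\le\lambda_*^2$ and $\langle\mu_t,\lambda\rangle^2\le\lambda_*^2$ almost surely. Hence each of these two series is bounded above by $\lambda_*^2\sum_{k\geq1}\langle\hat\mu^N_t,R\varphi_k\rangle^2$, whose expectation is bounded uniformly in $N$ and $t\le T$ by Lemma~\ref{CLT-VVM-lem-vf}.

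The first series is $\langle\hat\mu^N_t,\lambda\rangle^2\sum_{k\geq1}\langle\mu_t,R\varphi_k\rangle^2=\langle\hat\mu^N_t,\lambda\rangle^2\,\|R^\ast\mu_t\|_{-m_d,\alpha}^2$, where $R^\ast\mu_t$ denotes the linear functional $\varphi\mapsto\langle\mu_t,R\varphi\rangle$; the two factors are now respectively random and deterministic and are estimated separately. Arguing as in Lemma~\ref{CLT-VVM-lem-vf}, with $\gamma\le1$, $\int_\Theta K(\theta,\wt\theta)\nu(d\wt\theta)=1$ and $\|D_{(0,\wt\theta),(a,\theta)}\|_{-m_d,\alpha}\le C(1+a^\alpha)$ from Lemma~\ref{CLT-VVM-lem-inqf}, one obtains $\|R^\ast\mu_t\|_{-m_d,\alpha}\le C\big(1+\langle\mu_t,a^\alpha\rangle\big)=C\big(1+\E[a(t)^\alpha]\big)\le C\big(1+\E[(a_0+T)^\alpha]\big)$, which is finite and uniform in $t\le T$ under Assumption~\ref{VVM-CLT-ass-a_0}; and $\E\big[\langle\hat\mu^N_t,\lambda\rangle^2\big]=N\,\E\big[(\fF^N(t)-\fF(t))^2\big]\le C_{2,T}$ by Proposition~\ref{VVM-CLT-prop-inq} applied with $k=2$ (the deterministic bound $|\langle\hat\mu^N_t,\lambda\rangle|\le\sqrt N\lambda_*$ covering the finitely many $N<2$). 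Combining the three bounds proves the lemma. The step I expect to be the main obstacle is precisely this first term $\langle\hat\mu^N_t,\lambda\rangle\langle\mu^N_t,R\varphi_k\rangle$: a direct Cauchy--Schwarz on $\E\big[\langle\hat\mu^N_t,\lambda\rangle^2\sum_k\langle\mu^N_t,R\varphi_k\rangle^2\big]$ would require the fourth moment $\E[a_0^{4\alpha}]$, which is not available under Assumption~\ref{VVM-CLT-ass-a_0}; the decomposition above is exactly what replaces it by the elementary almost-sure bound $\tfrac1N\langle\hat\mu^N_t,\lambda\rangle^2\le\lambda_*^2$ coming from the boundedness of $\lambda$.
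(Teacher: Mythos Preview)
Your proof is correct, and it handles the delicate term $\langle\hat\mu^N_t,\lambda\rangle\langle\mu^N_t,R\varphi_k\rangle$ by a different decomposition than the paper.

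The paper keeps that term as is, applies Jensen to $\langle\mu^N_t,R\varphi_k\rangle^2\le\frac1N\sum_{j=1}^N R\varphi_k(a^N_j(t),\theta^N_j(t))^2$, uses exchangeability to reduce to individual~$1$, and then relies on the \emph{conditional} version of Proposition~\ref{VVM-CLT-prop-inq} to separate $\langle\hat\mu^N_t,\lambda\rangle^2$ from the factor $1+(a_0^1)^{2\alpha}+T^{2\alpha}$ coming from Lemma~\ref{CLT-VVM-lem-inqf}. Your route instead substitutes $\mu^N_t=\mu_t+N^{-1/2}\hat\mu^N_t$, which produces a term with the \emph{deterministic} factor $\sum_k\langle\mu_t,R\varphi_k\rangle^2=\|R^\ast\mu_t\|_{-m_d,\alpha}^2$ (bounded via Lemma~\ref{CLT-VVM-lem-inqf} and $\dE[a_0^{2\alpha}]<\infty$) together with a cross term killed by the almost-sure bound $N^{-1}\langle\hat\mu^N_t,\lambda\rangle^2\le\lambda_*^2$ and Lemma~\ref{CLT-VVM-lem-vf}. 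The upshot is that you only need the \emph{unconditional} second moment bound $\dE[(\fF^N(t)-\fF(t))^2]\le C_{2,T}N^{-1}$ from Proposition~\ref{VVM-CLT-prop-inq}, and you bypass the exchangeability argument. Both proofs rest on the same three lemmas; yours trades the conditional-expectation trick for a slightly cleverer algebraic splitting.
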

		\begin{proof}
			Let $(\varphi_k)_{k\geq 1}$ be an orthonormal basis of $\mathcal{W}^{m_d,\alpha}_0$. By Parseval's identity
			\begin{align*}
				\|V^N_t\|_{-m_d,\alpha}^2&=\sum_{k\geq 1}\left(V^N_t(\varphi_k)\right)^2\\
				&\leq 2\sum_{k\geq 1}\left(\langle\hat{\mu}^N_t,\lambda\rangle^2\langle\mu^N_t, R\varphi_k\rangle^2+\langle\mu_t,\lambda\rangle^2\langle\hat\mu^N_t, R\varphi_k\rangle^2\right)\\
				&\leq2\langle\hat{\mu}^N_t,\lambda\rangle^2\sum_{k\geq 1}\left(\frac{1}{N}\sum_{j=1}^NR\varphi_k(a^N_j(t),\theta^N_j(t))^2\right)+2\langle\mu_t,\lambda\rangle^2\sum_{k\geq 1}\langle\hat\mu^N_t, R\varphi_k\rangle^2.
			\end{align*}
			As $\langle\mu_t,\lambda\rangle\leq\lambda_*,$ from Lemma~\ref{CLT-VVM-lem-vf},
			\begin{equation*}
				\sup_{N\geq 1}\sup_{0\leq t\leq T}\E\left[\langle\mu_t,\lambda\rangle^2\sum_{k\geq 1}\langle\hat\mu^N_t, R\varphi_k\rangle^2\right]<\infty.
			\end{equation*}
			To conclude, it remains to show that
			\begin{equation}\label{eq-hF}
				\sup_{N\geq 1}\sup_{0\leq t\leq T}\E\left[\langle\hat{\mu}^N_t,\lambda\rangle^2\sum_{k\geq 1}\left(\frac{1}{N}\sum_{j=1}^NR\varphi_k(a^N_j(t),\theta^N_j(t))^2\right)\right]<\infty.
			\end{equation} 
			As the family $(a_j(t),\theta_j(t))_j$ are exchangeable, from the expression of $R\varphi_k$ and the fact that $\gamma\leq1$ (Assumption~\ref{V-As-1}) and  $\int_{\Theta}K(\cdot,\wt\theta)\dP(d\wt\theta)=1$ (Assumption~\ref{V-As-2}), it follows that

				\begin{align*}
				&\E\left[\langle\hat{\mu}^N_t,\lambda\rangle^2\sum_{k\geq 1}\left(\frac{1}{N}\sum_{j=1}^NR\varphi_k(a^N_j(t),\theta^N_j(t))^2\right)\right]\\
				&\quad=\E\left[\langle\hat{\mu}^N_t,\lambda\rangle^2\sum_{k\geq 1}R\varphi_k(a^N_1(t),\theta^N_1(t))^2\right]\\
				&\quad\leq\E\left[\langle\hat{\mu}^N_t,\lambda\rangle^2\sup_{y\leq a_0^1+T,\theta,\theta'\in\Theta}\sum_{k\geq 1}\left|\varphi_k(0,\theta)-\varphi_k(y,\theta')\right|^2\right]\\
				&\quad=\E\left[\langle\hat{\mu}^N_t,\lambda\rangle^2\sup_{y\leq a_0^1+T,\theta,\theta'\in\Theta}\left\|D_{(0,\theta),(y,\theta')}\right\|_{-m_d,\alpha}^2\right]\\
				&\quad\leq2^{2\alpha}C^2\E\left[\langle\hat{\mu}^N_t,\lambda\rangle^2(1+a_0^{2\alpha}+T^{2\alpha})\right]\\
				&\quad=2^{2\alpha}C^2\E\left[(1+a_0^{2\alpha}+T^{2\alpha})\E\left[\langle\hat{\mu}^N_t,\lambda\rangle^2\big|a_0\right]\right]\\
				&\quad\leq2^{2\alpha}C^2C_{2,T}(1+\E[a_0^{2\alpha}]+T^{2\alpha}),
				\end{align*}
			where the last lines follow from Lemma~\ref{CLT-VVM-lem-inqf} and Proposition~\ref{VVM-CLT-prop-inq} given that $\langle\hat{\mu}^N_t,\lambda\rangle=\sqrt{N}\left(\fF^N(t)-\fF(t)\right)$.
			
			This concludes \eqref{eq-hF}.
		\end{proof}
		\begin{remark}\label{CLT-rq-th-0}
			Note that for $\varphi\in\mathcal{W}_0^{m_d+1,\alpha},\,\|L(\varphi)\|_{m_d,\alpha}=\|\partial_a\varphi\|_{m_d,\alpha}\leq\|\varphi\|_{m_d+1,\alpha}.$ This implies that for all $\phi\in\mathcal{W}_0^{-(m_d+1),\alpha},\,\|L^*(\phi)\|_{-(m_d+1),\alpha}\leq\|\phi\|_{-m_d,\alpha},$ where $L^*$ is the adjoint operator of $L$.

			Indeed, \[\|L^*(\phi)\|_{-(m_d+1),\alpha}\leq\|\phi\|_{-m_d,\alpha}\sup_{\|\varphi\|_{m_d+1,\alpha}=1}\|L(\varphi)\|_{m_d,\alpha}\leq\|\phi\|_{-m_d,\alpha}.\]
		\end{remark}
		We may consider the following decomposition in $\mathcal{W}_0^{m_d+1,\alpha}$,
		\begin{equation}\label{CLT-expr-d}
			\hat{\mu}^N_t=\hat{\mu}^N_0+\int_0^t L^*(\hat{\mu}^N_s)ds+\int_0^t V^N_s ds+ W^N_t.
		\end{equation}

		\begin{prop}\label{CLT-tight-VVM}
			Under Assumption~\ref{V-As-1}, Assumption~\ref{V-As-2} and Assumption~\ref{VVM-CLT-ass-a_0}, for every $\alpha>\frac{1}{2},$ the sequence of the laws of $(W^N)_N$ is tight in the space $D(\R_+,\mathcal{W}_0^{-m_d,\alpha})$ and $(\hat{\mu}^N)_N$ in the space $D(\R_+,\mathcal{W}_0^{-(m_d+1),\alpha})$. 
		\end{prop}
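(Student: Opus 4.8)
The plan is to apply the Aldous--Joffe--Métivier tightness criterion for Hilbert-space-valued càdlàg processes (Definition~\ref{Def-tight}) twice: to $W^N$ in $D(\R_+,\mathcal{W}_0^{-m_d,\alpha})$ and to $\hat\mu^N$ in $D(\R_+,\mathcal{W}_0^{-(m_d+1),\alpha})$, exploiting the semimartingale decomposition \eqref{CLT-expr-d}. In each case two things must be checked: a compact-containment (fixed-time tightness) property, and an Aldous estimate on increments taken over pairs of stopping times.

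\textbf{The martingale $W^N$.} For the fixed-time part I would show that the quadratic variation \eqref{CLT-eq-W_q} is, uniformly in $N$, trace class on $\mathcal{W}_0^{-m_d,\alpha}$. Fixing an orthonormal basis $(\varphi_k)_k$ of $\mathcal{W}_0^{m_d,\alpha}$ and writing the bracket through $R^{(2)}$, one gets $\sum_k R^{(2)}\varphi_k(a,\theta)=\int_\Theta\gamma(a,\theta)K(\theta,\widetilde\theta)\,\|D_{(0,\widetilde\theta),(a,\theta)}\|_{-m_d,\alpha}^2\,\nu(\rd\widetilde\theta)\le C(1+a^{2\alpha})$ by Lemma~\ref{CLT-VVM-lem-inqf} and Assumptions~\ref{V-As-1}--\ref{V-As-2}, whence $\E[\sum_k\ll W^N\gg_T(\varphi_k)]\le C\lambda_\ast\int_0^T\E[\langle\mu^N_s,1+a^{2\alpha}\rangle]\,\rd s<\infty$ by Assumption~\ref{VVM-CLT-ass-a_0}; using $\E[(W^N_t(\varphi_k))^2]=\E[\ll W^N\gg_t(\varphi_k)]$ together with a truncation $\{a\le A\}$ versus $\{a>A\}$ (the latter controlled by the uniform integrability coming from $\E[a_0^{2\alpha}]<\infty$, the former by dominated convergence and the continuity of $D_{\cdot,\cdot}$) gives $\lim_{M\to\infty}\sup_N\E[\sum_{k>M}(W^N_T(\varphi_k))^2]=0$, hence tightness of $(W^N_t)_N$ in $\mathcal{W}_0^{-m_d,\alpha}$. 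For the Aldous estimate, optional sampling and the same trace bound give, for stopping times $S\le(S+u)\wedge T$,
\[
\E\big[\|W^N_{(S+u)\wedge T}-W^N_S\|_{-m_d,\alpha}^2\,\big|\,\mathcal F_S\big]=\E\big[\mathrm{Tr}\,(\ll W^N\gg_{(S+u)\wedge T}-\ll W^N\gg_S)\,\big|\,\mathcal F_S\big]\le C'u ,
\]
uniformly in $N$ and $S$; Chebyshev's inequality then closes the argument, the jumps of $W^N$ being $O(1/\sqrt N)$ and harmless. This yields tightness of $(W^N)_N$ in $D(\R_+,\mathcal{W}_0^{-m_d,\alpha})$.

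\textbf{The fluctuation $\hat\mu^N$.} Here I would use \eqref{CLT-expr-d}. The transport operator $L^{*}$ costs exactly one Sobolev derivative: $\|L^{*}(\hat\mu^N_s)\|_{-(m_d+1),\alpha}\le\|\hat\mu^N_s\|_{-m_d,\alpha}$ by Remark~\ref{CLT-rq-th-0}, which is precisely why tightness of $\hat\mu^N$ is sought in the weaker space $\mathcal{W}_0^{-(m_d+1),\alpha}$. Fixed-time tightness of $(\hat\mu^N_t)_N$ in $\mathcal{W}_0^{-(m_d+1),\alpha}$ follows from the uniform bound $\sup_{N,t\le T}\E[\|\hat\mu^N_t\|_{-m_d,\alpha}^2]<\infty$ of Lemma~\ref{CLT-lem-eta}, the Hilbert--Schmidt (hence compact) embeddings \eqref{CLT-VVM-injection-1}, and Markov's inequality. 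For the Aldous estimate, by the triangle inequality, Remark~\ref{CLT-rq-th-0} and Cauchy--Schwarz in time,
\[
\E\Big[\Big\|\int_S^{(S+u)\wedge T}\!\!\big(L^{*}(\hat\mu^N_s)+V^N_s\big)\,\rd s\Big\|_{-(m_d+1),\alpha}\Big]\le u\Big(\sup_{s\le T}\E[\|\hat\mu^N_s\|_{-m_d,\alpha}^2]^{1/2}+\sup_{s\le T}\E[\|V^N_s\|_{-m_d,\alpha}^2]^{1/2}\Big),
\]
which tends to $0$ with $u$ uniformly in $N$ by Lemmas~\ref{CLT-lem-eta} and \ref{CLT-lem-V}; combining this with the increment bound for $W^N$ above (valid a fortiori since $\|\cdot\|_{-(m_d+1),\alpha}\le\|\cdot\|_{-m_d,\alpha}$) and Chebyshev's inequality gives the Aldous condition, hence tightness of $(\hat\mu^N)_N$ in $D(\R_+,\mathcal{W}_0^{-(m_d+1),\alpha})$.

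\textbf{Main obstacle.} I expect the delicate point to be the fixed-time tightness of the martingale part -- specifically, proving that the orthonormal-basis tail of $\ll W^N\gg_T$ vanishes uniformly in $N$ -- since this requires combining the continuity and growth estimates for $\delta_x$ and $D_{x,y}$ from Lemma~\ref{CLT-VVM-lem-inqf}, a truncation in the age variable, and the uniform integrability furnished by $\E[a_0^{2\alpha}]<\infty$ to dominate the mass placed by $\mu^N_s$ at large ages, all uniformly over $N$. The accompanying bookkeeping of Sobolev indices, needed so that the compact-embedding argument lands the fixed-time tightness exactly in $\mathcal{W}_0^{-(m_d+1),\alpha}$ rather than in a coarser space, is the other point requiring care.
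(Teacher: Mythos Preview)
Your proposal is correct and essentially parallels the paper's argument: same decomposition \eqref{CLT-expr-d}, same ingredients (Lemma~\ref{CLT-lem-eta} and Remark~\ref{CLT-rq-th-0} for $L^*\hat\mu^N$, Lemma~\ref{CLT-lem-V} for $V^N$, Lemma~\ref{CLT-mart-W} for $W^N$), and the same loss of one Sobolev index explaining why $\hat\mu^N$ lives in $\mathcal W_0^{-(m_d+1),\alpha}$. The treatment of $\hat\mu^N$ is virtually identical to the paper's.

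The one genuine methodological difference concerns the martingale part. You verify the two pieces of the Aldous criterion for $W^N$ directly, and you flag as the main obstacle the fixed-time compact containment in $\mathcal W_0^{-m_d,\alpha}$ via a uniform orthonormal-basis tail estimate. The paper instead invokes the Joffe--M\'etivier criterion for Hilbert-valued square-integrable martingales (\cite[p.~40]{joffe1986weak}): tightness of $(W^N)_N$ in $D(\R_+,\mathcal W_0^{-m_d,\alpha})$ is equivalent to tightness of the real-valued increasing process $Tr(\ll W^N\gg)$ in $D(\R_+,\R)$. This reduces everything to the single trace-increment bound
\[
\E\big[\big|Tr(\ll W^N\gg)_{\tau^N+\delta}-Tr(\ll W^N\gg)_{\tau^N}\big|\big]\le C\delta'(1+\E[a_0^{2\alpha}]+T^{2\alpha}),
\]
which is exactly your computation $\sum_k R^{(2)}\varphi_k(a,\theta)\le C(1+a^{2\alpha})$ integrated over time, together with the moment bound \eqref{CLT-VVM-in-M}. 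So the calculation you identify as the hardest step---controlling the orthonormal tail uniformly in $N$ via a truncation in the age variable and uniform integrability---is simply not needed in the paper's route: the martingale criterion absorbs it. Your approach is more self-contained (it does not rely on a black-box martingale criterion) but correspondingly heavier; the paper's is shorter but leans on the cited equivalence.
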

		\begin{proof}
			$(W^N)_N$ is tight if and only if $Tr(\ll W^N\gg)$ is tight, where $Tr(\ll W^N\gg)$ is the trace of $\ll W^N\gg$ given by \eqref{CLT-eq-W_q} (see \cite[page~$40$]{joffe1986weak}). 
			
			We have 
			\begin{equation*}
				Tr(\ll W^N\gg)_t=\sum_{k\geq 1}\int_{0}^{t}\langle \mu_s^N,\lambda\rangle\langle \mu_s^N,R^{(2)}\varphi_k\rangle ds.
			\end{equation*}
			However for $r\leq T,$ as $\gamma\leq 1,$ from Lemma~\ref{CLT-VVM-lem-inqf}, we have,
			\begin{align*}
				\sum_{k\geq 1}\langle \mu_r^N,R^{(2)}\varphi_k\rangle&=\frac{1}{N}\sum_{k\geq 1}\sum_{j=1}^N R^{(2)}\varphi_k(a^N_j(r),\theta_j^N(r))\\
				&=\frac{1}{N}\sum_{k\geq 1}\sum_{j=1}^N \int_{\Theta}\left(\varphi_k(0,\wt\theta)-\varphi_k(a^N_j(r),\theta_j^N(r))\right)^2\gamma(a^N_j(r),\theta_j^N(r))K(\theta^N_j(r),\wt\theta)\nu(d\wt\theta)\\
				&\leq \frac{1}{N}\sum_{j=1}^N \int_{\Theta}\|D_{(0,\wt\theta),(a^N_j(r),\theta_j^N(r))}\|_{-m_d,\alpha}^2K(\theta^N_j(r),\wt\theta)\nu(d\wt\theta)\\
				&\leq 2^{2\alpha}C^2\left(1+\frac{1}{N}\sum_{j=1}^N(a_0^j)^{2\alpha}+T^{2\alpha}\right).
			\end{align*}
			Consequently for all stopping time $(\tau^N) $ such that $ \tau^N\leq T, 0\leq \delta\leq\delta',$
			\begin{align*}
				\left|Tr(\ll W^N\gg)_{\tau^N+\delta}-Tr(\ll W^N\gg)_{\tau^N}\right|&=\left|\int_{\tau^N}^{\tau^N+\delta}\langle \mu_r^N,\lambda\rangle\sum_{k\geq 1}\langle \mu_r^N,R^{(2)}\varphi_k\rangle dr\right|\nonumber\\
				&\leq 2^{2\alpha}\lambda_*\delta' C^2\left(1+\frac{1}{N}\sum_{j=1}^N(a_0^j)^{2\alpha}+T^{2\alpha}\right)
			\end{align*}
			where the last line follows from the fact that $\langle \mu_r^N,\lambda\rangle\leq\lambda_*$.
			
			Therefore
			\begin{equation}\label{CLT-VVM-in-M-tight}
				\sup_N\sup_{\delta\leq\delta'}\E \left[\left|Tr(\ll W^N\gg)_{\tau^N+\delta}-Tr(\ll W^N\gg)_{\tau^N}\right|\right]\leq 2^{2\alpha}\lambda_*\delta' C^2(1+\E[a^{2\alpha}_0]+T^{2\alpha}).
			\end{equation}
			Gathering \eqref{CLT-VVM-in-M} with \eqref{CLT-VVM-in-M-tight} it follows that the sequence of the laws of $(W^N)_N$ is tight in $D(\R_+,\mathcal{W}_0^{-m_d,\alpha})$ according to Definition~\ref{Def-tight}.
			
			On the other hand, to prove the tightness of $(\hat{\mu}^N)_N$, as $(W^N)_N$ is tight, from \eqref{CLT-expr-d} it remains to prove that $\int_0^t L^*(\hat{\mu}^N_s)ds+\int_0^t V^N_s ds$ is tight.
			
			Indeed, we have 
			\begin{align*}
				\left\|\int_{\tau^N}^{\tau^N+\delta} \left[L^*(\hat{\mu}^N_s)+V^N_s\right] ds\right\|_{-(m_d+1),\alpha}^2&\leq  2\delta\int_{\tau^N}^{\tau^N+\delta}\left( \|L^*(\hat{\mu}^N_s)\|_{-(m_d+1),\alpha}^2+ \|V^N_s\|_{-(m_d+1),\alpha}^2\right)ds\\
				&\leq 2\delta'\int_{0}^{T+\delta'} \left(\|\hat{\mu}^N_s\|_{-m_d,\alpha}^2+ \|V^N_s\|_{-(m_d+1),\alpha}^2\right)ds,
			\end{align*}
			where the last line follows from Remark~\ref{CLT-rq-th-0}.
			
			Hence from Lemma~\ref{CLT-lem-eta}, and Lemma~\ref{CLT-lem-V}, it follows that,
			\begin{equation*}
				\sup_N\sup_{\delta\leq\delta'}\E\left[\left\|\int_{\tau^N}^{\tau^N+\delta} \left[L^*(\hat{\mu}^N_s)+V^N_s\right] ds\right\|_{-(m_d+1),\alpha}^2\right]< C\delta'(T+\delta').
			\end{equation*}
			This concludes the proof.
		\end{proof}
		
		\subsection{Characterization of the Limit of the subsequences }\label{sec-ch-limits}
			\begin{lemma}\label{CLT-boch-integr}
			Under Assumption~\ref{V-As-1}, Assumption~\ref{V-As-2} and Assumption~\ref{VVM-CLT-ass-a_0}, the integrals $\int_0^t L^*(\hat{\mu}^N_s)ds$ and $\int_0^t V^N_s ds$ are almost surely well defined as Bochner integrals in $\mathcal{W}_0^{-(m_d+1),\alpha}$ for any $\alpha>\frac{1}{2}$. In particular, the functions $t\mapsto \int_0^t L^*(\hat{\mu}^N_s)ds$ and $t\mapsto \int_0^t V^N_s ds$ are almost surely strongly continuous in $\mathcal{W}_0^{-(m_d+1),\alpha}$. 
		\end{lemma}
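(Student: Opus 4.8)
The plan is to verify, for each of the two integrands viewed as maps from $[0,T]$ into the separable Hilbert space $\mathcal{W}_0^{-(m_d+1),\alpha}$, the two hypotheses of Bochner's theorem — strong measurability and integrability of the norm — and then to read off the strong continuity of the indefinite integral from its absolute continuity. All the quantitative estimates needed are already in place: Remark~\ref{CLT-rq-th-0} controls $L^*$, Lemma~\ref{CLT-lem-eta} controls $\|\hat{\mu}^N_s\|_{-m_d,\alpha}$, and Lemma~\ref{CLT-lem-V} controls $\|V^N_s\|_{-m_d,\alpha}$, so the argument is essentially a packaging of these.

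First I would treat strong measurability. For a fixed realization, $s\mapsto\langle\hat{\mu}^N_s,\varphi\rangle$ is c\`adl\`ag for every smooth $\varphi$ (it is a finite combination of $\varphi$ evaluated along c\`adl\`ag age--trait paths, minus $\sqrt N\langle\mu_s,\varphi\rangle$), and by Lemma~\ref{CLT-VVM-lem-inqf} each $\hat{\mu}^N_s$ lies in $\mathcal{W}_0^{-m_d,\alpha}$; since that space is separable, the Pettis measurability theorem turns this weak measurability into strong measurability, and composing with the bounded operator $L^*:\mathcal{W}_0^{-m_d,\alpha}\to\mathcal{W}_0^{-(m_d+1),\alpha}$ (Remark~\ref{CLT-rq-th-0}) preserves it. The same scheme applies to $s\mapsto V^N_s$: for each $\varphi$, $s\mapsto V^N_s(\varphi)=\langle\hat{\mu}^N_s,\lambda\rangle\langle\mu^N_s,R\varphi\rangle+\langle\mu_s,\lambda\rangle\langle\hat{\mu}^N_s,R\varphi\rangle$ is a measurable function of $s$ (the age--trait paths are measurable processes and $\lambda,\gamma,K$ are Borel), while $\|V^N_s\|_{-m_d,\alpha}$ is a.s.\ finite for each $s$ by Lemma~\ref{CLT-lem-V}, so Pettis applies again.

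Next I would bound the norm. Remark~\ref{CLT-rq-th-0} gives $\|L^*(\hat{\mu}^N_s)\|_{-(m_d+1),\alpha}\le\|\hat{\mu}^N_s\|_{-m_d,\alpha}$, and the continuous embedding $\mathcal{W}_0^{-m_d,\alpha}\hookrightarrow\mathcal{W}_0^{-(m_d+1),\alpha}$ gives $\|V^N_s\|_{-(m_d+1),\alpha}\le\|V^N_s\|_{-m_d,\alpha}$; by the Cauchy--Schwarz inequality in the time variable, $\E\int_0^T\|L^*(\hat{\mu}^N_s)\|_{-(m_d+1),\alpha}\,ds\le\int_0^T\big(\E\|\hat{\mu}^N_s\|_{-m_d,\alpha}^2\big)^{1/2}\,ds$ and $\E\int_0^T\|V^N_s\|_{-(m_d+1),\alpha}\,ds\le\int_0^T\big(\E\|V^N_s\|_{-m_d,\alpha}^2\big)^{1/2}\,ds$, and both right-hand sides are finite by Lemma~\ref{CLT-lem-eta} and Lemma~\ref{CLT-lem-V}. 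Hence the two pathwise integrals $\int_0^T\|L^*(\hat{\mu}^N_s)\|_{-(m_d+1),\alpha}\,ds$ and $\int_0^T\|V^N_s\|_{-(m_d+1),\alpha}\,ds$ are almost surely finite, so the Bochner integrals are well defined in $\mathcal{W}_0^{-(m_d+1),\alpha}$; strong continuity is then automatic, since for $t\le t'$ and either integrand $X^N$ one has $\big\|\int_t^{t'}X^N_s\,ds\big\|_{-(m_d+1),\alpha}\le\int_t^{t'}\|X^N_s\|_{-(m_d+1),\alpha}\,ds$, which vanishes as $t'\to t$ by absolute continuity of the Lebesgue integral of an a.s.\ integrable function.

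The only point that is not pure bookkeeping is the passage from scalar (weak) measurability of the trajectories to strong measurability in the infinite-dimensional target; this is exactly where separability of the weighted Sobolev spaces and the Pettis measurability theorem enter, and it is the step I would spell out most carefully.
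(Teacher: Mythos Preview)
Your proof is correct and follows essentially the same route as the paper: verify weak measurability in the separable space $\mathcal{W}_0^{-(m_d+1),\alpha}$ (the paper cites Yosida, you invoke Pettis), then establish a.s.\ integrability of the norm via Remark~\ref{CLT-rq-th-0}, Lemma~\ref{CLT-lem-eta}, and Lemma~\ref{CLT-lem-V}, from which the Bochner integrability and strong continuity follow. Your write-up is in fact more explicit than the paper's on both the measurability step and the absolute-continuity argument for strong continuity.
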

		\begin{proof} We adapt the proof of \cite[Lemma~$4.9$]{chevallier_fluctuations_2017}. 
			Since $\mathcal{W}_0^{-(m_d+1),\alpha}$ is separable, it suffices to verify that (see Yosida \cite[Theorem~$1$, page $133$]{yosida2012functional} ): 
			\begin{itemize}
				\item For every $\varphi\in\mathcal{W}_0^{m_d+1,\alpha},$ the functions $t\mapsto\langle L^*\hat{\mu}^N_t,\varphi\rangle=\langle\hat{\mu}^N_t,L(\varphi)\rangle$ and $t\mapsto V^N_t(\varphi)$ are measurable. 
				\item  The integrals $\int_0^t \|L^*(\hat{\mu}^N_s)\|_{-(m_d+1),\alpha}ds$ and $\int_0^t \|V^N_s\|_{-(m_d+1),\alpha}ds$ are finite almost surely.
			\end{itemize} 
			The proof for the first integral follows from Remark~\ref{CLT-rq-th-0}, and Lemma~\ref{CLT-lem-eta} and for the second integral from Lemma~\ref{CLT-lem-V}.  
		\end{proof}
		We will use the following Corollary to characterize the space of the limits of the subsequences.
		\begin{coro}\label{CLT-Prop-eta-in}
			Under Assumption~\ref{V-As-1}, Assumption~\ref{V-As-2} and Assumption~\ref{VVM-CLT-ass-a_0}, for all $T\geq 0,$ and $\alpha>\frac{1}{2},$
			\begin{equation}\label{CLT-prop-eta}
				\sup_N\E\left[\sup_{0\leq t\leq T}\|\hat{\mu}^N_{t}\|_{-(m_d+1),\alpha}^2\right]<\infty,
			\end{equation}
			and $t\mapsto \hat{\mu}^N_{t}$ belongs to $D(\R_+,\mathcal{W}_0^{-(m_d+1),\alpha})$.
		\end{coro}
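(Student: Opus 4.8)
The plan is to read the claim off the decomposition \eqref{CLT-expr-d}, using the continuous embedding $\mathcal{W}_0^{-m_d,\alpha}\hookrightarrow\mathcal{W}_0^{-(m_d+1),\alpha}$ to transfer the uniform bounds already obtained in the larger norm $\|\cdot\|_{-m_d,\alpha}$ to the norm $\|\cdot\|_{-(m_d+1),\alpha}$. First I would apply the triangle inequality in $\mathcal{W}_0^{-(m_d+1),\alpha}$ to \eqref{CLT-expr-d}, then $(x_1+x_2+x_3+x_4)^2\leq 4\sum_{i=1}^4 x_i^2$, the embedding on each summand, and Remark~\ref{CLT-rq-th-0} on the drift term (to replace $\|L^*(\hat\mu^N_s)\|_{-(m_d+1),\alpha}$ by $\|\hat\mu^N_s\|_{-m_d,\alpha}$), to get, for $t\leq T$,
\begin{multline*}
\|\hat\mu^N_t\|_{-(m_d+1),\alpha}^2\leq 4\|\hat\mu^N_0\|_{-m_d,\alpha}^2+4\Big(\int_0^T\|\hat\mu^N_s\|_{-m_d,\alpha}\,ds\Big)^2\\
+4\Big(\int_0^T\|V^N_s\|_{-m_d,\alpha}\,ds\Big)^2+4\sup_{0\leq r\leq T}\|W^N_r\|_{-m_d,\alpha}^2.
\end{multline*}
The right-hand side does not depend on $t$, so taking $\sup_{t\leq T}$ is free; it then remains to bound the expectation of each of the four terms uniformly in $N$.

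For the initial term this is Lemma~\ref{CLT-lem-eta} at $t=0$. For the two integral terms, Cauchy--Schwarz in time gives $(\int_0^T f_s\,ds)^2\leq T\int_0^T f_s^2\,ds$, and then Fubini together with Lemma~\ref{CLT-lem-eta} (resp. Lemma~\ref{CLT-lem-V}) bounds their expectations by $T^2\sup_{0\leq s\leq T}\E[\|\hat\mu^N_s\|_{-m_d,\alpha}^2]$ (resp. $T^2\sup_{0\leq s\leq T}\E[\|V^N_s\|_{-m_d,\alpha}^2]$), both finite uniformly in $N$. The martingale term is handled directly by Lemma~\ref{CLT-mart-W}, which already provides $\sup_N\E[\sup_{0\leq r\leq T}\|W^N_r\|_{-m_d,\alpha}^2]<\infty$. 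Summing the four bounds yields \eqref{CLT-prop-eta}. For the path regularity, $t\mapsto\hat\mu^N_0$ is constant, $t\mapsto\int_0^tL^*(\hat\mu^N_s)\,ds$ and $t\mapsto\int_0^tV^N_s\,ds$ are strongly continuous in $\mathcal{W}_0^{-(m_d+1),\alpha}$ by Lemma~\ref{CLT-boch-integr}, and $W^N\in D(\R_+,\mathcal{W}_0^{-m_d,\alpha})$ by Lemma~\ref{CLT-mart-W}, which embeds continuously into $D(\R_+,\mathcal{W}_0^{-(m_d+1),\alpha})$; hence $\hat\mu^N\in D(\R_+,\mathcal{W}_0^{-(m_d+1),\alpha})$ by \eqref{CLT-expr-d}.

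I do not expect a genuine obstacle: the corollary is an assembly of Lemmas~\ref{CLT-lem-eta}, \ref{CLT-mart-W}, \ref{CLT-lem-V} and~\ref{CLT-boch-integr}. The only point that needs a little care is the interplay between the supremum over $[0,T]$ and the expectation: for the two Bochner-integral terms one may first bound $\int_0^t\leq\int_0^T$ and only afterwards take $\E$, so no maximal inequality is needed there, whereas for the martingale term one genuinely needs $\E[\sup_{0\leq t\leq T}\|W^N_t\|^2_{-m_d,\alpha}]$, which is precisely why Lemma~\ref{CLT-mart-W} was proved with Doob's maximal inequality rather than with a plain $L^2$ estimate.
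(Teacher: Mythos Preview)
Your proof is correct and follows essentially the same approach as the paper: decompose via \eqref{CLT-expr-d}, apply the convexity inequality $(\sum_{i=1}^4 x_i)^2\leq 4\sum x_i^2$, use Cauchy--Schwarz in time on the two Bochner integrals together with Remark~\ref{CLT-rq-th-0}, and then invoke Lemmas~\ref{CLT-lem-eta}, \ref{CLT-mart-W}, \ref{CLT-lem-V} for the moment bound and Lemmas~\ref{CLT-boch-integr}, \ref{CLT-mart-W} for the c\`adl\`ag property. Your remark on why the martingale term is the only one requiring a genuine maximal inequality is a nice clarification that the paper leaves implicit.
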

		\begin{proof}
			From \eqref{CLT-expr-d} by convexity we have, 
			\begin{align*}
				\sup_{0\leq t\leq T}\|\hat{\mu}^N_{t}\|_{-(m_d+1),\alpha}^2&\leq 4\|\hat{\mu}^N_{0}\|_{-(m_d+1),\alpha}^2+4T\int_0^T \|L^*(\hat{\mu}^N_s)\|_{-(m_d+1),\alpha}^2ds\\&\hspace{2cm}+4T\int_0^T \|V^N_s)\|_{-(m_d+1),\alpha}^2ds+4\sup_{0\leq t\leq T}\|W^N_{t}\|_{-(m_d+1),\alpha}^2.
			\end{align*}
			As a result from Lemma~\ref{CLT-lem-eta}, Remark~\ref{CLT-rq-th-0}, Lemma~\ref{CLT-lem-V} and Lemma~\ref{CLT-mart-W}, we deduce \eqref{CLT-prop-eta}.
			
			The fact that $ \hat{\mu}^N_{t}$ is cadlag follows from the continuity of the integral in Lemma~\ref{CLT-boch-integr} and the fact that $W^N_t$ is cadlag in Lemma~\ref{CLT-mart-W}.
		\end{proof}
		
		The following Proposition states that all the limits of any converging subsequence, of $(W^N)_N$ belong to  $\mathcal{W}^{-m_d,\alpha}_0$ and are continuous, while those for $(\hat{\mu}^N)_N$ to $\mathcal{W}^{-(m_d+1),\alpha}_0$ are continuous.
		\begin{prop}\label{CLT-mart-w} For $\alpha>\frac{1}{2}$, every limit $\wt W$ of the sequence $(W^N)_N$ in $D(\R_+,\mathcal{W}_0^{-m_d,\alpha})$ and $\wt{\mu}$ of the sequence $(\hat{\mu}^N)_N$ in $D(\R_+,\mathcal{W}_0^{-(m_d+1),\alpha})$ satisfies
			\begin{equation}\E \left[\sup_{0\leq t\leq T}\|\wt W_t\|_{-m_d,\alpha}^2\right]<\infty\text{ and }\E \left[\sup_{0\leq t\leq T}\|\wt{\mu}_t\|_{-(m_d+1),\alpha}^2\right]<\infty.\label{CLT-c-tight}\end{equation}
			Moreover, the limit laws are continuous. 
		\end{prop}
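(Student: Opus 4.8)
The plan is to transfer to the limit the uniform second–moment bounds of Lemma~\ref{CLT-mart-W} and Corollary~\ref{CLT-Prop-eta-in} by means of the Skorohod representation theorem, and to establish path–continuity of the limits by showing that the jumps of the prelimit processes $W^N$ and $\hat{\mu}^N$ vanish uniformly as $N\to\infty$.

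For the moment bounds, fix the converging subsequence and apply the Skorohod representation theorem --- both $D(\R_+,\mathcal{W}_0^{-m_d,\alpha})$ and $D(\R_+,\mathcal{W}_0^{-(m_d+1),\alpha})$ being Polish --- to realise, on one probability space, copies $\bar W^N\deq W^N$ and $\bar\mu^N\deq\hat{\mu}^N$ converging almost surely to copies $\bar W\deq\wt W$ and $\bar\mu\deq\wt\mu$. The functional $x\mapsto\sup_{0\le t\le T}\|x_t\|$ is lower semicontinuous for the Skorohod $J_1$ topology, so Fatou's lemma gives
\[\E\Bigl[\sup_{0\le t\le T}\|\bar W_t\|_{-m_d,\alpha}^2\Bigr]\le\liminf_{N}\E\Bigl[\sup_{0\le t\le T}\|W^N_t\|_{-m_d,\alpha}^2\Bigr]<\infty\]
by Lemma~\ref{CLT-mart-W}, and in the same way $\E[\sup_{0\le t\le T}\|\bar\mu_t\|_{-(m_d+1),\alpha}^2]<\infty$ by Corollary~\ref{CLT-Prop-eta-in}. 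Since these quantities depend only on the laws, \eqref{CLT-c-tight} follows.

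For the continuity statement, recall from \eqref{Mart-W} and the decomposition \eqref{CLT-expr-d} that every jump of $W^N$ and of $\hat{\mu}^N$ (the drift terms $\int_0^tL^*(\hat{\mu}^N_s)\,ds+\int_0^tV^N_s\,ds$ being strongly continuous by Lemma~\ref{CLT-boch-integr}) occurs at a (re-)infection time of some individual $k$, at which the corresponding functional jumps by $N^{-1/2}D_{(0,\wt\theta),(a^N_k(s^-),\theta^N_k(s^-))}$ (the compensator being absolutely continuous). By Lemma~\ref{CLT-VVM-lem-inqf} and the bound $a^N_k(s^-)\le a_0^k+T$ valid for $s\le T$,
\[\sup_{0\le t\le T}\|\Delta W^N_t\|_{-m_d,\alpha}\ \vee\ \sup_{0\le t\le T}\|\Delta\hat{\mu}^N_t\|_{-(m_d+1),\alpha}\ \le\ \frac{C}{\sqrt N}\Bigl(1+\bigl(\max_{1\le k\le N}a_0^k+T\bigr)^\alpha\Bigr).\]
A union bound together with Markov's inequality and Assumption~\ref{VVM-CLT-ass-a_0}-(1) shows that $N^{-1/2}(\max_{1\le k\le N}a_0^k)^\alpha\to0$ in probability, so the right-hand side tends to $0$ in probability; transferring this to the Skorohod copies and extracting a further subsequence, the maximal jump of $\bar W^N$, resp. of $\bar\mu^N$, on $[0,T]$ converges to $0$ almost surely. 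Since the maximal--jump functional is lower semicontinuous for $J_1$ (so that a $J_1$--limit of processes with vanishing jumps is continuous; compare the argument in \cite[Proposition~$4.7$]{chevallier_fluctuations_2017}), $\bar W$ and $\bar\mu$ have almost surely continuous paths on $[0,T]$, and $T$ being arbitrary the limit laws charge only continuous trajectories.

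The delicate point is the continuity statement: since the model does not assume bounded initial ages, $\max_{1\le k\le N}a_0^k$ cannot be bounded deterministically, and the decay of $N^{-1/2}(\max_{1\le k\le N}a_0^k)^\alpha$ must be drawn from the moment assumption $\E[a_0^{2\alpha}]<\infty$. This is precisely the role of Assumption~\ref{VVM-CLT-ass-a_0}-(1), and it also explains why the weight exponent $\alpha$ appearing in the Sobolev norms is the natural one here.
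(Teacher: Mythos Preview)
Your argument is essentially correct and, at the continuity step, actually more careful than the paper's. The paper asserts the deterministic bound
\[
\sup_{0\le t\le T}\|W^N_t-W^N_{t^-}\|_{-m_d,\alpha}\le \frac{C}{\sqrt N}
\]
and then invokes \cite[Theorem~13.4]{billingsley1999convergence}. But Lemma~\ref{CLT-VVM-lem-inqf} only gives $\|D_{(0,\wt\theta),(a,\theta)}\|_{-m_d,\alpha}\le C(1+a^\alpha)$, and since $a_k^N(s^-)$ can be as large as $a_0^k+T$, a deterministic bound is unavailable precisely because the paper does not assume bounded initial ages. Your route---bounding the maximal jump by $CN^{-1/2}(1+(\max_k a_0^k+T)^\alpha)$ and controlling $\max_k a_0^k$ through the moment hypothesis---is the right way to close the step, and your closing remark correctly identifies this as the place where Assumption~\ref{VVM-CLT-ass-a_0}-(1) enters.

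One genuine (though easily repaired) gap: ``union bound together with Markov's inequality'' as stated yields only
\[
\P\!\left(N^{-1/2}\Bigl(\max_{k\le N}a_0^k\Bigr)^\alpha>\epsilon\right)\le N\,\frac{\E[a_0^{2\alpha}]}{\epsilon^{2}N}=\frac{\E[a_0^{2\alpha}]}{\epsilon^{2}},
\]
which does not tend to $0$. What you need is the sharper consequence of $\E[a_0^{2\alpha}]<\infty$ that $t^{2\alpha}\P(a_0>t)\to0$ (equivalently $\E[a_0^{2\alpha}\indic{a_0>t}]\to0$), which then gives $N\,\P(a_0>\epsilon^{1/\alpha}N^{1/(2\alpha)})\to0$; or simply invoke the classical fact that $\E[Y^2]<\infty$ forces $N^{-1/2}\max_{k\le N}Y_k\to0$ a.s.\ via Borel--Cantelli, applied to $Y_k=(a_0^k)^\alpha$. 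With this amendment the argument is complete.

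For the moment bounds \eqref{CLT-c-tight}, your Skorohod--Fatou transfer and the paper's argument (continuity of $g\mapsto\sup_{[0,T]}\|g_t\|$ at points of $\mathcal{C}(\R_+,\mathcal{W}_0^{-k,\alpha})$, combined with the uniform estimates of Lemma~\ref{CLT-mart-W} and Corollary~\ref{CLT-Prop-eta-in}) are interchangeable; yours has the minor advantage of not requiring continuity of the limit to be established first.
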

		\begin{proof}
			From \cite[Theorem~$13.4$]{billingsley1999convergence} it suffices to prove that, for all $T\geq 0$, the maximal jump size of $W^N$ and $\hat{\mu}^N$ on $[0,T]$ converge to $0$ almost surely. So the result follows from the following inequality
			\begin{equation*}
				\sup_{\varphi\in\mathcal{C}^\infty_c,\,\|\varphi\|_{m_d+1,\alpha}=1}|\langle\hat{\mu}^N_t,\varphi\rangle-\langle\hat{\mu}^N_{t^-},\varphi\rangle|=\sup_{\varphi\in\mathcal{C}^\infty_c,\,\|\varphi\|_{m_d,\alpha}=1}|W^N_t(\varphi)-W^N_{t^-}(\varphi)|\leq\frac{C}{\sqrt{N}},
			\end{equation*}
			where we use the fact that, as $\alpha>\frac{1}{2},$ from \eqref{Injection-w}, $\mathcal{C}^{m_d}_b\subset \mathcal{W}^{m_d,\alpha}_0.$
			
			Consequently
			\[\sup_{0\leq t\leq T}\|\hat{\mu}^N_t-\hat{\mu}^N_{t^-}\|_{-(m_d+1),\alpha}=\sup_{0\leq t\leq T}\|W_t^N-W_{t^-}^N\|_{-m_d,\alpha}\leq\frac{C}{\sqrt{N}}.\]
			Inequality \eqref{CLT-c-tight} follows from Corollary~\ref{CLT-Prop-eta-in} and Lemma~\ref{CLT-mart-W} combining with the fact that the mapping $g\mapsto \sup_{0\leq t\leq T}\|g_t\|_{k,\alpha}$ from $D(\R_+,\mathcal{W}_0^{-k,\alpha})$ to $\R_+$ is continuous at every point in $\mathcal{C}(\R_+,\mathcal{W}_0^{-k,\alpha})$.
		\end{proof}
		\begin{prop}\label{V-As-w} Under Assumption~\ref{V-As-1}, Assumption~\ref{V-As-2} and Assumption~\ref{VVM-CLT-ass-a_0},
			for any $\alpha>\frac{1}{2}$, The sequence $(W^N)_N$ converges in law in $\mathcal{C}(\R_+,\mathcal{W}_0^{-(m_d+1),\alpha})$ toward a centered continuous Gaussian process $W$ given by Definition~\ref{CLTM-def1}.
		\end{prop}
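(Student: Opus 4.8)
Following the approach of \cite{chevallier_fluctuations_2017}, the plan is to deduce the result by combining the tightness of $(W^N)_N$ already obtained in Proposition~\ref{CLT-tight-VVM} with the convergence of the finite-dimensional distributions, the limit being identified through a martingale central limit theorem. First I would note that Proposition~\ref{CLT-tight-VVM} gives tightness of $(W^N)_N$ in $D(\R_+,\mathcal{W}_0^{-m_d,\alpha})$, and since the injection $\mathcal{W}_0^{-m_d,\alpha}\hookrightarrow\mathcal{W}_0^{-(m_d+1),\alpha}$ is continuous, $(W^N)_N$ is also tight in $D(\R_+,\mathcal{W}_0^{-(m_d+1),\alpha})$. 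Moreover, exactly as in the proof of Proposition~\ref{CLT-mart-w}, the maximal jump of $W^N$ on any $[0,T]$ is bounded by $C/\sqrt N$, so every subsequential limit has continuous paths; hence it suffices to prove convergence of the laws in $D(\R_+,\mathcal{W}_0^{-(m_d+1),\alpha})$, and the upgrade to $\mathcal{C}(\R_+,\mathcal{W}_0^{-(m_d+1),\alpha})$ is then automatic, the limit $W$ having continuous paths (see \cite{billingsley1999convergence}).

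For the finite-dimensional distributions I would fix $p\geq1$, test functions $\varphi_1,\dots,\varphi_p\in\mathcal{C}^\infty_c(\R_+\times\Theta)$, and work with the $\R^p$-valued square-integrable martingale $t\mapsto(W^N_t(\varphi_1),\dots,W^N_t(\varphi_p))$. By polarizing \eqref{CLT-eq-W_q}, its predictable covariation is $\ll W^N(\varphi_i),W^N(\varphi_j)\gg_t=\int_0^t\langle\mu^N_s,\lambda\rangle\langle\mu^N_s,\wt R(\varphi_i,\varphi_j)\rangle\,ds$, where $\wt R(\varphi_i,\varphi_j)$ is the bounded continuous function of Definition~\ref{CLTM-def1} (using $\gamma\leq1$, $\int_\Theta K(\theta,\wt\theta)\nu(d\wt\theta)=1$, and that $\varphi_i,\varphi_j$ are bounded). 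By the FLLN, Theorem~\ref{VVM-th}, $\mu^N\Rightarrow\mu$ in $D(\R_+,\mathcal{P}(\R_+\times\Theta))$, and since the limit is deterministic, this covariation converges in probability, locally uniformly in $t$, to the deterministic continuous function $\int_0^t\langle\mu_s,\lambda\rangle\langle\mu_s,\wt R(\varphi_i,\varphi_j)\rangle\,ds$. Together with $\sup_{t\leq T}\max_i|\Delta W^N_t(\varphi_i)|\leq C/\sqrt N\to0$, the multidimensional martingale central limit theorem (e.g.\ \cite[Theorem~7.1.4]{ethier2009markov}) then yields that $(W^N_\cdot(\varphi_1),\dots,W^N_\cdot(\varphi_p))$ converges in law in $\mathcal{C}(\R_+,\R^p)$ to a centered Gaussian process with independent increments and covariance given by \eqref{CLT-cov-w}.

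Next I would pass from $\mathcal{C}^\infty_c$ to all of $\mathcal{W}_0^{m_d,\alpha}$. By Lemma~\ref{CLT-mart-W}, $\sup_N\E[\sup_{t\leq T}\|W^N_t\|_{-m_d,\alpha}^2]<\infty$, so the maps $\varphi\mapsto W^N_\cdot(\varphi)$ are bounded from $\mathcal{W}_0^{m_d,\alpha}$ into $L^2$ of càdlàg paths, uniformly in $N$; combined with the density of $\mathcal{C}^\infty_c$ in $\mathcal{W}_0^{m_d,\alpha}$, this extends the finite-dimensional convergence to arbitrary $\varphi_1,\dots,\varphi_p\in\mathcal{W}_0^{m_d,\alpha}$, with limiting covariance still given by \eqref{CLT-cov-w} (both sides being continuous in the test functions for $\|\cdot\|_{m_d,\alpha}$), i.e.\ the limit is the Gaussian process $W$ of Definition~\ref{CLTM-def1}. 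Since the evaluation maps $\ell\mapsto\langle\ell,\varphi\rangle$, $\varphi\in\mathcal{W}_0^{m_d,\alpha}$, separate points of the separable Hilbert space $\mathcal{W}_0^{-(m_d+1),\alpha}$ and determine Borel probability measures on it, the tightness of $(W^N)_N$ in $D(\R_+,\mathcal{W}_0^{-(m_d+1),\alpha})$ together with this finite-dimensional convergence forces $W^N\Rightarrow W$ there, and as $W$ is continuous (Proposition~\ref{CLT-mart-w}) the convergence holds in $\mathcal{C}(\R_+,\mathcal{W}_0^{-(m_d+1),\alpha})$.

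The hard part will be the convergence in probability of the predictable quadratic variation, which requires pushing the FLLN $\mu^N\Rightarrow\mu$ through the nonlinear functional $s\mapsto\langle\mu^N_s,\lambda\rangle\langle\mu^N_s,\wt R(\varphi,\psi)\rangle$. This is precisely why the argument is first carried out for bounded $\varphi,\psi\in\mathcal{C}^\infty_c$ — so that $\wt R(\varphi,\psi)$ is bounded continuous and the convergence is locally uniform — and only afterwards extended to the possibly $a^\alpha$-growing test functions of $\mathcal{W}_0^{m_d+1,\alpha}$ by density and the uniform second-moment estimate of Lemma~\ref{CLT-mart-W}. A minor additional point is checking that the coordinatewise Gaussian limits are consistent, so as to define a genuine $\mathcal{W}_0^{-(m_d+1),\alpha}$-valued Gaussian process, which follows from the bilinearity and boundedness of $(\varphi,\psi)\mapsto\int_0^{t\wedge t'}\langle\mu_s,\lambda\rangle\langle\mu_s,\wt R(\varphi,\psi)\rangle\,ds$ on $\mathcal{W}_0^{m_d,\alpha}$.
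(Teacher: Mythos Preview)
Your proposal is correct and rests on the same mathematical core as the paper's proof: tightness from Proposition~\ref{CLT-tight-VVM}, vanishing jumps from Proposition~\ref{CLT-mart-w}, and convergence of the predictable bracket $\ll W^N\gg_t(\varphi)$ to a deterministic limit via the FLLN (Theorem~\ref{VVM-th}) and the continuous mapping theorem, which forces the limit to be the Gaussian process of Definition~\ref{CLTM-def1}.

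The only organizational difference is that the paper does not split off a separate ``finite-dimensional distributions'' step nor invoke the Ethier--Kurtz martingale CLT explicitly. Instead it extracts a convergent subsequence $\widetilde W$, observes that $\widetilde W$ is a continuous square-integrable $\mathcal{W}_0^{-(m_d+1),\alpha}$-valued martingale (by passing the martingale property to the limit), and then shows directly that for each $\varphi\in\mathcal{W}_0^{m_d+1,\alpha}$ the bracket $\ll W^N\gg_t(\varphi)=H_\varphi(\mu^N)(t)$ converges to the deterministic $H_\varphi(\mu)(t)$ by checking that $H_\varphi$ is Lipschitz in total variation. A continuous martingale with deterministic Doob--Meyer process is Gaussian, so $\widetilde W=W$ and uniqueness of the subsequential limit gives full convergence. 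This avoids your intermediate reduction to $\mathcal{C}^\infty_c$ test functions and the subsequent density extension, since the embedding \eqref{Injection-w} already makes $R^{(2)}\varphi$ bounded for $\varphi\in\mathcal{W}_0^{m_d+1,\alpha}$. Your route is slightly longer but has the advantage of making the Gaussianity of the limit an immediate consequence of a named theorem rather than of the characterization of continuous martingales with deterministic bracket.
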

		\begin{proof}
			As $(W^N)_N$ is $\bC$-tight in $D(\R_+,\mathcal{W}_0^{-(m_d+1),\alpha}),$ we can extract a subsequence denoted again $(W^N)_N$ that converges to $\wt W$ in $\mathcal{C}(\R_+,\mathcal{W}_0^{-(m_d+1),\alpha})$. It is easy to check that $\wt W$ is martingale (see for example \cite[Proposition~$4.5.1$]{tran2006modeles}). From Proposition~\ref{CLT-mart-w}, $\wt W$ is a square integrable martingale such that for $\varphi\in\mathcal{W}_0^{m_d+1,\alpha},$
			\[\lim_{N\to\infty}\ll W^N \gg_t(\varphi)=\ll\wt W \gg_t(\varphi),\]
			where we recall that $\ll W^N \gg$ is given by the following expression in \eqref{CLT-eq-W_q}.
			\begin{equation*}
				\ll W^N\gg_t(\varphi)=\int_{0}^{t}\langle \mu_s^N,\lambda\rangle\langle \mu_s^N,R^{(2)}\varphi\rangle ds.
			\end{equation*}
			We set for $g\in D(\R_+;\mathcal{P}(\R_+\times\Theta)),$
			\[H_\varphi(g)(t)=\int_{0}^{t}\langle g_s,\lambda\rangle\langle g_s,R^{(2)}\varphi\rangle ds.\]
			Noting that, for $\alpha>\frac{1}{2},$ from \eqref{Injection-w}, $\mathcal{C}^{m_d+1}_b\subset \mathcal{W}^{m_d+1,\alpha}_0.$ Consequently, from expression of $R^{(2)}\varphi$ in \eqref{CLT-eq-W_q} and Assumption~\ref{V-As-1}, it follows that, there exists $C>0,$ for all $(a,\theta)\in\R_+\times\Theta,$
			\begin{align*}
				|R^{(2)}\varphi(a,\theta)|&\leq\int_{\Theta}\left(\varphi(0,\wt\theta)-\varphi(a,\theta)\right)^2K(\theta,\wt\theta)\nu(d\wt\theta)\\
				&\leq2\|\varphi\|_{\mathcal{C}^{m_d+1}_b}\leq2C^2\|\varphi\|_{m_d+1,\alpha}^2
			\end{align*}  
			where the last line follows from Assumption~\ref{V-As-2}.
			
			As a result, it is easy to check that,
			\[\sup_{0\leq t\leq T}|H_\varphi(g^1)(t)-H_\varphi(g^2)(t)|\leq 4TC^2\lambda_*\|\varphi\|_{m_d+1,\alpha}^2\sup_{0\leq t\leq T}\|g^1_t-g^2_t\|_{TV}.\]
			So from continuous mapping theorem and Theorem~\ref{VVM-th} it follows that $H_\varphi(\mu^N)\to H_\varphi(\mu)$ as $N\to\infty$. So $\wt W$ is a continuous square integrable martingale with a deterministic Doob-Meyer process, so it is characterised as the Gaussian process with covariance given by \eqref{CLT-cov-w} and the uniqueness follows. 
		\end{proof}
		\begin{prop}\label{CLT-prop-existence}
			Under Assumption~\ref{V-As-1}, Assumption~\ref{V-As-2} and Assumption~\ref{VVM-CLT-ass-a_0}, Given $W$ a centered Gaussian process given by Definition~\ref{CLTM-def1}, the following equation, for $\alpha>\frac{1}{2}$ and any $\varphi\in \mathcal{W}^{m_d+2,\alpha}_0$
			\begin{equation}\label{CLT-expr-d-limit-1}
				\langle \hat{\mu}_t,\varphi\rangle=\langle \hat{\mu}_0,\varphi\rangle+\int_{0}^{t}\langle \hat{\mu}_s,L(\varphi)\rangle ds+\int_{0}^{t}\left(\langle \hat{\mu}_s,\lambda\rangle\langle \mu_s,R\varphi\rangle+\langle \mu_s,\lambda\rangle \langle \hat{\mu}_s,R\varphi\rangle\right)ds + W_t(\varphi),	\end{equation}
			has at most one solution $\hat{\mu}\in \mathcal{C}(\R_+,\mathcal{W}_0^{-(m_d+1),\alpha})$. 				 
		\end{prop}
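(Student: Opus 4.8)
The plan is a standard linear-uniqueness argument powered by the method of characteristics, exploiting that \eqref{CLT-expr-d-limit-1} is affine in $\hat\mu$ with the noise $W$ and the datum $\hat\mu_0$ fixed. First I would take two solutions $\hat\mu^1,\hat\mu^2\in\mathcal{C}(\R_+,\mathcal{W}_0^{-(m_d+1),\alpha})$ and set $\eta:=\hat\mu^1-\hat\mu^2$; the $W_t(\varphi)$ and $\langle\hat\mu_0,\varphi\rangle$ terms cancel, so $\eta_0=0$ and $\eta$ solves the homogeneous equation
\begin{equation*}
\langle\eta_t,\varphi\rangle=\int_0^t\langle\eta_s,\partial_a\varphi\rangle\,ds+\int_0^t\left(\langle\eta_s,\lambda\rangle\langle\mu_s,R\varphi\rangle+\langle\mu_s,\lambda\rangle\langle\eta_s,R\varphi\rangle\right)ds
\end{equation*}
for every $\varphi\in\mathcal{W}_0^{m_d+2,\alpha}$. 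Everything then reduces to showing $\eta\equiv0$.

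Next I would upgrade this weak formulation to time-dependent test functions: for any $\mathcal{C}^1$ family $s\mapsto\varphi_s\in\mathcal{W}_0^{m_d+2,\alpha}$ with $\partial_s\varphi_s+\partial_a\varphi_s\in\mathcal{W}_0^{m_d+1,\alpha}$, one has
\begin{equation*}
\langle\eta_t,\varphi_t\rangle=\int_0^t\langle\eta_s,\partial_s\varphi_s+\partial_a\varphi_s\rangle\,ds+\int_0^t\left(\langle\eta_s,\lambda\rangle\langle\mu_s,R\varphi_s\rangle+\langle\mu_s,\lambda\rangle\langle\eta_s,R\varphi_s\rangle\right)ds,
\end{equation*}
which follows from the usual telescoping/Riemann-sum argument over a partition of $[0,t]$, freezing the test function on each subinterval, using the $\mathcal{C}^1$-dependence in $s$ and the strong continuity of $s\mapsto\eta_s$ in $\mathcal{W}_0^{-(m_d+1),\alpha}$, then letting the mesh vanish. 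Then, fixing $T>0$, $t\in[0,T]$ and $g\in\mathcal{W}_0^{m_d+2,\alpha}$, I would insert $\varphi_s(a,\theta):=g(a+t-s,\theta)$, $s\in[0,t]$, which satisfies $\partial_s\varphi_s+\partial_a\varphi_s=0$ and, for each $s$, still lies in $\mathcal{W}_0^{m_d+2,\alpha}$ because the shift $a\mapsto a+(t-s)$ is bounded on the weighted Sobolev spaces uniformly for $t-s\in[0,T]$ (this uses only $\sup_{a\geq0}(1+a^{2\alpha})/(1+(a-r)^{2\alpha})\leq C_T$ for $r\in[0,T]$). This kills the transport term and leaves $\langle\eta_t,g\rangle=\int_0^t(\langle\eta_s,\lambda\rangle\langle\mu_s,R\varphi_s\rangle+\langle\mu_s,\lambda\rangle\langle\eta_s,R\varphi_s\rangle)\,ds$.

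Now I would estimate. Since $R\varphi_s$ involves $g$ only through pointwise evaluations at $(0,\wt\theta)$ and $(a+t-s,\theta)$, $R$ costs no derivative; combining $0\leq\gamma\leq1$, Assumption~\ref{V-As-2}, Assumption~\ref{VVM-CLT-ass-a_0}(2), the regularity of $\gamma,K,\lambda$, and the shift bound gives $\|R\varphi_s\|_{m_d+1,\alpha}\leq C_T\|g\|_{m_d+1,\alpha}$, and similarly $|\langle\mu_s,R\varphi_s\rangle|\leq C_T\|g\|_{m_d+1,\alpha}$ (here $\mu_s$ is a probability measure with $\int(1+a^{\alpha})\mu_s(da,d\theta)\leq1+\E[(a_0+T)^{\alpha}]<\infty$ by Assumption~\ref{VVM-CLT-ass-a_0}(1), which controls the weight against which $R\varphi_s$ is integrated). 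Together with $|\langle\mu_s,\lambda\rangle|\leq\lambda_*$ and $|\langle\eta_s,\lambda\rangle|\leq C\,\|\eta_s\|_{-(m_d+1),\alpha}$, this yields
\begin{equation*}
|\langle\eta_t,g\rangle|\leq C_T\,\|g\|_{m_d+1,\alpha}\int_0^t\|\eta_s\|_{-(m_d+1),\alpha}\,ds\qquad\text{for all }g\in\mathcal{W}_0^{m_d+2,\alpha}.
\end{equation*}
Because the right-hand side depends on $g$ only through $\|g\|_{m_d+1,\alpha}$ and $\mathcal{W}_0^{m_d+2,\alpha}$ is dense in $\mathcal{W}_0^{m_d+1,\alpha}$, the inequality persists for all $g\in\mathcal{W}_0^{m_d+1,\alpha}$; taking the supremum over $\|g\|_{m_d+1,\alpha}\leq1$ gives $\|\eta_t\|_{-(m_d+1),\alpha}\leq C_T\int_0^t\|\eta_s\|_{-(m_d+1),\alpha}\,ds$ on $[0,T]$, and since $s\mapsto\|\eta_s\|_{-(m_d+1),\alpha}$ is continuous, Gronwall's lemma forces $\eta\equiv0$ on $[0,T]$, hence on $\R_+$.

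I expect the main obstacle to be the bookkeeping of the Sobolev indices: one spare derivative ($m_d+2$ versus $m_d+1$) is needed precisely to absorb the transport term, and the payoff of the characteristic test functions is that $R$ then acts without any further loss of regularity, which is exactly what lets one close the Gronwall estimate back in the original $\mathcal{W}_0^{-(m_d+1),\alpha}$ norm after the density step. The remaining ingredients — the telescoping upgrade to time-dependent test functions, the uniform-in-$[0,T]$ boundedness of shifts and of $R$ on the weighted scale, and the moment control of $\mu_s$ — are routine under Assumption~\ref{VVM-CLT-ass-a_0}.
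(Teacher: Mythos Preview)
Your proposal is correct and follows essentially the same route as the paper: subtract two solutions, use the characteristic test functions $\varphi_s(a,\theta)=g(a+t-s,\theta)$ to kill the transport term, bound the remaining $R$-terms, and close by Gronwall. The paper invokes the time-dependent formulation \eqref{CLT-exp-VVM} directly, while you spell out the telescoping upgrade from \eqref{CLT-expr-d-limit-1}; and the paper closes its Gronwall in the weaker norm $\|\cdot\|_{-(m_d+2),\alpha}$ (testing against $\varphi\in\mathcal{W}_0^{m_d+2,\alpha}$), whereas you go one step further via density of $\mathcal{W}_0^{m_d+2,\alpha}$ in $\mathcal{W}_0^{m_d+1,\alpha}$ to close in $\|\cdot\|_{-(m_d+1),\alpha}$---a cosmetic refinement since both yield $\eta\equiv0$.
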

		\begin{proof} For $\varphi\in\mathcal{W}_0^{m_d+2,\alpha},$ 
			We define
			\begin{equation*}
				\forall t\in\R_+,\forall s\in[0,t],\forall (a,\theta)\in\R_+\times\Theta,\quad f(s,a,\theta)=\varphi(a-(s-t),\theta).
			\end{equation*}
			It is easy to check that, $f$ is the unique solution of the following parametric transport equation
			\begin{equation*}
				\left\{\begin{aligned}
					&\partial_s f_s(a,\theta)+\partial_a f_s(a,\theta)=0\quad\forall s\in[0,t]\\
					&f(t,a,\theta)=\varphi(a,\theta).
				\end{aligned}\right.
			\end{equation*} 
			Therefore from \eqref{CLT-exp-VVM},
			\begin{equation*}
				\langle \hat{\mu}_t,\varphi\rangle=\langle \hat{\mu}_0,\varphi_t\rangle+\int_{0}^{t}\left(\langle \hat{\mu}_s,\lambda\rangle\langle \mu_s,R\varphi_{t-s}\rangle+\langle \mu_s,\lambda\rangle \langle \hat{\mu}_s,R\varphi_{t-s}\rangle\right)ds + W_t(\varphi),	
				\end{equation*}
				where $\varphi_s(a,\theta)=\varphi( a+s,\theta)$.
				
				Let $\hat{\mu}^1$ and $\hat{\mu}^1$ two solutions of equation~\eqref{CLT-exp-VVM} in $\mathcal{C}(\R_+,\mathcal{W}_0^{-(m_d+1),\alpha})$. Consequently,
				\begin{equation}\label{CLT-expr-d-limit-1-p1}
					\langle \hat{\mu}_t^1-\hat{\mu}_t^2,\varphi\rangle=\int_{0}^{t}\left(\langle \hat{\mu}_s^1-\hat{\mu}_s^2,\lambda\rangle\langle \mu_s,R\varphi_{t-s}\rangle+\langle \mu_s,\lambda\rangle \langle \hat{\mu}_s^1-\hat{\mu}_s^2,R\varphi_{t-s}\rangle\right)ds,	
				\end{equation}
			We recall that  
			\begin{equation*}
				R\varphi(a,\theta)=\int_{\Theta}\left(\varphi(0,\widetilde\theta)-\varphi(a,\theta)\right)\gamma( a,\theta)K(\theta,\widetilde\theta)\dP(\rd\widetilde\theta).
			\end{equation*}
			Note that as $\alpha>\frac{1}{2}$, from \eqref{Injection-w}, $\mathcal{C}^{m_d+2}_b\subset \mathcal{W}^{m_d+2,\alpha}_0.$
			For $\varphi\in\mathcal{C}^{m_d+2}_b,$ as $\gamma\leq1$ (Assumption~\ref{V-As-1}), we have for all $(a,\theta)\in\R_+\times\Theta,$ 
			\begin{align*}
				|R\varphi_{t-s}(a,\theta)|&\leq\int_{\Theta}\left|\varphi_{t-s}(0,\widetilde\theta)-\varphi_{t-s}(a,\theta)\right|\gamma( a,\theta)K(\theta,\widetilde\theta)\dP(\rd\widetilde\theta)\\
				&\leq2\|\varphi\|_{\mathcal{C}^{m_d+2}_b},
			\end{align*}
			where the last line follows from Assumption~\ref{V-As-2}.
			
			As a result, as $\alpha>\frac{1}{2}$, from \eqref{Injection-w}, there exists $C>0$ such that,
			 \begin{equation}\label{eq-inw}
			 	\|R\varphi_{t-s}\|_{m_d+2,\alpha}\leq2C\|\varphi\|_{m_d+2,\alpha}\text{ and }\|\lambda R\varphi_{t-s}\|_{m_d+2,\alpha}\leq2C\lambda_*\|\varphi\|_{m_d+2,\alpha}.
			 \end{equation}
			Note that as $\mathcal{C}^{m_d+2}_b$ is dense in $\mathcal{W}^{m_d+2,\alpha}_0,$ inequality \eqref{eq-inw} holds on $\mathcal{W}^{m_d+2,\alpha}_0.$
			
			Therefore from \eqref{CLT-expr-d-limit-1-p1} and the fact that $\langle u_s,\lambda\rangle\leq\lambda_*$, it follows that,
			\begin{equation*}
				\|\hat{\mu}^1_t-\hat{\mu}^2_t\|_{-(m_d+2),\alpha}\leq 4C\lambda_*\int_{0}^{t}\|\hat{\mu}^1_s-\hat{\mu}^2_s\|_{-(m_d+2),\alpha}ds.
			\end{equation*}
			Then the result follows from Gronwall's Lemma.
		\end{proof}
		\begin{remark}
			Note that $\partial_a\varphi$ appearing in \eqref{CLT-expr-C} and \eqref{CLT-expr-d-limit-1} reduces the regularity of the test functions by $1$. So if we consider $\varphi\in\mathcal{W}_0^{m_d+1,\alpha}$ we mut consider $\hat{\mu}^N_t\in \mathcal{W}_0^{-m_d,\alpha}$ when dealing with this term $\int_{0}^{t}\langle \hat{\mu}_s^N,\partial_a\varphi\rangle ds$. Moreover $\hat{\mu}^N_t$ is not tight in $ \mathcal{W}_0^{-m_d,\alpha}$, so we will consider $\varphi\in\mathcal{W}_0^{m_d+2,\alpha}$.
		\end{remark}
		\subsection{Proof of Theorem~\ref{CLT-formulation-VVM}}\label{CLT-Main-r}
			From Proposition~\ref{CLT-tight-VVM}, $(\hat{\mu}^N)_N$ is $\bC$-tight in $D(\R_+,\mathcal{W}_0^{-(m_d+1),\alpha}),$ hence we can extract a subsequence denoted again $(\hat{\mu}^N)_N$ that converges to $\wt \mu$ in $\mathcal{C}(\R_+,\mathcal{W}_0^{-(m_d+1),\alpha})$. Moreover as $\hat{\mu}^N$ satisfies \eqref{CLT-expr-C} from Proposition~\ref{V-As-w} and continuous mapping theorem's it follows that $\wt\mu$ satisfies \eqref{CLT-expr-d-limit-1}. Uniqueness follows from Proposition~\ref{CLT-prop-existence}. This concludes the proof.

       \bibliographystyle{plain}
       \bibliography{biblio}

		\appendix
		\section{}\label{App}
		\subsection{Definition of tightness}
		\begin{definition}\label{Def-tight}Let $H$ be a separable Hilbert space.
			A sequence of stochastic process $(X^N)_N\in D(\R_+,H)$ is tight if the following conditions hold:
			\begin{itemize}
				\item For all $t\geq0$ and $\epsilon>0$ there exists a compact $\mathcal{K}$ such that,   
				\begin{equation}\label{Ald-1}
					\sup_N\mathbb{P}(X^N_t\notin\mathcal{K})\leq\epsilon.
				\end{equation}
				\item for all $\epsilon_1>0,\,\epsilon_2>0,$ and $T>0,$ there exist $\delta'>0$ and an integer $N_0$ such that for all stopping time $\tau_N\leq T,$
				\begin{equation}
					\sup_{N\geq N_0}\sup_{\delta\leq\delta'}\mathbb{P}\left(\|X^N_{\tau_N+\delta}-X^N_{\tau_N}\|_H\geq\epsilon_1\right)\leq\epsilon_2.
				\end{equation}
			\end{itemize}  
		\end{definition}
		Note that the following condition implies condition~\eqref{Ald-1}: There exists a Hilbert space $H_0$ such that $H_0\hookrightarrow_K H$ and for all $t\geq0,$
		\begin{equation*}
			\sup_N\E\left[\|X^N_t\|_{H_0}^2\right]<\infty,
		\end{equation*} 
		where the notation $\hookrightarrow_K$ means that the injection is compact.
\subsection{Explicit expression of the random variable $(\mathfrak{M}_{0,1},\mathfrak{M}_{0,2},\mathfrak{M}_1,\mathfrak{M}_2)$}\label{A-sec-N}
To obtain their expressions, it suffices to compute $\langle \hat{u}_t,\varphi\rangle$. More precisely, by replacing $\hat{u}_t$ by the expression \eqref{VVL-eq-CLT-2'} in $\langle \hat{u}_t,\varphi\rangle$, we easily obtain the following expression,
		\begin{enumerate}
			\item $\mathfrak{M}_{0,1}(\varphi)(t):=\int_{\R_+\times\Theta}\varphi(a+t,\theta)\hat{u}_0(a,\theta)\exp\left(-\int_{0}^{t}\fF(s)\gamma(a+s,\theta)\rd s\right)da\dP(d\theta)$
			\item 
			\begin{multline*}
				\mathfrak{M}_{0,2}(\varphi)(t):=\int_{\R_+\times\Theta}\varphi(a+t,\theta)\int_0^t \sqrt{\fF(s)\gamma(a+s,\theta)u_0(a,\theta)}\zeta_s(a+s,\theta)\\\times\exp\left(-\int_{s}^{t}\fF(r)\gamma(r+a,\theta)\rd r-\frac{1}{2}\int_{0}^{s}\fF(r)\gamma(r+a,\theta)\rd r\right)ds da\dP(d\theta)
			\end{multline*}			
			\item \begin{multline*}
				\mathfrak{M}_{1}(\varphi)(t):=\int_{\Theta}\int_{0}^t\varphi(t-s,\theta)\int_{\R_+\times\Theta}\sqrt{\fF(s)\gamma(a,\wt\theta)u_s(a,\wt\theta)}\zeta_s(a,\wt\theta)K(\widetilde\theta,\theta)\rd a\dP(\rd\widetilde{\theta})\\\exp\left(-\int_{s}^{t}\fF(r)\gamma(r-s,\theta)\rd r\right)ds\dP(d\theta)
			\end{multline*}
			\item \begin{multline*}
				\mathfrak{M}_{2}(\varphi)(t):=\int_{\Theta}\int_{0}^t\varphi(t-a,\theta)\int_a^{t} \sqrt{\fF(s)\fF(a)\fS(a,\theta)\gamma(s-a,\theta)}\zeta_{s}(s-a,\theta)\\\exp\left(-\int_{s}^{t}\fF(r)\gamma(r-a,\theta)\rd r-\frac{1}{2}\int_{a}^{s}\fF(r)\gamma(r-a,\theta)\rd r\right)ds da\dP(d\theta).
			\end{multline*}
		\end{enumerate}

	
\end{document}